\makeatletter\@addtoreset{equation}{section}\makeatother
\makeatletter\@addtoreset{figure}{section}\makeatother
\makeatletter\@addtoreset{table}{section}\makeatother
\newtheorem{theorem}{Theorem}[section]
\newtheorem{prop}[theorem]{Proposition}
\newtheorem{lemma}[theorem]{Lemma}
\newtheorem{conj}[theorem]{Conjecture}
\newtheorem{cor}[theorem]{Corollary}
\newcommand{\R}{{\mathbb R}}
\newcommand{\C}{{\mathbb C}}
\newcommand{\Z}{{\mathbb Z}}
\newcommand{\N}{{\mathbb N}}
\newcommand{\norm}[1]{\left\|#1\right\|}
\newcommand{\abs}[1]{\left|#1\right|}
\newcommand{\op}[1]{\!\!\mathop{\rm ~#1}\nolimits}
\newcommand{\DD}{\!\mathop{\rm d\!}\nolimits}
\newcommand{\scriptop}[1]{\!\!\mathop{\mbox{\rm \scriptsize ~#1}}\nolimits}
\newcommand{\deriv}[2]{\frac{\partial #1}{\partial #2}}
\newcommand{\ii}{\textup{i}}
\newenvironment{remark}{\refstepcounter{theorem}\par\medskip\noindent{\bf
Remark~\thetheorem~~}}{\unskip\nobreak\hfill\hbox{ $\oslash$}\par\bigskip}
\newenvironment{definition}{\refstepcounter{theorem}\par\medskip\noindent{\bf
Definition~\thetheorem~~}}{\unskip\nobreak\hfill\hbox{ $\oslash$}\par\bigskip}
\title{ Hamiltonian dynamics and spectral theory for
  spin\--oscillators} 
\author{\'Alvaro Pelayo\thanks{Partially supported by an NSF
    Postdoctoral Fellowship.} \,\, and San V\~u Ng\d
  oc\thanks{Partially supported by an ANR 'Programme Blanc'.}}
\date{}
\begin{document}
\maketitle

\begin{abstract}

  We study the Hamiltonian dynamics and spectral theory of
  spin\--oscillators.  Because of their rich structure,
  spin\--oscillators display fairly general properties of integrable
  systems with two degrees of freedom.  Spin\--oscillators have
  infinitely many transversally elliptic singularities, exactly one
  elliptic-elliptic singularity and one focus\--focus singularity.
  The most interesting dynamical features of integrable systems, and
  in particular of spin\--oscillators, are encoded in their
  singularities.  In the first part of the paper we study the
  symplectic dynamics around the focus\--focus singularity.  In the
  second part of the paper we quantize the coupled spin\--oscillators
  systems and study their spectral theory.  The paper combines
  techniques from semiclassical analysis with differential geometric
  methods.
  \end{abstract}


\section{Introduction}

Coupled spin\--oscillators are $4$\--dimensional integrable
Hamiltonian systems with two degrees of freedom constructed by
``coupling'' the classical spin on the $2$\--sphere $S^2$ (see Figure
\ref{fig:spin}) with the classical harmonic oscillator on the
Euclidean plane $\mathbb{R}^2$.  Coupled spin\--oscillators are one of
the most fundamental examples of integrable systems; their dynamical
behavior is rich and represents some fairly general properties of low
dimensional integrable systems. The goal of this paper is to study
coupled spin\--oscillators from the point of view of classical and
quantum mechanics, using methods from classical and semiclassical
analysis.

A $4$\--dimensional integrable system with two degrees of freedom
consists of a connected symplectic $4$\--manifold equipped with two
almost everywhere linearly independent smooth functions which Poisson
commute, i.e. two smooth functions on the manifold such that one of
them is invariant along the flow of the Hamiltonian vector field
generated by of the other.  The most interesting geometric and
dynamical features of integrable systems are encoded in their
singularities, i.e the points where Hamiltonian vector fields
generated by the functions are linearly dependent.  Around the regular
points, the dynamics is simple, and described by the
Arnold\--Liouville\--Mineur action\--angle theorem. As we will see,
the dynamics near the singularities is in general much more
complicated and depends heavily on the type of singularity.

Let us explain the construction of coupled spin\--oscillators more precisely.  
Let $S^2$ be the unit sphere in $\R^3$ with coordinates $(x,\,y,\,z)$,
and let $\R^2$ be equipped with coordinates $(u,\, v)$. Let $\lambda, \rho>0$
be positive constants. Let $M$ be
the product manifold $S^2\times\R^2$ equipped with the product
symplectic structure $\lambda \omega_{S^2} \oplus \rho \omega_0$.  Let $J,\,H
\colon M \to \R$ be the smooth maps defined by $ J := \rho (u^2+v^2)/2 + \lambda z
$ and $H := \frac{1}{2} \, (ux+vy)$. 
A \emph{coupled spin\--oscillator} is
a $4$\--dimensional integrable system of the form $(M,\, \lambda \omega_{S^2} \oplus
\rho \omega_0,\, (J,\,H))$, where $\omega_{S^2}$ is the standard
symplectic form on the sphere and $\omega_0$ is the standard
symplectic form on $\R^2$.

The singularities of coupled spin\--oscillators are non\--degenerate
and of elliptic\--elliptic, transversally\--elliptic (both of these
types are usually referred to as ``elliptic singularities'') or
focus\--focus type.  They have infinitely many
transversally\--elliptic singularities (along a piecewise smooth
curve, as we shall see), one elliptic\--elliptic singularity at
$(0,0,-1,0,0)$ and one singularity of focus\--focus type at
$(0,0,1,0,0)$.  The $J$ component of this system is the Hamiltonian
(or momentum map) of the $S^1$\--action that simultaneously rotates
about the vertical axes of the $2$\--sphere, and about the origin of
$\mathbb{R}^2$.  The $H$ component is given as follows. Using the
natural embedding of $S^2$ in $\mathbb{R}^3$, let $\pi_z$ be the
orthogonal projection from $S^2$ onto $\mathbb{R}^2$ viewed as the
$z=0$ hyperplane. Let $(x,\,y,\,z) \in S^2$ and $(u,\, v) \in
\mathbb{R}^2$. Under the flow of $J$ the points $(x,\,y,\,z)$ and
$(u,\,v)$ are moving along the flows of $z$ and $(u^2+v^2)/2$,
respectively, with the same angular velocity.  Hence the inner product
$\langle \pi_z(x,\,y,\,z),\, (u,\,v) \rangle=ux+vy=2H$ is constant and
commutes with $J$.

Because $H$ does not come from an $S^1$\--action, coupled
spin\--oscillators are not toric integrable systems -- they are what now is called \emph{semitoric
integrable systems}, or simply \emph{semitoric systems}.  Semitoric systems form
a rich class of integrable systems, commonly found in simple
physical models.  For simplicity, throughout this paper we assume 
the rescaling $\lambda=\rho=1$. The statements and proofs extend immediately to the case
of $\lambda,\, \rho>0$, but we feel that the notation is already
sufficiently heavy so we shall avoid carrying these parameters.

\subsection*{Semitoric integrable systems}

Our interest in semitoric integrable systems was motivated by the
remarkable convexity results for Hamiltonian torus actions by Atiyah
\cite{atiyah-convex}, Guillemin\--Sternberg
\cite{guillemin-sternberg}, and Delzant \cite{delzant}. Despite
important contributions by Arnold, Duistermaat \cite{duistermaat},
Eliasson \cite{eliasson-these}, V\~u Ng\d oc \cite{san-semi-global,
  san-polytope}, Zung \cite{zung-I} and many others, the singularity
theory of integrable systems from the point of view of symplectic
geometry is far from being completely understood.  As a matter of
fact, very few integrable systems are understood. The singularities 
of these systems encode a vast amount of information
about the symplectic dynamics and geometry of the system, much of which is
not computable with the current methods. 
 
This singularity theory is interesting not only from the point of view
of semiclassical analysis and symplectic geometry, but it also shares
many common features with the study of singularities in the context of
symplectic topology \cite{symington-four, leung-symington}, algebraic
geometry and mirror symmetry (see \cite{gross-siebert1} and the
references therein).

The coupled spin\--oscillator is perhaps the simplest non\--compact
example of an integrable system of semitoric type. Precisely, a {\em
  semitoric integrable system} on $M$ is an integrable system $J, \, H
\in \op{C}^{\infty}(M,\, \R)$ for which the component $J$ is a proper
momentum map for a Hamiltonian circle action on $M$ and the map
$F:=(J,\,H):M\to\R^2$ has only non\--degenerate singularities in the
sense of Williamson \cite{williamson}, without real-hyperbolic
blocks. This means that in addition to the well\--known elliptic
singularities of toric systems, semitoric systems may have
\emph{focus\--focus singularities}.
  
Semitoric integrable systems on $4$\--manifolds have been symplectically 
classified by the authors in \cite{san-alvaro-I, san-alvaro-II} in terms a
collection of five invariants.  While conceptually they are more easily describable,
some of these invariants are involved to compute explicitly for a particular
integrable system.  The most difficult invariant to
compute is the so called Taylor series invariant, which classifies a
neighborhood of the \emph{focus\--focus singular fiber} of
$F$.  This invariant, which was introduced in \cite{san-semi-global},
encodes a large amount of information about the
local and semiglobal behavior of the system. 
Focus\--focus singular fibers are singular fibers that contain
some fixed point $m$ (i.e. $\op{rank}(\op{d}\! F)=0$) which is of
\emph{focus\--focus} type, meaning that there are symplectic coordinates
locally near $m$ in which $m=(0,0,0,0)$, $\omega=\op{d}\!  \xi \wedge
\op{d}\!x +\op{d}\! \eta\wedge \op{d}\!y$ and $F=F(m)+(x\xi+y\eta, \,
x\eta-y\xi) +\mathcal{O}((x,\, \xi,\, y, \,\eta)^3)$.

\subsection*{Dynamics and singularities of coupled spin\--oscillators}

The coupled spin\--oscillator system has non\--degenerate
singularities of elliptic\--elliptic, transversally\--elliptic and
focus\--focus type. It has exactly one singularity of focus\--focus
type.  Near the focus\--focus singularity, the behavior of the
Hamiltonian vector fields generated by the system is not
$2\pi$\--periodic, as it occurs with toric systems.

\begin{figure}[h]
  \begin{center}
    \includegraphics[width=7cm]{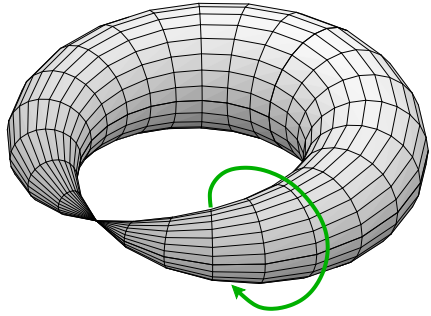}
    \caption{Singularity of focus\--focus type and vanishing cycle.
      Topologically a fiber containing a single focus\--focus
      singularity is a pinched torus.}
    \label{fig:pinched}
  \end{center}
\end{figure}

Loosely speaking, one of the components of the system is indeed
$2\pi$\--periodic, but the other one generates an arbitrary flow which
turns indefinitely around the focus\--focus singularity and which, as
$F$ tends to the critical value $F(m)$, deviates from periodic
behavior in a logarithmic fashion, up to a certain error term; this
deviation from being logarithmic is a symplectic invariant and can be
made explicit -- it is in fact given by an infinite Taylor series
$(S)^{\infty}$ on two variables $X,Y$ with vanishing constant term.
This was proven by the second author in \cite{san-semi-global}.  The
goal of the first part of the present paper is compute the linear
approximation of this deviation.

\begin{theorem} \label{main} The coupled spin--oscillator is a
  semitoric integrable system, with one single focus\--focus
  singularity at $m=(0,\,0,\,1,\,0,\,0) \in S^2 \times \R^2$.  The
  semiglobal dynamics around $m$ may be described as follows: the
  linear deviation from exhibiting logarithmic behavior in a saturated
  neighborhood of $m$ is given by the linear map $L \colon \R^2 \to
  \R$ with expression $L(X,\, Y)=\frac{\pi}{2} \, X+ 5\ln 2\, Y.$ In
  other words, we have an equality $(S(X,\,Y))^{\infty}=L(X,\,Y)
  +\mathcal{O}(X,Y)^2,$ where $(S(X,\,Y))^{\infty}$ denotes the Taylor
  series invariant at the focus\--focus singularity.
\end{theorem}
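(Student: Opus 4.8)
The strategy is to reduce the computation of the Taylor-series invariant $(S)^\infty$ to an explicit analysis of the focus\--focus singular fiber and the $S^1$\--action near it, following the prescription in \cite{san-semi-global}. Recall that the invariant $(S)^\infty$ is defined through the regularized period of the (non\--periodic) Hamiltonian flow of the second component as the regular fiber approaches the critical fiber $F(m)$. Concretely, near the focus\--focus value one writes the action integral (or the conjugate ``time'' integral) $\sigma(w)$, for $w = A + \ii B$ a complex parametrization of a small punctured disk of regular values around $F(m)=0$, and extracts the multivalued, logarithmically singular part: $\sigma_2(w) = \Imag\big(\sigma(w)\big) + \text{(smooth)}$, with $\sigma_1(w) - \Rl\big(w \ln w - w\big) = S(w) + \text{const}$. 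The linear part $L(X,Y)$ of $S$ is then read off from the first-order Taylor expansion of this regularized action at $w=0$. So the real work is: (i) find good symplectic ``normal form'' coordinates near $m$; (ii) compute the relevant period/action integral along a vanishing cycle; (iii) expand to first order.

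First I would set up the local model: at $m=(0,0,1,0,0)$ introduce coordinates on $S^2$ near the north pole (e.g. $z = 1 - (x^2+y^2)/2 + \cdots$ via the standard chart, or better, use the complex coordinate trivializing $\omega_{S^2}$) and keep $(u,v)$ on $\R^2$. In these coordinates $J = (u^2+v^2)/2 + z$ generates the diagonal rotation, which is genuinely $2\pi$\--periodic, so $J$ already is the globally defined momentum map; the focus\--focus nature of $m$ was asserted in the excerpt. The second action is obtained by integrating a primitive of the canonical $1$\--form over a cycle on the fiber $F^{-1}(w)$ that is transverse to the $S^1$\--orbit. The key classical device, due to V\~u Ng\d oc, is that $\sigma(w) = \int_{\gamma_w}\alpha$ where $\gamma_w$ runs from a point to its image under a carefully chosen ``complexified time'' map; equivalently one computes $\partial\sigma/\partial w$ as a residue-type integral and integrates. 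I would parametrize the fiber by solving $J=A$, $H=B$ for, say, $(z, \text{angle})$ in terms of $(u^2+v^2)$, and reduce the period integral to a one\--variable integral over the allowed range of $r^2 := u^2+v^2$, whose endpoints are the roots of the resulting quadratic/cubic coming from $H^2 = (ux+vy)^2/4$ combined with $x^2+y^2 = 1-z^2$ and $z = A - r^2/2$.

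The heart of the computation is step (ii)–(iii): evaluating $\frac{\pi}{2}$ and $5\ln 2$ as the coefficients. I expect the period integral to take the shape $\int \frac{P(r^2)\,dr^2}{\sqrt{Q(r^2)}}$ with $Q$ a low\--degree polynomial whose coefficients depend on $(A,B)$, and whose discriminant vanishes as $(A,B)\to 0$ — this vanishing is exactly the source of the logarithm. One then splits the integral into a logarithmically divergent part, which one matches against $\Rl(w\ln w - w)$ to fix normalization, and a convergent remainder whose value at $w=0$ gives the constant (discarded) and whose derivative at $w=0$ gives $L$. I would carry this out by expanding $Q$ in $(A,B)$, isolating the elliptic\--integral pieces, and using standard evaluations (e.g. of $\int_0^1 \frac{dt}{\sqrt{t(1-t)}} = \pi$ and $\int_0^1 \frac{\ln t\,dt}{\sqrt{t(1-t)}}$-type integrals, which produce the $\ln 2$) to extract the two numbers. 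A sanity check is that $L$ must be independent of the choices (chart, primitive, cycle orientation) up to the normalization built into the definition of $(S)^\infty$; in particular the coefficient of $Y$ being $5\ln 2$ rather than $\ln 2$ should come out of the multiplicities/degree of $Q$, so I would double\--check it by an independent asymptotic expansion of the action.

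The main obstacle I anticipate is step (ii): organizing the reduction of the $2$\--dimensional fiber integral to a clean $1$\--dimensional integral with explicitly computable divergent and finite parts, and correctly tracking the normalization conventions so that the extracted linear map genuinely equals the invariant as defined in \cite{san-semi-global} (signs, factors of $2\pi$, and the choice of branch of the logarithm are all easy to get wrong). Once the integral is in the form $\int P/\sqrt{Q}$ with $Q$ explicit, the extraction of $\pi/2$ and $5\ln 2$ is a finite, if delicate, computation; but getting to that form — and being sure no contribution from the oscillator ``at infinity'' or from the other branch of the fiber has been dropped — is where the real care is needed.
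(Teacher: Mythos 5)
Your plan is a genuinely different route from the paper's. You propose to compute the regularized period directly on the \emph{regular} fibers $F^{-1}(w)$, reducing it to a degenerating elliptic-type integral $\int P/\sqrt{Q}$ whose asymptotics as $w\to 0$ yield the logarithm and the finite part. The paper instead never expands period integrals on regular fibers: it invokes the localization result \cite[Prop.~6.8]{san-focus} (Theorem~\ref{san's} above), which says the limits defining $a_1,a_2$ can be computed entirely on the \emph{singular} fiber $\Lambda_0$, along a radial integral curve of $\mathcal{X}_H$ on the pinched torus, with the counterterm $\ln(r_{A_0}\rho_{B_0})$ expressed in (linearized) Eliasson coordinates of the endpoints. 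This turns the problem into an elementary one-variable integral $\int du/y_\pm(u)$ with explicit primitive, avoiding the asymptotic analysis of complete elliptic integrals with degenerating modulus. Your approach is more self-contained but analytically heavier; the paper's buys simplicity at the price of citing a nontrivial localization theorem.

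There is one concrete gap you must close for your route to produce the numbers $\tfrac{\pi}{2}$ and $5\ln 2$ rather than something off by explicit constants: the invariant is defined through the Eliasson normal-form momentum map $q=g\circ F$, not through $(J,H)$ itself. This enters in two places. First, $\tau_2(c)$ is the return time of the flow of $q_2$, and on $\Lambda_0$ one has $\mathcal{X}_{q_2}=2\,\mathcal{X}_H$ (the factor $2$ being the entry $B_{22}$ of the linearization $B=\mathrm{diag}(1,2)$ computed in Lemma~\ref{lem:El}); using the $H$-flow without this rescaling halves the period. Second, the logarithmic counterterm is $\ln\lvert c\rvert$ with $c=g(w)$, and since $\lvert Bw\rvert/\lvert w\rvert$ depends on the direction of $w$ (as $B$ is not a similarity), regularizing with $\ln\lvert w\rvert$ in the raw $(J,H)$-coordinates gives a limit that either fails to exist or depends on the ray of approach. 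You flag ``normalization conventions'' as a worry but do not resolve it; the resolution requires explicitly computing the quadratic Birkhoff data at $m$ (the analogue of Lemma~\ref{lem:El}) before any period integral is expanded. With that ingredient added, your plan is viable.
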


As far as we know, this theorem gives the first rigorous estimate in the literature
of the logarithmic deviation, and hence the first explicit quantization of the
symplectic dynamics around the singularity; we prove it in Section 2.  The proof is
computational but rather subtle, and it combines a number of theorems from
integrable systems and semiclassical analysis.  The method of proof of Theorem 
\ref{main} (given
in several steps) provides a fairly general algorithm to implement in
the case of other semitoric integrable systems. Moreover, it seems plausible
to expect that the techniques we introduce generalize to compute higher
order approximations, but not immediately -- indeed, the linear approximation
relies on various semiclassical formulas that are not readily available for
higher order approximations. In this paper we will
also find the other invariants that characterize the coupled
spin\--oscillator (Section 3): the polygon and height invariants;
these are easier to find.

\subsection*{Spectral theory for quantum coupled spin\--oscillators}

Sections 4, 5 of this paper are devoted to the spectral theory of
quantum coupled spin\--oscillators.  The following theorem describes
the quantum spin\--oscillator.  For any $\hbar>0$ such that
$2=\hbar(n+1)$, for some non-negative integer $n\in\N$, let
$\mathcal{H}$ denote the standard $n+1$-dimensional Hilbert space
quantizing the sphere $S^2$ (see
Section~\ref{sec:harmonic_quantization}).

 \begin{figure}[h]
   \centering
   \includegraphics[width=0.3\linewidth]{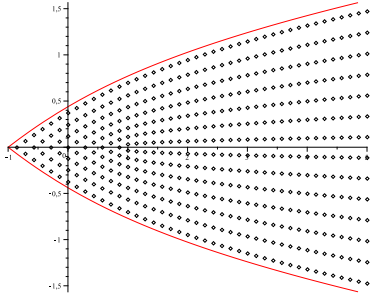}
   \caption{Semiclassical joint spectrum of $\hat{J}, \hat{H}$. We will
   explain this figure in more detail in Section 4.}
   \label{fig:spectrumapprox2versionsmall}
 \end{figure}

 \begin{theorem} \label{thm:spectral} Let $S^2 \times \mathbb{R}^2$ be
   the coupled spin\--oscillator, and (as above) let $J,\, H \colon M
   \to \mathbb{R}$ be the Poisson commuting smooth functions that
   define it. The unbounded operators $\hat{J}:=\op{Id} \otimes \Big(
   -\frac{\hbar^2}{2} \frac{\op{d}^2}{\op{d}u^2} +\frac{u^2}{2} \Big)
   + (\hat{z} \otimes \op{Id})$ and
   $\hat{H}=\frac{1}{2}(\hat{x}\otimes u + \hat{y} \otimes
   (\frac{\hbar}{\ii}\frac{\partial}{\partial u})$ on the Hilbert
   space $ \mathcal{H} \otimes \op{L}^2(\R)\subset \op{L}^2(\R^2)
   \otimes \op{L}^2(\R)$ are self\--adjoint and commute. The spectrum
   of $\hat{J}$ is discrete and consists of eigenvalues in
   $\hbar(\frac{1-n}{2}+\N)$.

   For a fixed eigenvalue $\lambda$ of $\hat{J}$, let
   $\mathcal{E}_{\lambda}:=\op{ker}(\hat{J}-\lambda \op{Id})$ be the
   eigenspace of the operator $\hat{J}$ over $\lambda$.  There exists
   a basis $\mathcal{B}_{\lambda}$ of $\mathcal{E}_{\lambda}$ in which
   $\hat{H}$ restricted to $\mathcal{E}_{\lambda}$ is given by
  \begin{eqnarray*}
    \op{M}_{\mathcal{B}_{\lambda}}(\hat{H})=
    \Big(\frac{\hbar}{2}\Big)^{\frac{3}{2}}\,
    \left( \begin{array}{ccccccc}
        0   & \beta_1 & \dots & &&&0 \\
        \beta_1     & 0      &  \beta_2 &      &               &&  0\\
        0 &\beta_2&   0 &\beta_3               && & 0\\
        & &     & &\\
        \vdots & \vdots & \ddots  &\vdots &&\vdots &\vdots\\
        & &                     &  &&&\beta_{\mu} \\
        0& 0& \dots & & & \beta_{\mu}& 0
      \end{array} \right),
  \end{eqnarray*}
  where  $0 \le k \le n$,
  $\ell_0:=\frac{\lambda}{\hbar}+\frac{n-1}{2}$,
  $\mu:=\op{min}(\ell_0,n)$, $\beta_k:=\sqrt{(\ell_0+1-k)k(n-k+1)}$.   
 
  The dimension of $\mathcal{E}_{\lambda}$ is $\mu+1$.
\end{theorem}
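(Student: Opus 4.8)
The plan is to decompose the Hilbert space into the eigenspaces of $\hat J$ and to read off $\hat H$ on each of them using the standard ladder operators of the two tensor factors. First I would recall from Section~\ref{sec:harmonic_quantization} the two quantizations. Writing $\hat u,\hat v$ for the quantizations of $u,v$ (so $\hat u$ is multiplication by $u$ and $\hat v=\frac\hbar\ii\frac{\partial}{\partial u}$), the harmonic oscillator $h:=\frac12(\hat u^2+\hat v^2)=-\frac{\hbar^2}{2}\frac{\op{d}^2}{\op{d}u^2}+\frac{u^2}{2}$ is essentially self\--adjoint, with simple eigenvalues $\hbar(j+\frac12)$, $j\in\N$, on the Hermite basis $(f_j)$; setting $a:=\frac1{\sqrt{2\hbar}}(\hat u+\ii\hat v)$ one has $h=\hbar(a^\dagger a+\frac12)$, $af_j=\sqrt{j}\,f_{j-1}$ and $a^\dagger f_j=\sqrt{j+1}\,f_{j+1}$. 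On $\mathcal H$, the spin\--$\frac n2$ representation makes $\hat z=\hbar S_z$ self\--adjoint with simple eigenvalues $\hbar(k-\frac n2)$ on an orthonormal basis $(e_k)$, $0\le k\le n$, while $S_\pm:=\frac1\hbar(\hat x\pm\ii\hat y)$ satisfy $S_+e_k=\sqrt{(n-k)(k+1)}\,e_{k+1}$ and $S_-e_k=\sqrt{k(n-k+1)}\,e_{k-1}$. Then $\hat J=\hat z\otimes\op{Id}+\op{Id}\otimes h$ is the sum of the self\--adjoint operator $\op{Id}\otimes\overline h$ and the bounded self\--adjoint operator $\hat z\otimes\op{Id}$, hence self\--adjoint, with orthonormal eigenbasis $(e_k\otimes f_j)$ and discrete spectrum $\{\hbar(k-\frac n2)\}_{0\le k\le n}+\hbar(\N+\frac12)=\hbar(\frac{1-n}{2}+\N)$; for $\lambda=\hbar(\ell_0+\frac{1-n}{2})$ the eigenspace $\mathcal E_\lambda$ is spanned by those $e_k\otimes f_{\ell_0-k}$ with $0\le k\le n$ and $\ell_0-k\ge0$, i.e.\ $0\le k\le\mu=\op{min}(\ell_0,n)$, so $\dim\mathcal E_\lambda=\mu+1$.

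The computational core is the reduction of $\hat H$. Substituting $\hat x=\frac\hbar2(S_++S_-)$, $\hat y=\frac\hbar{2\ii}(S_+-S_-)$, $\hat u=\sqrt{\frac\hbar2}(a+a^\dagger)$ and $\hat v=\frac1\ii\sqrt{\frac\hbar2}(a-a^\dagger)$ into $\hat H=\frac12(\hat x\otimes\hat u+\hat y\otimes\hat v)$ --- with the orientation conventions of Section~\ref{sec:harmonic_quantization}, chosen so that $\{J,H\}=0$ holds --- the two cross terms that would shift the $\hat J$\--eigenvalue cancel and one is left with
\[
\hat H=\Big(\frac{\hbar}{2}\Big)^{3/2}\big(S_+\otimes a+S_-\otimes a^\dagger\big).
\]
This commutes with $\hat J$: from $[\hat z,S_+]=\hbar S_+$ and $[h,a]=-\hbar a$ one gets $[\hat J,S_+\otimes a]=0$, and likewise $[\hat J,S_-\otimes a^\dagger]=0$; and it is symmetric, since $(S_+\otimes a)^*=S_-\otimes a^\dagger$. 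As $\hat J$ and $\hat H$ both preserve the algebraic span $\mathcal D_0$ of the $e_k\otimes f_j$, on which $\hat J|_{\mathcal D_0}$ has an orthonormal eigenbasis and $\hat H|_{\mathcal D_0}=\bigoplus_\lambda\hat H|_{\mathcal E_\lambda}$ is an orthogonal sum of finite\--dimensional self\--adjoint operators, both operators are essentially self\--adjoint on $\mathcal D_0$ and strongly commute, which is the self\--adjointness and commutativity claimed.

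It remains to evaluate $\hat H$ on the ordered basis $e_0\otimes f_{\ell_0},\dots,e_\mu\otimes f_{\ell_0-\mu}$ of $\mathcal E_\lambda$. The ladder formulas give
\[
\hat H\big(e_k\otimes f_{\ell_0-k}\big)=\Big(\frac{\hbar}{2}\Big)^{3/2}\Big(\sqrt{(n-k)(k+1)(\ell_0-k)}\,e_{k+1}\otimes f_{\ell_0-k-1}+\sqrt{k(n-k+1)(\ell_0-k+1)}\,e_{k-1}\otimes f_{\ell_0-k+1}\Big),
\]
so the matrix of $\hat H|_{\mathcal E_\lambda}$ is tridiagonal with zero diagonal, and the two expressions obtained for the $(k-1,k)$ and $(k,k-1)$ entries coincide and equal $(\frac{\hbar}{2})^{3/2}\beta_k$ with $\beta_k=\sqrt{k(n-k+1)(\ell_0+1-k)}$, $1\le k\le\mu$. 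For $k=\mu$ a factor under the first radical vanishes (namely $n-k$ if $\mu=n$, and $\ell_0-k$ if $\mu=\ell_0<n$), which is exactly why $\hat H$ maps $\mathcal E_\lambda$ into itself; this reproduces the matrix $\op{M}_{\mathcal B_\lambda}(\hat H)$ in the statement.

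The main difficulty is not an isolated deep step but the bookkeeping. First, the sign/orientation conventions: using $\hat y\leftrightarrow\hbar S_y$ with the ``standard'' orientation would instead yield the pairing $S_+\otimes a^\dagger+S_-\otimes a$, which does \emph{not} commute with $\hat J$; one has to fix the orientations of $S^2$ and $\R^2$ so that the classical identity $\{J,H\}=0$ is respected, and only then does the reduction produce the displayed $\hat H$. Second, the functional analysis: one must specify the domains of the unbounded operators $\hat J,\hat H$, check essential self\--adjointness (immediate here because $\mathcal D_0$ is a common core of eigenvectors for $\hat J$), and make precise that ``commute'' means commutation of the spectral projections --- automatic since $\hat J$ has discrete spectrum with finite\--dimensional eigenspaces that $\hat H$ preserves. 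Once the conventions are pinned down, the algebraic part --- the collapse of $\hat H$ to two ladder terms and the resulting Jacobi matrix --- is short.
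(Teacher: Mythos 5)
Your argument is correct, and its computational core --- decomposing $\mathcal H\otimes\op{L}^2(\R)$ into $\hat J$\--eigenspaces and reading off $\hat H$ as a tridiagonal Jacobi matrix via ladder operators --- is the same as the paper's, just phrased in abstract spin/oscillator ladder language where the paper uses the double Bargmann representation (monomials $\tau^{\ell}\otimes z_1^kz_2^{n-k}$, with $a_j\leftrightarrow \partial/\partial z_j$ and $a_j^*\leftrightarrow z_j$); your eigenvalue count, dimension formula and coefficients $\beta_k$ agree with Lemma~\ref{lemm:J-eigenspace} and Proposition~\ref{matrix:prop}. Two organizational differences are worth recording. First, the paper establishes $[\hat J,\hat H]=0$ separately (Lemma~\ref{id0:lem}) by a direct commutator computation using $[u,\hat N]=\ii\hbar\hat v$, $[\hat x,\hat z]=\ii\hbar\hat y$, etc., whereas you first derive the normal form $\hat H=(\hbar/2)^{3/2}\bigl(S_+\otimes a+S_-\otimes a^{\dagger}\bigr)$ and obtain commutativity as an immediate corollary, each summand visibly preserving the $\hat J$\--eigenvalue; this is the cleaner logical order, and your observation that the opposite sign convention would produce $S_+\otimes a^{\dagger}+S_-\otimes a$ (which does not commute with $\hat J$) is precisely the cross\--term cancellation that occurs in the paper's computation of $\hat H(\tau^{\ell}z_1^kz_2^{n-k})$. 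Second, you supply the functional\--analytic content behind the words ``self\--adjoint and commute'' --- essential self\--adjointness on the algebraic span of the joint eigenvectors and strong commutation via spectral projections, automatic here because $\hat J$ has discrete spectrum with finite\--dimensional eigenspaces preserved by $\hat H$ --- which the paper treats only at the algebraic level. Both approaches buy the same theorem; yours makes the mechanism of commutativity transparent, while the paper's Bargmann monomials make the normalization constants and the orthonormal basis completely explicit.
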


Finding out how information from quantum completely integrable systems
leads to information about classical systems is a fascinating
``inverse'' problem with very few precise results at this time.
Section 5 explains how information of the coupled spin\--oscillator,
including its \emph{linear} singularity theory (computed in Section
2), may be recovered from the quantum semiclassical spectrum.

The way in which we recover this linear singularity theory relies on a
conjecture for Toeplitz operators, which has been proven 
for pseudodifferential operators.  We explain in detail how to do this
and formulate the following conjecture about semitoric integrable
systems: that a semitoric system is determined up to symplectic
equivalence by its semiclassical joint spectrum, i.e.  the set of points 
in $\mathbb{R}^2$ where on the $x$\--axis we
have the eigenvalues of $\hat{J}$, and on the vertical axis the
eigenvalues of $\hat{H}$ restricted to the $\lambda$\--eigenspace of
$\hat{J}$.  From any such
spectrum one can construct explicitly the associated semitoric system.
We give strong evidence of this conjecture for the coupled
spin oscillators.

\paragraph{Acknowledgements.} 
The work on this article started during a short but intense visit of
the second author to Berkeley. He is grateful to the Berkeley maths
department, and in particular to Alan Weinstein and Maciej Zworski for
their invitation.  Part of this paper was written while the first
author was a Professeur Invit\'e in the \'Equations aux D\'eriv\'ees
Partielles Section at the Institut de Recherches Mathématiques de
Rennes (Universit\'e Rennes 1) during January 2010, and he thanks them
for the warm hospitality. He also thanks MSRI for hospitality during
the Fall of 2009 and Winter 2010 when we has a member, and the
University of Paris\--Orsay for their hospitality during the author's
visit on February 2010, during which a portion of this paper was
written.

\section{Singularity theory for coupled spin\--oscillators}
\label{taylor:sec}

This section considers semiglobal properties. It is independent of
Section 3 which concerns global properties. The main goal of this section
is to prove Theorem \ref{main}.

Let $(M,\, \omega,\, F:=(J,\,H))$ be a semitoric integrable system.
Recall that a \emph{singular point}, or a \emph{singularity}, is a
point $p \in M$ such that $\,\op{rank}(\op{d}\!F)(p)<2$, where
$F:=(J,\,H) \colon M \to \R^2$.  A \emph{singular fiber} of the system
is a fiber of $F \colon M \to \mathbb{R}^2$ that contains some
singular point. 

Let $m$ be a focus\--focus singular point $m$.  Let $B:=F(M)$.  Let
$\tilde{c}=F(m)$.  The set of regular values of $F$ is
$\op{Int}(B)\setminus\{\tilde{c}\}$, the boundary of $B$ consists of
all images of elliptic singularities, and the fibers of $F$ are
connected (see \cite{san-polytope}).
  
We assume that the critical fiber $ \mathcal{F}_m:=F^{-1}(\tilde{c}) $
contains only one critical point $m$, which according to Zung
\cite{zung} is a generic condition, and let $\mathcal{F}$ denote the
associated singular foliation.
   
By Eliasson's theorem \cite{eliasson-these} there exist symplectic
coordinates $(x_1,\, x_2,\, \xi_1,\,\xi_2)$ in a neighborhood $U$
around $m$ in which $(q_1,\,q_2)$, given by
\begin{equation}
   q_1=x_1\xi_2-x_2\xi_1, \,\, q_2=x_1\xi_1+x_2\xi_2,
  \label{equ:cartan}
\end{equation}
is a momentum map for the foliation $\mathcal{F}$ (in the sense that
for some local diffeomorphism $q=g \circ F$, so the maps $q$ and $F$
have the same fibers); here the critical point $m$ corresponds to
coordinates $(0,\,0,\,0,\,0)$. Because of the uniqueness of the $S^1$\--action
one may chose Eliasson's coordinates \cite{san-focus} such that $q_1=J$.

\subsection{Construction of the singularity invariant at a focus\--focus singularity}
Fix $A'\in \mathcal{F}_m\cap (U\setminus\{m\})$ and let $\Sigma$
denote a small 2\--dimensional surface transversal to $\mathcal{F}$ at
the point $A'$, and let $\Omega$ be the open neighborhood of
$\mathcal{F}_m$ which consists of the leaves which intersect the
surface $\Sigma$.
  
\begin{figure}[h]
  \begin{center}
    \includegraphics{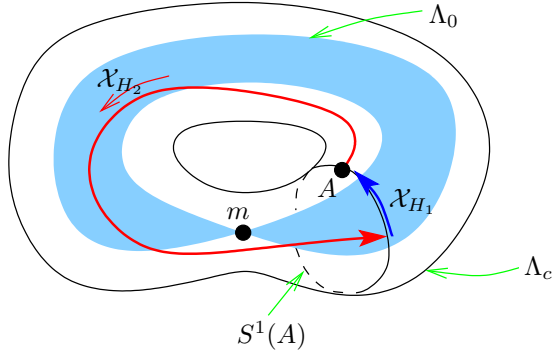}
    \caption{Singular foliation near the leaf $\mathcal{F}_m$, where
      $S^1(A)$ denotes the $S^1$\--orbit generated by $H_1=J$.}
  \end{center}
  \label{AF6}
\end{figure}

Since the Liouville foliation in a small neighborhood of $\Sigma$ is
regular for both $F$ and $q=(q_1,\,q_2)$, there is a local
diffeomorphism $\varphi$ of $\R^2$ such that $q=\varphi \circ {F}$,
and we can define a global momentum map $\Phi=\varphi \circ{F}$ for
the foliation, which agrees with $q$ on $U$.  Write
$\Phi:=(H_1,\,H_2)$ and $\Lambda_c:=\Phi^{-1}(c)$.  For simplicity we
write $\Phi=q$.  Note that $ \Lambda_0=\mathcal{F}_m.$ It follows
from~(\ref{equ:cartan}) that near $m$ the $H_1$\--orbits must be
periodic of primitive period $2\pi$.

Suppose that $A \in\Lambda_c$ for some regular value $c$.  Let
$\tau_2(c)>0$ be the time it takes the Hamiltonian flow associated
with $H_2$ leaving from $A$ to meet the Hamiltonian flow associated
with $H_1$ which passes through $A$, and let $\tau_1(c)\in\R/2\pi\Z$
the time that it takes to go from this intersection point back to $A$,
hence closing the trajectory. We denote by $\gamma_c$ the
corresponding loop in $\Lambda_c$.
  
Write $c=(c_1,\,c_2)=c_1+\ii c_2$, and let $\op{ln} z$ for a fixed
determination of the logarithmic function on the complex plane. Let
\begin{equation}
  \left\{
    \begin{array}{ccl}
      \sigma_1(c) & = & \tau_1(c)-\Im(\op{ln} c) \\
      \sigma_2(c) & = & \tau_2(c)+\Re(\op{ln} c),
    \end{array}
  \right.
  \nonumber
\end{equation}
where $\Re$ and $\Im$ respectively stand for the real an imaginary
parts of a complex number.  V\~u Ng\d oc proved in
\cite[Prop.\,3.1]{san-semi-global} that $\sigma_1$ and $\sigma_2$
extend to smooth and single\--valued functions in a neighbourhood of
$0$ and that the differential 1\--form
  $$
  \sigma:=\sigma_1\, \DD{}c_1+\sigma_2\, \DD{}c_2
  $$
  is closed.  Notice that if follows from the smoothness of $\sigma_2$
  that one may choose the lift of $\tau_2$ to $\R$ such that
  $\sigma_2(0)\in[0,\,2\pi)$. This is the convention used throughout.
  Following \cite[Def.~3.1]{san-semi-global} , let $S$ be the unique
  smooth function defined around $0\in\R^2$ such that
  $$\DD{}S=\sigma,\,\, \,\, S(0)=0.$$
  The Taylor expansion of $S$ at $(0,\,0)$ is denoted by $(S)^\infty$.

  The Taylor expansion $(S)^{\infty}$ is a formal power series in two
  variables with vanishing constant term, and we say that $(S)^\infty$
  is the \emph{Taylor series invariant of $(M,\, \omega,\, (J,\,H))$
    at the focus\--focus point $c$}.

  \subsection{The coupled spin\--oscillators}

  Let $S^2$ be the unit sphere in $\R^3$ with coordinates
  $(x,\,y,\,z)$, and let $\R^2$ be equipped with coordinates $(u,\,
  v)$.  Recall from the introduction that the coupled\--spin
  oscillator model is the product $S^2\times\R^2$ equipped with the
  product symplectic structure $\omega_{S^2} \oplus \omega_0$ given by
  $\op{d}\!\theta \wedge \op{d}\!z \oplus \op{d}\!u \wedge \op{d}\!v$,
  and with the smooth Poisson commuting maps $J,\,H \colon M \to \R$
  given by $ J := (u^2+v^2)/2 + z $ and $H := \frac{1}{2} \, (ux+vy)$.
  Sometimes we denote the coupled spin\--oscillator by the triple
  $(S^2 \times \R^2,\, \omega_{S^2}\oplus\omega_0,\, (J,\,H))$.  A
  simple verification leads to the following observation.

  \begin{prop} \label{prop:nd} The coupled spin--oscillator $(S^2
    \times \R^2,\, \omega_{S^2}\oplus \omega_0,\, (J,\,H))$ is a
    completely integrable system, meaning that the Poisson bracket
    $\{J,\,H\}$ vanishes everywhere\footnote{equivalently the
      Hamiltonian vector field $\mathcal{X}_J$ is constant along the
      flow of $\mathcal{X}_H$}.
    
    In addition, the map $J$ is the momentum map for the Hamiltonian
    circle action of $S^1$ on $S^2 \times \mathbb{R}^2$ that rotates
    simultaneously horizontally about the vertical axes on $S^2$, and
    about the origin on $\R^2$.

    The singularities of the coupled spin--oscillator are
    non\--degenerate and of elliptic\--elliptic,
    transversally\--elliptic or focus\--focus type. It has exactly one
    focus\--focus singularity at the ``North Pole''
    $((0,\,0,\,1),\,(0,\,0)) \in S^2 \times \R^2$ and one
    elliptic\--elliptic singularity at the ``South Pole''
    $((0,\,0,\,-1),\,(0,\,0))$.
  \end{prop}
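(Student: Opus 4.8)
The plan is to verify each of the three assertions by direct computation in the explicit coordinates. First, for the integrability claim, I would compute the Poisson bracket $\{J,\,H\}$ using the product symplectic form $\op{d}\!\theta\wedge\op{d}\!z\oplus\op{d}\!u\wedge\op{d}\!v$. Writing $x=\sqrt{1-z^2}\cos\theta$, $y=\sqrt{1-z^2}\sin\theta$ on the sphere, one has $J=(u^2+v^2)/2+z$ and $H=\frac12(u\sqrt{1-z^2}\cos\theta+v\sqrt{1-z^2}\sin\theta)$. Since $J$ depends on the sphere only through $z$ and on the plane only through $u^2+v^2$, the vector field $\ham{J}$ generates the diagonal rotation $\partial_\theta$ on $S^2$ together with the rotation $u\,\partial_v-v\,\partial_u$ on $\R^2$; a short check shows $H$ is invariant under this flow — this is exactly the geometric argument given in the introduction (the inner product $\langle\pi_z(x,y,z),(u,v)\rangle$ is preserved because both factors rotate with equal angular velocity), so $\{J,\,H\}=\ham{J}(H)=0$ everywhere. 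The footnote's equivalent formulation ($\ham{J}$ constant along the flow of $\ham{H}$) then follows automatically.

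For the second assertion, $J$ is manifestly the momentum map of an $S^1$-action: $z$ is the momentum map for rotation about the vertical axis of $S^2$ (standard), $(u^2+v^2)/2$ is the momentum map for rotation about the origin in $\R^2$ (the harmonic oscillator / angular momentum), and the sum is the momentum map for the diagonal action. The action is well-defined ($2\pi$-periodic) because each factor is, and properness of $J$ as a map needs a remark: $J$ itself is not proper on $S^2\times\R^2$, but the $S^1$-action it generates is the relevant structure; I would simply record that $J$ is the momentum map of this circle action, which is what the semitoric framework requires together with the separate properness statement used elsewhere.

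The third and main assertion is the classification of singularities. Here I would compute $\op{d}\!J$ and $\op{d}\!H$ and locate the critical set $\{\op{rank}(\op{d}\!F)<2\}$. Away from the poles, using coordinates where $z\in(-1,1)$, one finds that $\op{d}\!J$ and $\op{d}\!H$ become dependent along a one-dimensional locus; the rank-one points are the transversally-elliptic singularities, and checking the transverse Hessian (after restricting to a symplectic slice of the $S^1$-orbit) shows the transverse block is elliptic — this is the routine Williamson-type linear algebra. The two fixed points of the $S^1$-action on the sphere factor, namely the North Pole $((0,0,1),(0,0))$ and South Pole $((0,0,-1),(0,0))$, are the rank-zero points. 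At each I would introduce Darboux coordinates: near the South Pole, stereographic-type symplectic coordinates $(x_1,\xi_1)$ on $S^2$ and $(x_2,\xi_2)=(u,v)$ on $\R^2$, and expand $F$ to second order, obtaining a Williamson normal form with two elliptic blocks (both $J$ and $H$ have a local extremum structure there), hence elliptic-elliptic. Near the North Pole the sphere coordinates have the opposite orientation relative to the plane factor, and the quadratic part of $(J-J(m),\,H-H(m))$ takes — after a linear symplectic change of coordinates — the form $(x_1\xi_1+x_2\xi_2,\,x_1\xi_2-x_2\xi_1)$ up to the allowed linear combination, which is precisely the focus\--focus model of Williamson; I would verify non-degeneracy by checking that the Hessians span a Cartan subalgebra. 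The arithmetic of expanding $H=\frac12\sqrt{1-z^2}(u\cos\theta+v\sin\theta)$ in Darboux coordinates near $z=1$ is the part requiring care, since $\sqrt{1-z^2}$ contributes the leading linear-in-the-sphere-coordinate factor that couples the two degrees of freedom.

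The main obstacle is the explicit second-order expansion at the North Pole and confirming it is of focus\--focus (rather than degenerate or of another Williamson type): one must choose Darboux coordinates adapted to the singular $S^1$-orbit structure and then diagonalize the pair of quadratic forms simultaneously into the $q_1=x_1\xi_2-x_2\xi_1$, $q_2=x_1\xi_1+x_2\xi_2$ normal form. Everything else — the bracket computation, the momentum-map identification, and the transversally-elliptic locus — is a ``simple verification'' as the proposition's preamble indicates.
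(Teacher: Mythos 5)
Your proposal is correct and follows essentially the route the paper takes: the paper treats the bracket and momentum-map claims as the ``simple verification'' you describe (the rotation-invariance argument is exactly the one in the introduction), and the only genuinely delicate point, the focus--focus normal form at the North Pole, is handled in the paper by the explicit linear symplectomorphism of Lemma~\ref{lem:El}, which is precisely the linear change of Darboux coordinates you propose to construct. One small correction: your aside that $J$ is not proper is wrong — since $S^2$ is compact and $z$ is bounded, $J^{-1}(K)$ is compact for compact $K$, and this properness is in fact needed for the corollary that the system is semitoric.
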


\begin{cor}
  The coupled spin--oscillator $(S^2 \times \R^2,\, \omega_{S^2}\oplus
  \omega_0,\, (J,\,H))$ is a semitoric integrable system.
\end{cor}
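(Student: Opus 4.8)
The plan is to deduce the claim directly from Proposition~\ref{prop:nd} together with the definition of a semitoric system recalled in the introduction; the only ingredient not already contained in that proposition is the properness of $J$, so the real content of the proof is that single verification. Recall that, by definition, $(M,\,\omega,\,(J,\,H))$ is semitoric when it is a completely integrable system in which $J$ is a proper momentum map for a Hamiltonian $S^1$\--action and every singularity of $F=(J,\,H)$ is non\--degenerate in the sense of Williamson with no real\--hyperbolic block.

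Proposition~\ref{prop:nd} already supplies all of these except properness: it gives $\{J,\,H\}=0$ on all of $S^2\times\R^2$, identifies $J$ as the momentum map of the simultaneous rotation $S^1$\--action, and states that every singular point is non\--degenerate of elliptic\--elliptic, transversally\--elliptic or focus\--focus type. None of these Williamson normal forms contains a real\--hyperbolic block, so the singularity hypothesis of the definition is met. Hence the corollary will follow once we check that $J=(u^2+v^2)/2+z\colon S^2\times\R^2\to\R$ is proper.

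For properness I would argue as follows. Let $K\subset\R$ be compact, say $K\subset[-R,\,R]$. On $J^{-1}(K)$ one has $(u^2+v^2)/2=J-z\le R+1$, using that $z\in[-1,\,1]$ on $S^2$; thus the $\R^2$\--coordinates of any point of $J^{-1}(K)$ lie in the closed disk $\{u^2+v^2\le 2(R+1)\}$. Therefore $J^{-1}(K)$ is a closed subset of $S^2\times\{u^2+v^2\le 2(R+1)\}$, which is compact as a product of compact sets, and a closed subset of a compact set is compact. So $J$ is proper, and combining this with Proposition~\ref{prop:nd} verifies every clause of the definition of a semitoric integrable system.

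There is no serious obstacle here: this is a bookkeeping corollary, and the only step requiring an actual (if routine) argument is the coercivity of the harmonic\--oscillator term $(u^2+v^2)/2$, which is what forces $J$ to be proper even though $M=S^2\times\R^2$ is non\--compact.
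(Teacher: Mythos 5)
Your proof is correct and takes the approach the paper implicitly intends: the corollary is stated in the paper with no proof, as an immediate consequence of Proposition~\ref{prop:nd}, and the one clause of the definition of a semitoric system that Proposition~\ref{prop:nd} does not explicitly address is the properness of $J$. Your coercivity argument ($z\in[-1,1]$ bounds $(u^2+v^2)/2 = J-z$ on $J^{-1}(K)$, so $J^{-1}(K)$ is a closed subset of the compact set $S^2\times\{u^2+v^2\le 2(R+1)\}$) is exactly the routine verification the paper leaves to the reader, and making it explicit is a genuine, if small, improvement in completeness.
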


  Computing the Taylor series invariant at the focus\--focus singularity
  is rather involved.  At this point we are able to compute the first two
  terms $a_1,\,a_2$ (for the coupled spin\--oscillators). Even in this
  case one has to do a delicate
  coordinate analysis of flows involving Eliasson's coordinates, and
  the computation of various integrals.

\begin{figure}[htbp]
  \begin{center}
    \includegraphics[width=5.3cm]{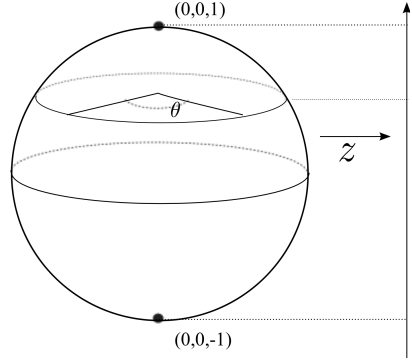}
    \caption{Spin model with momentum map $z$. Here
    $(\theta,\, z)$ are the angle\--height coordinates on the unit sphere $S^2$.}
    \label{fig:fig1}
  \end{center}
\end{figure}

  \subsection{Set up for coupled spin\--oscillators --- Integral
    formulas for singularity invariant}

  Throughout we let $M=S^2 \times \R^2$ and $F=(J,\,H)$. 
  In this set up stage we introduce the $1$\--forms $\kappa_{1,c}$ and
  $\kappa_{2,c}$ in terms of which the Taylor series in defined in
  \cite{san-semi-global}, and we recall limit integral formulas for
  the Taylor series invariant. Then we introduce the limit theorem
  proved in the semiclassical paper \cite[Proposition 6.8]{san-focus},
  which will be the key ingredient for the computation .

  The formulas that we present here do not correspond to the exact
  statements in the corresponding papers, but can be immediately
  deduced from it assuming the context of the present paper.

\paragraph{The one forms $\kappa_{1,c}$ and $\kappa_{2,c}$.}
As usual, we denote by $\mathcal{X}_{q_i}$ the Hamiltonian vector
field generated by $q_i$, $i=1,\,2$.  Let $c$ be a fixed regular value
of $F$.  Let $\kappa_{1,c} \in \Omega^1(\Lambda_c),\, \kappa_{1,c} \in
\Omega^1(\Lambda_c)$ be the smooth $1$\--forms on the fiber
$\Lambda_c:=F^{-1}(c)$ corresponding to the value $c$ defined by the
conditions
\begin{eqnarray} \label{for:kappa1}
  \kappa_{1,c}(\mathcal{X}_{q_1}):=-1,\,\,\,\,
  \kappa_{1,c}(\mathcal{X}_{q_2}):=0,
\end{eqnarray}
and
\begin{eqnarray} \label{for:kappa2}
  \kappa_{2,c}(\mathcal{X}_{q_1}):=0,\,\,\,\,
  \kappa_{2,c}(\mathcal{X}_{q_2}):=-1.
\end{eqnarray}
Note that the conditions in (\ref{for:kappa1}) and (\ref{for:kappa2})
are enough to determine $\kappa_{1,c}$ and $\kappa_{2,c}$ on
$\Lambda_c$ because $\mathcal{X}_{q_1},\, \mathcal{X}_{q_2}$ form a
basis of each tangent space.

We will call $\kappa_{1,0}$, $\kappa_{2,0}$ the corresponding form
defined in the same way as $\kappa_{1,c}$, $\kappa_{2,c}$, but only on
$\Lambda_0\setminus \{m\}$, where $m=(0,0,1,0,0)$ is the singular
point of the focus\--focus singular fiber $\Lambda_0$.

\begin{remark}
  The forms $\kappa_{1,c},\, \kappa_{2,c}$ , $i=1,\,2$ are closed. See
  also \cite[Section 3.2.1]{san-focus}.
\end{remark}

\paragraph{Limit integral formula for Taylor invariants.}

The following result will be key for our purposes in the present
paper.

\begin{lemma} \label{lem:limit} Let $(S) \in \mathbb{R}[[X,\,Y]]$ be
  the Taylor series invariant of the coupled\--spin oscillator. Then
  the first terms of the Taylor series are given by the limits of
  integrals $a_1=\lim_{c \to 0} \Big(
  \int_{\gamma_c}\kappa_{1,c}+\op{arg}(c) \Big) $ and $a_2=\lim_{c \to
    0} \Big( \int_{\gamma_c}\kappa_{2,c}+\op{ln} \vert c \vert \Big)$.
\end{lemma}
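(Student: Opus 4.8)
The plan is to reduce the statement to the definitions of $\sigma_1,\sigma_2$ and $S$ given above, and to the relation between the loop $\gamma_c$ and the forms $\kappa_{1,c},\kappa_{2,c}$. First I would recall that, by construction, the loop $\gamma_c\subset\Lambda_c$ is obtained by following the flow of $\mathcal{X}_{q_2}$ for time $\tau_2(c)$ and then the flow of $\mathcal{X}_{q_1}$ for time $\tau_1(c)$. Since $\kappa_{1,c}(\mathcal{X}_{q_1})=-1$ and $\kappa_{1,c}(\mathcal{X}_{q_2})=0$, integrating $\kappa_{1,c}$ along $\gamma_c$ picks up exactly $-\tau_1(c)$ from the first piece and $0$ from the second; similarly $\int_{\gamma_c}\kappa_{2,c}=-\tau_2(c)$. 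Thus $\int_{\gamma_c}\kappa_{1,c}=-\tau_1(c)$ and $\int_{\gamma_c}\kappa_{2,c}=-\tau_2(c)$ (the first modulo $2\pi\Z$, matching the ambiguity in $\tau_1$). Here I should be slightly careful about sign/orientation conventions for $\gamma_c$ and about the fact that $\tau_1$ lives in $\R/2\pi\Z$; the claimed limit formulas are insensitive to this because adding a constant to $a_1$ is absorbed and the limit is taken after the logarithmic correction.

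Next I would bring in the functions $\sigma_1,\sigma_2$. Writing $c=c_1+\ii c_2$ and $\op{arg}(c)=\Im(\op{ln}c)$, $\op{ln}|c|=\Re(\op{ln}c)$, the definitions give
\[
\sigma_1(c)=\tau_1(c)-\Im(\op{ln}c),\qquad \sigma_2(c)=\tau_2(c)+\Re(\op{ln}c).
\]
Hence $\int_{\gamma_c}\kappa_{1,c}+\op{arg}(c)=-\tau_1(c)+\Im(\op{ln}c)=-\sigma_1(c)$ and $\int_{\gamma_c}\kappa_{2,c}+\op{ln}|c|=-\tau_2(c)+\Re(\op{ln}c)=-\sigma_2(c)+2\Re(\op{ln}c)$ — so I would instead match signs directly: $\int_{\gamma_c}\kappa_{2,c}+\op{ln}|c| = -(\tau_2(c)-\Re(\op{ln}c))$, and since $\op{d}S=\sigma=\sigma_1\op{d}c_1+\sigma_2\op{d}c_2$ with the stated sign conventions, these combine (up to the fixed overall sign dictated by the orientation of $\gamma_c$ and the definition \eqref{for:kappa1}--\eqref{for:kappa2}) into $\partial S/\partial c_1$ and $\partial S/\partial c_2$ evaluated in the limit $c\to0$. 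By V\~u Ng\d oc's result \cite[Prop.\,3.1]{san-semi-global}, $\sigma_1,\sigma_2$ extend smoothly and single\--valuedly across $0$, so the limits exist and equal $\sigma_1(0),\sigma_2(0)$, which by $\op{d}S=\sigma$, $S(0)=0$ are precisely the linear (degree\--one) coefficients $a_1,a_2$ of $(S)^\infty$. This identifies $a_1=\lim_{c\to0}\big(\int_{\gamma_c}\kappa_{1,c}+\op{arg}(c)\big)$ and $a_2=\lim_{c\to0}\big(\int_{\gamma_c}\kappa_{2,c}+\op{ln}|c|\big)$ after fixing conventions consistently.

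The main obstacle is purely bookkeeping of signs and normalizations: reconciling (i) the orientation of the loop $\gamma_c$ with the sign $-1$ in the normalization of $\kappa_{1,c},\kappa_{2,c}$, (ii) the choice $\sigma_2(0)\in[0,2\pi)$ fixing the lift of $\tau_2$, and (iii) the precise identification of $(c_1,c_2)$ with the arguments $(X,Y)$ of the Taylor series $S$, so that the "first terms $a_1,a_2$" in the statement are literally the coefficients of $X$ and $Y$. None of these steps is deep — each is a direct unwinding of a definition recalled in Section~\ref{taylor:sec} — but getting every sign right simultaneously is where care is needed; I would do this by evaluating both sides on the model $q_1=x_1\xi_2-x_2\xi_1$, $q_2=x_1\xi_1+x_2\xi_2$ near $m$, where $\tau_1,\tau_2$ and the $\kappa_{i,c}$ can be written down explicitly, and then appealing to \cite[Prop.\,3.1]{san-semi-global} for the smooth extension that legitimizes passing to the limit.
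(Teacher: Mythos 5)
Your proposal follows essentially the same route as the paper: identify $\int_{\gamma_c}\kappa_{i,c}$ with $\tau_i(c)$ via the normalizations \eqref{for:kappa1}--\eqref{for:kappa2}, rewrite the corrected quantities as $\sigma_1(c),\sigma_2(c)$, invoke the smoothness from \cite[Prop.\,3.1]{san-semi-global} to pass to the limit, and read off $a_1=\sigma_1(0)$, $a_2=\sigma_2(0)$ as the linear coefficients of $(S)^\infty$ since $\op{d}\!S=\sigma$. Your extra caution about the sign coming from the $-1$ normalization of the $\kappa_{i,c}$ is reasonable (the paper simply asserts $\tau_i(c)=\int_{\gamma_c}\kappa_{i,c}$ and is itself not fully consistent in signs between the displayed definition of $\sigma_1,\sigma_2$ and its restatement), but this is a matter of convention and does not change the argument.
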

 
\begin{proof}
  It follows from the definition of the dynamical invariants
  $\tau_1(c)$ and $\tau_2(c)$ in Section \ref{taylor:sec} and the
  definition of $\kappa_{1,c}$ and $\kappa_{2,c}$ in
  (\ref{for:kappa1}) and (\ref{for:kappa2}) respectively that $
  \tau_i(c)=\int_{\gamma_c} \kappa_{i,c},\,\,\, i=1,\,2.  $ The first
  two terms of the Taylor series invariant $\sigma_1(0)$ and
  $\sigma_2(0)$ where $ \sigma_1=\tau_1+\op{arg}(c)$ and $
  \sigma_2=\tau_2-\op{ln}\vert c \vert $.

  Since $\sigma_1$ and $\sigma_2$ are smooth, we have that $
  a_1=\sigma_1(0)=\lim_{c \to 0} \Big(
  \int_{\gamma_c}\kappa_{1,c}+\op{arg}(c) \Big) $ and $
  a_2=\sigma_2(0)=\lim_{c \to 0} \Big(
  \int_{\gamma_c}\kappa_{2,c}+\op{ln} \vert c \vert \Big) $.
\end{proof}

\paragraph{Localization on the critical fiber.}

On the other hand, we have the following \cite[Proposition
6.8]{san-focus} result proved by the second author.

\begin{theorem}[\cite{san-focus}] \label{san's} Let $\gamma_0$ be a
  radial simple loop.  The integrals in Lemma \ref{lem:limit} are
  respectively equal to
  \begin{eqnarray}\label{equ:formula2}
    a_1=\lim_{c \to 0} \Big(   \int_{\gamma_c}\kappa_{1,c}+\op{arg}(c)      \Big)=
    \lim_{(s,\,t) \to (0,\,0)} \Big(\int_{A_0=\gamma_0(s)}^{B_0=\gamma_0(1-t)}
    \kappa_{1,0}+(t_A-\theta_B)\Big),
  \end{eqnarray}
  and
  \begin{eqnarray} \label{ex1} a_2:=\lim_{c \to 0} \Big(
    \int_{\gamma_c}\kappa_{2,c}+\op{ln} \vert c \vert \Big)=
    \lim_{(s,\,t) \to (0,\,0)}
    \Big(\int_{A_0:=\gamma_0(s)}^{B_0(t):=\gamma_0(1-t)}
    \kappa_{2,0}+\op{ln}(r_{A_0}\rho_{B_0}) \Big),
  \end{eqnarray}
  where for any point $A$ in $M$ close to $m$ with Eliasson
  coordinates $(x_1,\,x_2,\, \xi_1,\, \xi_2)$ as defined in equation
  (\ref{equ:cartan}), we denote by $(r_A,\,t_A,\, \rho_A, \,
  \theta_A)$ the polar symplectic coordinates\footnote{These
    coordinates $(r_A,\,t_A,\, \rho_A, \, \theta_A)$ should not be
    confused with the coordinates $(r,\,t,\, \rho, \, \theta)$ without
    the subscript, which are coordinates in $\R^2 \times S^2$.}  of
  $A$, i.e. $(r_A,\, t_A)$ are polar coordinates corresponding to
  $(x_1,\,x_2)$ and $(\rho_A, \, \theta_A)$ are polar coordinates
  corresponding to $(\xi_1,\, \xi_2)$.
\end{theorem}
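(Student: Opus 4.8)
\begin{skproof}
The plan is to localise the computation near the focus\--focus point $m$. I would fix the Eliasson neighbourhood $U$ of~(\ref{equ:cartan}) and, choosing $U$ and $\gamma_c$ suitably, cut the radial loop at $\partial U$ into its part $\gamma_c^{\mathrm{in}}:=\gamma_c\cap U$, a single arc with two endpoints on $\partial U$, and its complement $\gamma_c^{\mathrm{out}}$, which for $|c|$ small stays in a fixed compact set $K\subset\Omega\setminus\{m\}$. The two pieces are then treated by unrelated arguments, and the heart of the matter is that the counterterms $\op{arg}(c)$, $\op{ln}|c|$ of Lemma~\ref{lem:limit} --- call them $\mathrm{ctr}_1(c),\mathrm{ctr}_2(c)$ --- absorb exactly the divergences produced inside $U$.

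For $\gamma_c^{\mathrm{out}}$ I would argue by continuity. On each regular leaf $\kappa_{1,c},\kappa_{2,c}$ solve the linear system~(\ref{for:kappa1})--(\ref{for:kappa2}) relative to the frame $(\mathcal{X}_{q_1},\mathcal{X}_{q_2})$, which depends smoothly on the base point and on $c$ and has a smooth limit on $K$ as $c\to 0$ (since $K$ avoids $m$); hence $\kappa_{i,c}\to\kappa_{i,0}$ in $\op{C}^{\infty}(K)$. As $\gamma_c^{\mathrm{out}}$ and its two endpoints on $\partial U$ converge to the outside arc $\gamma_0^{\mathrm{out}}$ of the radial loop $\gamma_0$ and to two points $P,Q\in\partial U$ lying on the two local branches $\{\rho=0\}$ and $\{r=0\}$ of $\Lambda_0\cap U$, one gets $\int_{\gamma_c^{\mathrm{out}}}\kappa_{i,c}\to\int_{\gamma_0^{\mathrm{out}}}\kappa_{i,0}$.

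For $\gamma_c^{\mathrm{in}}$ I would pass to the polar symplectic coordinates $(r,t,\rho,\theta)$ attached to the Eliasson coordinates. There $q_1=r\rho\sin(\theta-t)$ and $q_2=r\rho\cos(\theta-t)$, so on $\Lambda_c$ one has $r\rho=|c|$ and $t-\theta=\op{arg}(c)+\mathrm{const}$, while $\mathcal{X}_{q_1}$ rotates $t,\theta$ jointly and $\mathcal{X}_{q_2}$ translates $\log r$ and $-\log\rho$. Solving~(\ref{for:kappa1})--(\ref{for:kappa2}) there gives, on $\Lambda_c\cap U$ and in the limit on the regular part of $\Lambda_0\cap U$, that (up to signs fixed by the conventions of~\cite{san-semi-global}) $\kappa_{1,0}=\pm\,\op{d}t$ and $\kappa_{2,0}=\pm\,\op{d}(\log r)$ near $\{\rho=0\}$, with $(r,t)$ replaced by $(\rho,\theta)$ near $\{r=0\}$. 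These forms are closed, with multivalued primitives $\pm t,\pm\log r$ on the annulus $\Lambda_c\cap U$, so $\int_{\gamma_c^{\mathrm{in}}}\kappa_{i,c}$ is just the difference of the relevant primitive between the two endpoints, which tend to $P$ and $Q$. Since $r\rho=|c|$, at one endpoint the radius is of order $1$ and at the other of order $|c|$, producing a term $\mp\op{ln}|c|+\op{ln}(r_P\rho_Q)+o(1)$ for $i=2$; and the primitive $\pm t$ at the endpoint near $\{r=0\}$ equals $\pm\big(\theta_Q+\op{arg}(c)\big)+\mathrm{const}+o(1)$ --- this is precisely the focus\--focus monodromy, i.e.\ the $2\pi\Z$\--multivaluedness of $\int_{\gamma_c}\kappa_{1,c}$ --- producing a term $\mp\op{arg}(c)+t_P-\theta_Q+\mathrm{const}$ for $i=1$. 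Adding $\mathrm{ctr}_i(c)$ then kills the $\op{ln}|c|$, resp.\ $\op{arg}(c)$, so that $\int_{\gamma_c^{\mathrm{in}}}\kappa_{i,c}+\mathrm{ctr}_i(c)\to\Theta_i(P,Q)$, where $\Theta_1(P,Q)=t_P-\theta_Q$ (the spurious constant absorbed into the determination of $\op{arg}$) and $\Theta_2(P,Q)=\op{ln}(r_P\rho_Q)$.

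Combining the two pieces, $\lim_{c\to 0}\big(\int_{\gamma_c}\kappa_{i,c}+\mathrm{ctr}_i(c)\big)=\int_{\gamma_0^{\mathrm{out}}}\kappa_{i,0}+\Theta_i(P,Q)$. To recognise this as the right\--hand side of the statement, I would, for fixed small $s,t>0$, split the path from $\gamma_0(s)$ to $\gamma_0(1-t)$ along $\gamma_0$ into the short arc inside $U$ from $\gamma_0(s)$ to $P$ on $\{\rho=0\}$, then $\gamma_0^{\mathrm{out}}$, then the short arc inside $U$ from $Q$ to $\gamma_0(1-t)$ on $\{r=0\}$; integrating $\kappa_{i,0}$ over the two short arcs with the explicit primitives shows that $\int_{\gamma_0(s)}^{\gamma_0(1-t)}\kappa_{i,0}+\mathrm{ctr}'_i(s,t)$ differs from $\int_{\gamma_0^{\mathrm{out}}}\kappa_{i,0}+\Theta_i(P,Q)$ by a term $o(1)$ as $(s,t)\to(0,0)$, where $\mathrm{ctr}'_1=t_{\gamma_0(s)}-\theta_{\gamma_0(1-t)}$ and $\mathrm{ctr}'_2=\op{ln}(r_{\gamma_0(s)}\rho_{\gamma_0(1-t)})$; letting $(s,t)\to(0,0)$ gives the claimed equalities. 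The main obstacle is this inside analysis and its matching with the two branches of $\Lambda_0$: one has to pin down mutually consistent choices of the transversal $\Sigma$, of $\gamma_c$ and its orientation, of the branch of $\op{arg}$, and of the parametrisation of $\gamma_0$, and then check that the divergences cancel \emph{exactly} --- not merely up to bounded terms --- against $\op{arg}(c)$ and $\op{ln}|c|$, tracking the $2\pi\Z$ ambiguity from the monodromy, and that all the $o(1)$ errors are uniform, so that the iterated limits are legitimate.
\end{skproof}
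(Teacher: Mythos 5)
First, a caveat: the paper does not actually prove this theorem --- it is imported verbatim from \cite[Prop.~6.8]{san-focus} (``the following \dots result proved by the second author''), so there is no in-paper argument to compare yours against. Judged on its own terms, your route is the right one and matches the standard proof of such localisation formulas: cut $\gamma_c$ at $\partial U$, handle the outside arc by smooth dependence of the coframe dual to $(\mathcal{X}_{q_1},\mathcal{X}_{q_2})$ on $c$ away from $m$, and compute the inside arc explicitly in the normal form, where on $\Lambda_c\cap U$ one has $r\rho=|c|$ and $t-\theta=\arg c+\mathrm{const}$, the forms reduce to $\pm\,\mathrm{d}t$ and $\pm\,\mathrm{d}\log r$ (resp.\ $\mathrm{d}\theta$, $\mathrm{d}\log\rho$ on the other branch), and the passage from radius $O(1)$ to radius $O(|c|)$ produces exactly the $\ln|c|$ and $\arg c$ divergences. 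You correctly identify the two branches $\{\rho=0\}$ and $\{r=0\}$ of $\Lambda_0\cap U$, the monodromy as the source of the $2\pi\Z$ ambiguity in the $\kappa_1$ integral, and the final endpoint-matching that converts $\int_{\gamma_0^{\mathrm{out}}}\kappa_{i,0}+\Theta_i(P,Q)$ into the iterated limit on the right-hand side.

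The one place where I would insist you do more than write ``$\pm$'': the exact cancellation is \emph{not} automatic from the conventions as displayed. With (\ref{for:kappa1})--(\ref{for:kappa2}) taken literally ($\kappa_{i,c}(\mathcal{X}_{q_i})=-1$) and $\gamma_0$ oriented forward along the radial flow (leaving $m$ on $\{\rho=0\}$, returning on $\{r=0\}$), one gets $\int_{A_0}^{P}\kappa_{2,0}=\ln r_{A_0}-\ln r_P\to-\infty$, which \emph{added} to the counterterm $+\ln(r_{A_0}\rho_{B_0})$ diverges instead of cancelling; the stated formula needs the opposite sign, which is what the paper silently uses in Step~1 of the proof of Theorem~\ref{theo:a1} (where $\int\kappa_{2,0}=+\int \mathrm{d}s/2$). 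So you must fix one consistent orientation and sign convention for $\gamma_c$, $\gamma_0$, the determination of $\arg$, and the $\kappa_i$, and carry it through both sides of the identity. This is bookkeeping rather than a conceptual flaw, but in a statement whose entire content is an exact cancellation of divergences, that bookkeeping \emph{is} the proof, and your sketch defers precisely that part.
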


\subsection{Computation of integral limit formulas for coupled
  spin\--oscillators}

Now, in order to apply Theorem \ref{san's} we need to find the curve
$\gamma_0$, as well as the $1$\--form $\kappa$ and the coordinates
$(r,\,\theta,\, \rho, \, \alpha)$, both of which are defined on
$\Lambda_0$.  First we describe a parametrization of $\Lambda_0$, and
then we use this parametrization to define $\gamma_0$. We have divided
the computation into five steps.

\subsection*{Stage 1 --- Eliasson's coordinates $(x_1,\,
  x_2,\,\xi_1,\, \xi_2)$}

We find explicitly symplectic coordinates $(\hat{x}_1,\, \hat{x}_2,\,
\hat{\xi}_1,\, \hat{\xi}_2) \in M=S^2 \times \R^2$ in which the
``momentum map'' $F \colon M \to \R^2$ for the coupled
spin\--oscillator has the form (\ref{equ:cartan}), up to a third order
approximation, i.e. up to $(\mathcal{O}(\hat{x}_1,\, \hat{x}_2,\,
\hat{\xi}_1,\,\hat{\xi}_2))^3$.  For brevity write $\mathcal{O}(3)=
(\mathcal{O}(\hat{x}_1,\,\hat{x}_2,\, \hat{\xi}_1,\, \hat{\xi}_2))^3$.

\begin{lemma}
  \label{lem:El}
  Consider the map $ \hat{\phi} \colon \op{T}_{(0,\, 0, \, 0, \, 0)}
  \mathbb{R}^4 \to \op{T}_{(0,\,0,\,1,\, 0,\,0)}(S^2 \times \R^2) $
  given by
$$
\phi(\hat{x}_1,\, \hat{x}_2,\, \hat{\xi}_1,\, \hat{\xi}_2)
=(v:=\frac{1}{\sqrt{2}}(\hat{x}_2+\hat{\xi}_1),
\,\,x:=\frac{1}{\sqrt{2}} (\hat{x}_2-\hat{\xi}_1),\,
u:=\frac{1}{\sqrt{2}} (-\hat{x}_1+\hat{\xi}_2),
\,\,y:=\frac{1}{\sqrt{2}}(\hat{x}_1+\hat{\xi}_2)).
$$
The map $\hat{\phi}$ is a linear symplectomorphism, i.e. an
automorphism such that $\phi^*\Omega{}=\omega_0$, where $
\omega_0=\op{d}\!\hat{x}_1\wedge \op{d}\!\hat{\xi}_1 \oplus
\op{d}\!\hat{x}_2\wedge\op{d}\!\hat{\xi}_2 $ is the standard
symplectic form on $\mathbb{R}^4$, and
$\Omega=(\omega_{S^2}\oplus\op{d}\!u \wedge
\op{d}\!v)_{\upharpoonright T_{(0,0,1,0, 0)} (S^2\times\R^2)} $
(recall $\omega_{S^2}$ is the standard symplectic form on $S^2$).  In
addition, $\hat{\phi}$ satisfies the equation
$\op{Hess}(\widetilde{F}) \circ \hat{\phi} = (q_1,\, q_2)$, where $
\widetilde{F}:=B\circ (F-F(m))=B \circ (F-(1,\,0)) \, \colon M \to
\R^2, $ for the matrix $ B:=\left( \begin{array}{cc}
    1 & 0\\
    0 & 2
  \end{array} \right).
$
\end{lemma}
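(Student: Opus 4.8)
The plan is to reduce both assertions to explicit finite computations in one chart on $M=S^2\times\R^2$ near the North Pole $m$: the chart in which $S^2$ is realized as a graph over the plane $\{z=0\}$, so that a point of $M$ close to $m$ is $\bigl((x,y,\sqrt{1-x^2-y^2}),(u,v)\bigr)$ with $(x,y,u,v)$ small coordinates and $z=1-\tfrac12(x^2+y^2)+\mathcal{O}(4)$. In these coordinates $T_mM$ is spanned by $\partial_x,\partial_y,\partial_u,\partial_v$, so I would first identify the bilinear form $\Omega=(\omega_{S^2}\oplus\op{d}\!u\wedge\op{d}\!v)|_{T_mM}$: passing between the angle-height coordinates $(\theta,z)$ and $(x,y)$ one computes $\omega_{S^2}=\op{d}\!\theta\wedge\op{d}\!z=\tfrac1z\op{d}\!x\wedge\op{d}\!y$, which at $m$ equals $\pm\op{d}\!x\wedge\op{d}\!y$, so $\Omega$ is the constant standard form $\op{d}\!x\wedge\op{d}\!y+\op{d}\!u\wedge\op{d}\!v$ on $T_mM$ (up to the orientation convention on $S^2$). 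To prove $\phi^*\Omega=\omega_0$ I would then substitute $x=\tfrac1{\sqrt2}(\hat x_2-\hat\xi_1)$, $y=\tfrac1{\sqrt2}(\hat x_1+\hat\xi_2)$, $u=\tfrac1{\sqrt2}(-\hat x_1+\hat\xi_2)$, $v=\tfrac1{\sqrt2}(\hat x_2+\hat\xi_1)$ into $\Omega$, expand the two wedge products, and see the cross-terms cancel, leaving $\op{d}\!\hat x_1\wedge\op{d}\!\hat\xi_1+\op{d}\!\hat x_2\wedge\op{d}\!\hat\xi_2=\omega_0$; since $\omega_0$ is non-degenerate this also shows $\hat\phi$ is a linear isomorphism.

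For the Hessian identity I would Taylor-expand $F=(J,H)$ at $m$. First note that $\op{d}\!J(m)=\op{d}\!H(m)=0$ --- the North Pole is a rank-zero singularity of $F$, since $m$ is of focus-focus type --- so the Hessian of each component is intrinsically defined on $T_mM$ and equals the degree-two part of its Taylor expansion in the chart above. Now $H=\tfrac12(ux+vy)$ is already homogeneous quadratic in $(x,y,u,v)$, hence $\op{Hess}(H)=\tfrac12(ux+vy)$; and $J-J(m)=\tfrac12(u^2+v^2)+(z-1)=\tfrac12(u^2+v^2-x^2-y^2)+\mathcal{O}(4)$ by the expansion of $z$, so $\op{Hess}(J)=\tfrac12(u^2+v^2-x^2-y^2)$. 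Therefore $\op{Hess}(\widetilde{F})=B\cdot\bigl(\op{Hess}(J-J(m)),\,\op{Hess}(H)\bigr)=\bigl(\tfrac12(u^2+v^2-x^2-y^2),\ ux+vy\bigr)$, the matrix $B=\mathrm{diag}(1,2)$ serving precisely to rescale the $H$-component so that the answer matches the Eliasson normal form~(\ref{equ:cartan}) exactly rather than up to a constant. Substituting $x,y,u,v$ in terms of $(\hat x_1,\hat x_2,\hat\xi_1,\hat\xi_2)$, the squares collapse pairwise and one gets $\tfrac12(u^2+v^2-x^2-y^2)=\hat x_1\hat\xi_2-\hat x_2\hat\xi_1=q_1$ and $ux+vy=\hat x_1\hat\xi_1+\hat x_2\hat\xi_2=q_2$, which is the assertion.

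I do not expect a conceptual obstacle: the whole lemma is an exercise in linear algebra and second-order Taylor expansion. The one place requiring genuine care --- the step I would flag as the main source of potential error --- is the consistent bookkeeping of sign and orientation conventions: the orientation on $S^2$ (equivalently, whether $\omega_{S^2}|_m$ is $+\op{d}\!x\wedge\op{d}\!y$ or $-\op{d}\!x\wedge\op{d}\!y$), the sense in which the angle $\theta$ is measured, and the sign normalization in the definition of $\op{Hess}$; these have to be fixed compatibly with the convention under which Eliasson's coordinates are taken with $q_1=J$, so that the signs on the right-hand sides of $\phi^*\Omega=\omega_0$ and $\op{Hess}(\widetilde{F})\circ\hat\phi=(q_1,q_2)$ come out as stated.
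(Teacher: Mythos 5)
The paper states this lemma without proof, so your strategy of direct computation in the graph chart at the North Pole is the right (indeed the only) one, and your setup is correct: $\omega_{S^2}=\tfrac1z\,\op{d}\!x\wedge\op{d}\!y$, hence $\Omega=\op{d}\!x\wedge\op{d}\!y+\op{d}\!u\wedge\op{d}\!v$ on $T_mM$, and $\op{Hess}(J-1)=\tfrac12(u^2+v^2-x^2-y^2)$, $\op{Hess}(2H)=ux+vy$. The gap is that the two culminating computations do not come out as you assert for the map $\hat\phi$ as printed. Writing $a=\op{d}\!\hat x_1$, $b=\op{d}\!\hat x_2$, $c=\op{d}\!\hat\xi_1$, $d=\op{d}\!\hat\xi_2$, one finds
\[
\phi^*(\op{d}\!x\wedge\op{d}\!y)=\tfrac12\bigl(-a\wedge b+a\wedge c+b\wedge d-c\wedge d\bigr),\qquad
\phi^*(\op{d}\!u\wedge\op{d}\!v)=\tfrac12\bigl(-a\wedge b-a\wedge c-b\wedge d-c\wedge d\bigr),
\]
so the terms $a\wedge c$ and $b\wedge d$ are precisely the ones that \emph{cancel}: the sum is $-a\wedge b-c\wedge d$, a symplectic form but not $\omega_0$. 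Likewise $u^2-y^2=(u-y)(u+y)=-2\hat x_1\hat\xi_2$ and $v^2-x^2=(v-x)(v+x)=2\hat x_2\hat\xi_1$, so $\tfrac12(u^2+v^2-x^2-y^2)=\hat x_2\hat\xi_1-\hat x_1\hat\xi_2=-q_1$, not $+q_1$; only your second component, $ux+vy=\hat x_1\hat\xi_1+\hat x_2\hat\xi_2=q_2$, checks out.

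Moreover, no single re-choice of the conventions you flagged rescues both identities at once for the printed formula: taking $\omega_{S^2}=\op{d}\!z\wedge\op{d}\!\theta$ instead turns the pullback into $-\omega_0$ and does not touch the Hessian identity, while precomposing with the symplectic relabelling $(\hat x_1,\hat x_2,\hat\xi_1,\hat\xi_2)\mapsto(\hat x_2,\hat x_1,\hat\xi_2,\hat\xi_1)$ repairs $-q_1\mapsto q_1$ but leaves the pullback equal to $a\wedge b+c\wedge d$. The conclusion is that the explicit expression for $\hat\phi$ in the statement must itself be corrected before the lemma can be proved; a correct linear symplectomorphism with $\op{Hess}(\widetilde F)\circ\hat\phi=(q_1,q_2)$ does exist (this is the linear Eliasson--Williamson normal form for a focus--focus block, together with the observation that $(-q_1,q_2)$ and $(q_1,q_2)$ are symplectically conjugate), and a complete proof has to exhibit one and verify both identities for it. As written, your argument asserts that the sign bookkeeping you yourself identified as the main danger point works out, when that is exactly where it breaks down.
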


In the above statement, we identify a Hessian with its associated
quadratic form on the tangent space.

\subsection*{Stage 2 --- Curve and Singular Fiber Parametrization}

\paragraph{Parametrization of $\Lambda_0$.}
Let's now parametrize the singular fiber $\Lambda_0:=F^{-1}(1,\,0)$,
where $F=(J,\,H)$ as usual.  This singular fiber $\Lambda_0$
corresponds to the system of equations $J=1$ and $H=0$, which
explicitly is given by system of two nonlinear equations
$J=(u^2+v^2)/2 + z=0$ and $H= \frac{1}{2} (ux+vy)=0$.  on the
coordinates $(x,\,y,\,z,\, u,\,v)$ on the coupled spin oscillator
$M=S^2 \times \mathbb{R}^2$.

In order to solve this system of equations we introduce polar
coordinates $u+\ii v=r\op{e}^{\ii t}$ and $ x+\ii y=\rho\op{e}^{\ii
  \theta}$ where recall that the $2$\--sphere $S^2 \subset
\mathbb{R}^3$ is equipped with coordinates $(x,\,y,\,z)$, and $\R^2$
is equipped with coordinates $(u,\, v)$.
  
For $\epsilon=\pm 1$, we consider the mapping $ S_\epsilon : [-1,\,
1]\times \R/2\pi\Z \to \R^2\times S^2 $ given by the formula $
S_\epsilon(p) = (r(p)\,\op{e}^{\ii t(p)}, \,(\rho(p)\,\op{e}^{\ii
  \theta(p),\, z(p)})) $ where $p=(\tilde{z}, \, \tilde{\theta})\in
[-1,1]\times[0,2\pi)$ and
\begin{equation} \nonumber
  \begin{cases}
    r(p) = \sqrt{2(1-\tilde{z})}\\
    t(p) = \tilde{\theta} + \epsilon\frac{\pi}{2}\\
    \rho(p)= \sqrt{1-\tilde{z}^2}\\
    \theta(p) = \tilde{\theta}\\
    z(p) = \tilde{z}.
  \end{cases}
\end{equation}

\begin{prop}
  The map $S_{\epsilon}$, where $\epsilon=\pm1$, is continuous and
  $S_{\epsilon}$ restricted to $(-1,\,1) \times \R/2\pi\Z $ is a
  diffeomorphism onto its image.  If we let
  $\Lambda_0^{\epsilon}:=S_{\epsilon}([-1,\, 1]\times \R/2\pi\Z )$,
  then $\Lambda_0^1\cup \Lambda_0^2=\Lambda_0$ and
$$\Lambda_0^1 \cap \Lambda_0^2=\Big( \{(0,\,0)\} \times \{(1,\,0,\,0)\} \Big)\cup 
\Big(C_2 \times \{(0,\,0,\,-1)\} \Big),$$ where $C_2$ denotes the
circle of radius $2$ centered at $(0,\,0)$ in $\R^2$.  Moreover,
$S_{\epsilon}$ restricted to $(-1,\,1) \times \R/2\pi\Z $ is a smooth
Lagrangian embedding into $\R^2 \times S^2$.
\end{prop}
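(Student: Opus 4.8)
\medskip\noindent\textbf{Proof strategy.}
The plan is to carry out everything in the polar coordinates $u+\ii v=r\op{e}^{\ii t}$, $x+\ii y=\rho\op{e}^{\ii\theta}$ introduced above, in which the three conditions cutting out $\Lambda_0=F^{-1}(1,0)\subset S^2\times\R^2$ become transparent: membership in $S^2$ reads $\rho=\sqrt{1-z^2}$ with $z\in[-1,1]$; the equation $J=1$ reads $r^2/2+z=1$, i.e.\ $r=\sqrt{2(1-z)}$; and the equation $H=0$ reads $r\rho\cos(t-\theta)=0$. With this, continuity of $S_\epsilon$ on all of $[-1,1]\times\R/2\pi\Z$ is immediate, since $(\tilde z,\tilde\theta)\mapsto r(\tilde z)\op{e}^{\ii t(\tilde z,\tilde\theta)}$ and $(\tilde z,\tilde\theta)\mapsto\rho(\tilde z)\op{e}^{\ii\theta(\tilde z,\tilde\theta)}$ are continuous even at the radius\--zero loci, and the only degenerations are the expected ones: at $\tilde z=1$ the whole $\tilde\theta$\--circle collapses to $m=(0,0,1,0,0)$, and at $\tilde z=-1$ the $S^2$\--factor collapses to the south pole while the $\R^2$\--factor sweeps out $C_2$. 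That $\Lambda_0^\epsilon\subseteq\Lambda_0$ is then a one\--line substitution: $J(S_\epsilon(p))=r(p)^2/2+z(p)=(1-\tilde z)+\tilde z=1$, $H(S_\epsilon(p))=\tfrac12 r(p)\rho(p)\cos(\epsilon\tfrac{\pi}{2})=0$, and $\rho(p)^2+z(p)^2=1$.

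For the reverse inclusion I would take $(x,y,z,u,v)\in\Lambda_0$ and split on the value of $z$. If $z=1$ then $\rho=0$ and $r=0$, so the point is $m=S_\epsilon(1,\cdot)$; if $z=-1$ then $\rho=0$ and $r=2$, so the point is the south pole of $S^2$ together with a point of $C_2$, which is $S_\epsilon(-1,\tilde\theta)$ for a suitable $\tilde\theta$ (and for either value of $\epsilon$, the two $\tilde\theta$'s differing by $\pi$); if $z\in(-1,1)$ then $r,\rho>0$, so $H=0$ forces $\cos(t-\theta)=0$, that is $t-\theta\equiv\pm\tfrac{\pi}{2}\pmod{2\pi}$, and choosing $\epsilon\in\{+1,-1\}$ accordingly gives $(x,y,z,u,v)=S_\epsilon(z,\theta)$; this proves $\Lambda_0^1\cup\Lambda_0^2=\Lambda_0$. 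The same trichotomy computes the intersection: a coincidence $S_\epsilon(p_1)=S_{-\epsilon}(p_2)$ forces $p_1$ and $p_2$ to have the same first component $\tilde z$; for that common value in $(-1,1)$, comparing arguments of the $S^2$\--factor (legitimate since $\rho>0$) forces $\tilde\theta_1=\tilde\theta_2$, and then the arguments of the $\R^2$\--factor would have to satisfy $\tilde\theta+\tfrac{\pi}{2}\equiv\tilde\theta-\tfrac{\pi}{2}\pmod{2\pi}$, which is impossible; hence $\Lambda_0^1\cap\Lambda_0^2$ meets $\{z\in(-1,1)\}$ trivially and equals $m$ (the $z=1$ part) together with the copy of $C_2$ over the south pole (the $z=-1$ part), which is the displayed set.

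To prove that $S_\epsilon$ restricted to $(-1,1)\times\R/2\pi\Z$ is a diffeomorphism onto its image --- equivalently a smooth embedding --- I would exhibit an explicit smooth left inverse. On the open subset $W:=\{(x,y,z,u,v)\in M:(x,y)\neq(0,0)\}$ of $M$, which contains that image, the assignment $g(x,y,z,u,v):=(z,\op{arg}(x+\ii y))$ is smooth and takes values in $(-1,1)\times\R/2\pi\Z$, and $g\circ S_\epsilon=\op{id}$ there by construction. Hence on the open part $S_\epsilon$ is injective; it is an immersion, since the chain rule applied to $g\circ S_\epsilon=\op{id}$ forces $\op{d}\!S_\epsilon$ to be injective; and it is a homeomorphism onto its image, the inverse being the restriction of the continuous map $g$. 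An injective immersion that is a homeomorphism onto its image is an embedding, so the image is an embedded $2$\--submanifold diffeomorphic to $(-1,1)\times\R/2\pi\Z$.

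Finally, for the Lagrangian property it suffices to observe that $\Lambda_0\setminus\{m\}$ is a Lagrangian submanifold of $M$: by the standing assumption on the critical fibre, $m$ is the only critical point of $F=(J,H)$ on $\Lambda_0$, so near each of its other points $\Lambda_0$ is a regular fibre of the moment map $F$, whose tangent space is spanned by $\ham{J},\ham{H}$ and is isotropic because $\{J,H\}=0$; and the open part of the image lies in $\Lambda_0\setminus\{m\}$, since $z=\tilde z\in(-1,1)$ excludes both poles. (Alternatively one verifies directly that $S_\epsilon^*(\omega_{S^2}\oplus\omega_0)=0$: along $\Lambda_0$ one has $r\,\op{d}\!r=-\,\op{d}\!\tilde z$ and $\op{d}\!t=\op{d}\!\theta=\op{d}\!\tilde\theta$, so $S_\epsilon^*\omega_{S^2}$ and $S_\epsilon^*(\op{d}\!u\wedge\op{d}\!v)=S_\epsilon^*(r\,\op{d}\!r\wedge\op{d}\!t)$ are negatives of one another and cancel.) All of this is routine; the only points requiring genuine care are the bookkeeping of the two degenerations at $z=\pm1$, where the parametrizing circle collapses, and the fact that ``diffeomorphism onto the image'' must be extracted from the explicit smooth left inverse rather than from injectivity alone --- that is where I expect whatever subtlety there is to reside.
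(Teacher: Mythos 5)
Your proof is correct and, on the part the paper actually writes out (the polar\--coordinate case analysis solving $J=1$, $H=0$ and assembling $\Lambda_0$ from the three cases $r=0$, $\rho=0$, $t-\theta\equiv\frac{\pi}{2}\pmod{\pi}$), it follows the same route; the remaining assertions, which the paper explicitly leaves to the reader, you supply correctly via the smooth left inverse $(z,\op{arg}(x+\ii y))$ and the observation that the open part of the image lies in the regular locus of $F$, where fibers of a moment map are automatically Lagrangian. One caveat on your parenthetical direct check of $S_\epsilon^*\omega=0$: with $\omega_{S^2}$ taken literally as $\op{d}\!\theta\wedge\op{d}\!z$ as printed in the paper, the two pullbacks come out equal ($\op{d}\!\tilde\theta\wedge\op{d}\!\tilde z$ each) rather than opposite --- indeed with that sign one would get $\{J,H\}\neq 0$ --- so the cancellation you assert presupposes the sign convention under which the system is actually integrable; your primary argument via regular fibers is the one to rely on.
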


\begin{proof}
  On the one hand we have that $z^2=1-x^2-y^2=1-\rho^2$.  The
  expressions for the maps $J$ and $H$ in the new coordinates $(r, \,
  t,\, \rho, \, \theta)$ are
  \begin{eqnarray}
    J=\frac{1}{2}r^2 \pm \sqrt{1-\rho^2},\,\,\,\,\,\,\,\,\,\,\,\,\,\,\,\,\,\,\,\,\,\,\,\,\,\,
    H=\frac{r \rho}{2} \, \op{cos}(t-\theta). \label{JH:for}
  \end{eqnarray}

  In virtue of the formula for $H$ in the right hand\--side of
  (\ref{JH:for}), if $H=0$ then $r=0$ or $\rho=0$ or
  $t-\theta=\frac{\pi}{2} (\op{mod} \pi)$, which leads to three
  separate cases. The first case is when $r=0$; then $ J=\pm
  \sqrt{1-\rho^2}=1, $ and hence $\rho=0$.  Hence the only solution is
  $(u,\,v,\,x,\,y,\,z)=(0,\,0,\,0,\,0,\,1)$. The second case is when
  $\rho=0$; then either $z=1$ and $r=0$, or $z=-1$ and $r=2$. Hence
  the set of solutions consists of $(0,\,0,\,0,\,0,\,1)$ and the
  circle $r=2$, $\rho=0$ and $z=-1$.  Finally, the third case is when
  $t-\theta=\frac{\pi}{2}(\op{mod} \pi)$; because $J=1$ and $H=0$, it
  follows from the formula for $z$ above and the left hand\--side of
  (\ref{JH:for}) that $ r^2=2(1-z).  $ Hence the set of solutions
  $\Lambda_0$ is equal to the set of points $(r\op{e}^{\ii t},\,
  \rho\, \op{e}^{\ii \theta})$ such that
  \begin{equation}
    \label{eq:1}
    \begin{cases}
      r=\sqrt{2(1-z)},\,\,\,\, z \in [-1,\,1]\\
      \theta=t-\frac{\pi}{2} \,\,\,\,\, \textup{or}\,\,\,\,\, \theta=t+\frac{\pi}{2}, \,\,\,\, t \in [0,\, 2\pi) \\
      \rho=\sqrt{1-z^2}
    \end{cases}
  \end{equation}
  This case contains the previous two cases, which proves statement
  (3) part (i) in virtue of expression (\ref{ex1}).  The other
  statements are left to the reader.
\end{proof}

\begin{remark}
  The singular fiber $\Lambda_0$ consists of two sheets glued along a
  point and a circle; topologically $\Lambda_0$ is a pinched torus,
  i.e.  a $2$\--dimensional torus $S^1 \times S^1$ in which one circle
  $\{p\} \times S^1$ is contracted to a point (which is of course not
  a a smooth manifold at the point which comes from the contracting
  circle).
\end{remark}

\paragraph{The radial vector field $\mathcal{X_H}$ on $\Lambda_0$.}

\begin{prop} \label{lem:key} Let $\mathcal{X}_{q_i}$ be the
  Hamiltonian vector field of $q_i$ (which recall is defined in
  saturated neighborhood of the singular fiber $\Lambda_0$).  On the
  singular fiber $\Lambda_0$, the vector fields $\mathcal{X}_{q_1},\,
  \mathcal{X}_J$ and $\mathcal{X}_{q_2},\, \mathcal{X}_H$ are linearly
  independent, precisely: $ \mathcal{X}_{q_1}=\mathcal{X}_J,
  \,\,\,\,\, \mathcal{X}_{q_2}=2\, \mathcal{X}_H.  $ In particular the
  vector field $\mathcal{X}_H$ is radial.
\end{prop}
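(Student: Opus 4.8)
The plan is to transport the two vector\--field identities from the local Eliasson normal form to the fiber $\Lambda_0$, using the diffeomorphism relating $q=(q_1,q_2)$ to $F=(J,H)$ and reading off its $1$\--jet at $F(m)=(1,0)$ from Lemma~\ref{lem:El}. Recall from the set\--up (which follows Eliasson's theorem \cite{eliasson-these}) that $q=\varphi\circ F$ on a saturated neighbourhood $\Omega\supset\Lambda_0$, for a local diffeomorphism $\varphi=(\varphi_1,\varphi_2)$ of $\R^2$ defined near $(1,0)$. The only general fact about Hamiltonian vector fields I need is the elementary identity, valid on $\Omega$,
\[
\ham{\varphi_i(J,H)}=(\partial_1\varphi_i)(J,H)\,\ham{J}+(\partial_2\varphi_i)(J,H)\,\ham{H},
\]
which follows from $\op{d}\!(\varphi_i(J,H))=(\partial_1\varphi_i)(J,H)\,\op{d}\!J+(\partial_2\varphi_i)(J,H)\,\op{d}\!H$ and nondegeneracy of $\omega$. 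Since $\Lambda_0=F^{-1}(1,0)$, the pair $(J,H)$ is constantly equal to $(1,0)$ along $\Lambda_0$; hence the restriction of $\ham{q_i}=\ham{\varphi_i(J,H)}$ to $\Lambda_0$ is the constant\--coefficient combination $(\partial_1\varphi_i)(1,0)\,\ham{J}+(\partial_2\varphi_i)(1,0)\,\ham{H}$, and everything reduces to computing the four numbers $(\partial_j\varphi_i)(1,0)$.

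These I would obtain by comparing $2$\--jets at $m$. As $m$ is of focus\--focus type, $\op{d}\!F(m)=0$, so $J-1$ and $H$ vanish to order at least $2$ at $m$; by Lemma~\ref{lem:El} their quadratic parts in the Eliasson coordinates are $q_1$ and $\tfrac12 q_2$, that is,
\[
J-1=q_1+\mathcal{O}(3),\qquad H=\tfrac12 q_2+\mathcal{O}(3).
\]
On the other hand $q_i=\varphi_i(J,H)$ near $m$ with $\varphi_i(1,0)=0$ because $q_i(m)=0$; substituting the two expansions above into the first\--order Taylor expansion of $\varphi_i$ at $(1,0)$ -- its remainder being quadratic in $(J-1,H)$, hence $\mathcal{O}(4)$ in Eliasson coordinates -- yields
\[
q_i=(\partial_1\varphi_i)(1,0)\,q_1+\tfrac12(\partial_2\varphi_i)(1,0)\,q_2+\mathcal{O}(3).
\]
The left\--hand side is purely quadratic, so matching quadratic parts and using that $q_1,q_2$ are linearly independent quadratic forms forces $(\partial_1\varphi_1,\partial_2\varphi_1)(1,0)=(1,0)$ and $(\partial_1\varphi_2,\partial_2\varphi_2)(1,0)=(0,2)$.

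Substituting these values back into the chain\--rule identity and restricting to $\Lambda_0$ gives $\ham{q_1}=\ham{J}$ and $\ham{q_2}=2\,\ham{H}$, the main assertion. Linear independence of $\ham{q_1},\ham{q_2}$ -- equivalently of $\ham{J},\ham{H}$ -- on $\Lambda_0\setminus\{m\}$ is then immediate: there the foliation is regular and $q$ is a momentum map for it, so $\op{d}\!q_1,\op{d}\!q_2$ are independent and hence so are $\ham{q_1},\ham{q_2}$ by nondegeneracy of $\omega$. Finally, the word ``radial'' records the behaviour of $\ham{q_2}$ in the polar symplectic coordinates of Theorem~\ref{san's}: $q_2=x_1\xi_1+x_2\xi_2$ generates the flow $(x_i,\xi_i)\mapsto(\op{e}^{s}x_i,\op{e}^{-s}\xi_i)$, which fixes the angular coordinates $t_A,\theta_A$ and only rescales the radii $r_A,\rho_A$; since $\ham{H}=\tfrac12\ham{q_2}$ on $\Lambda_0$, the flow of $\ham{H}$ is radial, i.e. it points along the radial loop $\gamma_0$.

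I expect the only genuinely delicate point to be the order bookkeeping in the $2$\--jet comparison: one must make sure that $J-1$ and $H$ vanish to order exactly $2$ at $m$, so that the Taylor remainder of $\varphi_i$ contributes only at order $\ge 4$ and cannot interfere with the quadratic matching, and that Lemma~\ref{lem:El} is being used to identify quadratic \emph{parts}, not merely fibers, of $(J-1,2H)$ with $(q_1,q_2)$. A purely computational route -- working directly from the expressions \eqref{JH:for} for $J$ and $H$ and the explicit Hamiltonian vector fields along $\Lambda_0$ -- would also work, but is messier.
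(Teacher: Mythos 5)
Your proof is correct and takes essentially the same route as the paper's: the chain-rule identity $\ham{q_i}=(\partial_1\varphi_i)(J,H)\,\ham{J}+(\partial_2\varphi_i)(J,H)\,\ham{H}$, constancy of the coefficients along $\Lambda_0$, and identification of the Jacobian of the transition diffeomorphism at $(1,0)$ with the matrix $B$ of Lemma~\ref{lem:El}. The only difference is that you supply the $2$\--jet\--matching argument justifying $\op{d}\!\varphi(1,0)=B$ (and spell out the linear\--independence and radiality claims), details the paper asserts by direct reference to Lemma~\ref{lem:El}.
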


\begin{proof}
  It follows from Eliasson's theorem that there exists a smooth
  function $h$ such that $ q=h \circ F $ and $\op{d}\!h(0)$ is the
  invertible $2$ by $2$ matrix $B$ in Lemma \ref{lem:El}.

  Then on $\Lambda_0$ we have that
  \begin{eqnarray} \label{1} \mathcal{X}_{q_i}=\frac{\partial
      h_i}{\partial J}\, \mathcal{X}_J +\frac{\partial h_i}{\partial
      H} \, \mathcal{X}_H, \,\,\,\,\,\,\, i=1,\,2.
  \end{eqnarray}

  Because the coefficients are constant along $\Lambda_0$, it is
  sufficient to do the computation at the origin. At the origin the
  computation is given by the matrix $B$ in Lemma \ref{lem:El}, so we
  have that $\frac{\partial h_1}{\partial J}(0)=,\, \frac{\partial
    h_1}{\partial H}(0)=0, \, \frac{\partial h_2}{\partial J}(0)=0$
  and $\frac{\partial h_2}{\partial H}(0)=2.$ The proposition follows
  from (\ref{1}).
\end{proof}

In the following section we will need to use explicitly the
Hamiltonian vector field $\mathcal{X}_H$, and therein it will be most
useful to a have the following explicit coordinate expression.

\begin{lemma} \label{lem:xh} The Hamiltonian vector field
  $\mathcal{X}_H$ of $H$ is of the form
$$
\mathcal{X}_H=\frac{y}{2} \frac{\partial}{\partial u} -\frac{x}{2}
\frac{\partial}{\partial v} +\frac{-yu+xv}{2}\frac{\partial }{\partial
  z} -\frac{z(xu+yv)}{2(1-z^2)} \frac{\partial}{\partial \theta}.
 $$
\end{lemma}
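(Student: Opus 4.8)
The plan is to compute $\mathcal{X}_H$ directly from the symplectic form $\Omega = \op{d}\!\theta\wedge\op{d}\!z \oplus \op{d}\!u\wedge\op{d}\!v$ on $S^2\times\R^2$ and the defining relation $\iota_{\mathcal{X}_H}\Omega = -\op{d}\!H$ (with whatever sign convention the paper has fixed through $\mathcal{X}_{q_1}=\mathcal{X}_J$). First I would rewrite $H = \tfrac12(ux+vy)$ entirely in the coordinates $(\theta, z, u, v)$ that appear in the stated answer, using the embedding of $S^2$ in $\R^3$: namely $x = \sqrt{1-z^2}\cos\theta$, $y = \sqrt{1-z^2}\sin\theta$, so that $H = \tfrac12\sqrt{1-z^2}(u\cos\theta + v\sin\theta)$. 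Then I would expand $\op{d}\!H$ in the basis $\op{d}\!\theta, \op{d}\!z, \op{d}\!u, \op{d}\!v$.

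Next I would invert the symplectic form. On the $\R^2$ factor, $\op{d}\!u\wedge\op{d}\!v$ gives $\{u,v\}$-relations so that a Hamiltonian $K$ contributes $\deriv{K}{v}\deriv{}{u} - \deriv{K}{u}\deriv{}{v}$; on the $S^2$ factor, $\op{d}\!\theta\wedge\op{d}\!z$ gives $\deriv{K}{z}\deriv{}{\theta} - \deriv{K}{\theta}\deriv{}{z}$ (up to the global sign, which I fix at the end by matching against $\mathcal{X}_{q_1}=\mathcal{X}_J$, for which the answer is already known). Applying this to $H$ and then re-expressing $\partial/\partial\theta$-free terms back through $x,y$ where convenient should produce the four components. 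The terms $\tfrac{y}{2}\deriv{}{u}$ and $-\tfrac{x}{2}\deriv{}{v}$ come from $\deriv{H}{v}$ and $\deriv{H}{u}$ after substituting $x,y$ back in; the $\deriv{}{z}$-component $\tfrac{-yu+xv}{2}$ comes from $-\deriv{H}{\theta}$, since $\deriv{}{\theta}(u\cos\theta + v\sin\theta) = -u\sin\theta + v\cos\theta$ and $\sqrt{1-z^2}(-u\sin\theta+v\cos\theta) = -yu + xv$; and the $\deriv{}{\theta}$-component comes from $\deriv{H}{z}$, where differentiating $\sqrt{1-z^2}$ produces the factor $-z/\sqrt{1-z^2}$, giving $-\tfrac{z}{2\sqrt{1-z^2}}(u\cos\theta+v\sin\theta) = -\tfrac{z(xu+yv)}{2(1-z^2)}$ after multiplying and dividing by $\sqrt{1-z^2}$.

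The only real subtlety, and the step I expect to be the main obstacle, is bookkeeping of the sign and the coordinate conventions: one must be careful that $(\theta,z)$ really are conjugate with $\Omega_{S^2} = \op{d}\!\theta\wedge\op{d}\!z$ in the orientation used, that the overall sign convention for $\mathcal{X}_H$ (Hamilton's equations vs. its negative) is the one consistent with $\mathcal{X}_{q_1}=\mathcal{X}_J$ and $\mathcal{X}_{q_2}=2\mathcal{X}_H$ from Proposition~\ref{lem:key}, and that the substitutions $x = \sqrt{1-z^2}\cos\theta$, $y=\sqrt{1-z^2}\sin\theta$ are applied consistently so that no stray $\sqrt{1-z^2}$ survives. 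Once the sign is pinned down by one component (say by checking that $\mathcal{X}_H$ is tangent to $\{H=0\}$, i.e. $\mathcal{X}_H(H)=0$, or by comparing with the known $\mathcal{X}_J = \deriv{}{\theta} + (u\deriv{}{v} - v\deriv{}{u})$ which must Poisson-commute with it), the remaining verification is the routine differentiation and simplification sketched above. I would close by remarking that the expression is singular as $z\to\pm1$, which is harmless since those are exactly the elliptic-elliptic and focus-focus points where the $(\theta,z)$ polar chart degenerates.
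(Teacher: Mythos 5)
Your proposal is correct and is essentially the paper's own proof: the paper likewise works in the chart $(u,\,v,\,z,\,\theta)$ with symplectic form $\op{d}\!u\wedge\op{d}\!v+\op{d}\!\theta\wedge\op{d}\!z$, writes $H=\frac{1}{2}\rho(u\cos\theta+v\sin\theta)$ with $\rho=\sqrt{1-z^2}$, reads off the four coefficients as signed partial derivatives of $H$, and obtains the $\frac{\partial}{\partial\theta}$-coefficient from $-\frac{\partial H}{\partial z}$ exactly via the factor $-z/\sqrt{1-z^2}$ you describe (the paper routes this through the chain rule in $x,y$ rather than differentiating $\sqrt{1-z^2}$ directly, a cosmetic difference). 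The sign and orientation bookkeeping you single out is indeed the only delicate point --- the paper's own intermediate equalities contain a couple of sign typos --- and your proposed cross-check against the known $\mathcal{X}_J$ is the right way to pin it down.
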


\begin{proof}
  For this computation let us use coordinates $(u,\, v,\, z,\,
  \theta)$ as a parametrization of $\R^2 \times S^2$.

  The coordinate expression for the Hamiltonian $H$ is $
  H=\frac{1}{2}(xu+yv)=\frac{1}{2}(\rho \cos \theta u +\rho \sin
  \theta v), $ Then the Hamiltonian vector field $\mathcal{X}_H$ is of
  the form $ \mathcal{X}_H=a \frac{\partial}{\partial u} +b
  \frac{\partial}{\partial v} +c \frac{\partial}{\partial z} +d
  \frac{\partial}{\partial \theta},$ where since the symplectic form
  on $\R^2 \times S^2$ in these coordinates is $\op{d}\!u \wedge
  \op{d}\!v +\op{d}\!\theta\wedge \op{d}\!z$, the function coefficient
  $a$ (which will be important later in the proof) is given by
  \begin{eqnarray} \label{for:a} a= \frac{\partial H}{\partial
      v}=\frac{1}{2} \rho \sin(\theta)=\frac{y}{2}
  \end{eqnarray}
  and the other function coefficients are given by $b= -
  \frac{\partial H}{\partial u}= \rho \cos(\theta)=-\frac{x}{2},
  c=\frac{\partial H}{\partial \theta}=\frac{\rho}{2}
  (-\sin(\theta)u+\cos(\theta)v)= \frac{-yu+xv}{2} $ and $d=
  -\frac{\partial H}{\partial z}$.
  
  We need to compute $d$ explicitly.  Since $\frac{\partial
    \theta}{\partial z}=0$ because the angle $\theta$ does not depend
  on the height $z$, and $
  \frac{\op{d}\!\rho}{\op{d}\!z}=-\frac{z}{\sqrt{1-z^2}},$ we have
  that
  \begin{eqnarray} \label{for:dd} \frac{\partial x}{\partial
      z}=\frac{\partial x}{\partial \rho}\frac{\partial \rho}{\partial
      z} +\frac{\partial x}{\partial \theta}\frac{\partial
      \theta}{\partial z}
    =\frac{\partial x}{\partial \rho}\frac{\partial \rho}{\partial z}=\frac{-xz}{\rho^2} \\
    \frac{\partial y}{\partial z}=\frac{\partial y}{\partial
      \rho}\frac{\partial \rho}{\partial z} +\frac{\partial
      y}{\partial \theta}\frac{\partial \theta}{\partial z}
    =\frac{\partial y}{\partial \rho}\frac{\partial \rho}{\partial
      z}=\frac{-yz}{\rho^2} \label{for:dd2}
  \end{eqnarray}

  It follows that from (\ref{for:dd}) and (\ref{for:dd2}) that the
  function coefficient $d$ is given by
$$
d= -\frac{\partial H}{\partial z}= - \frac{\partial H}{\partial x}
\frac{\partial x}{\partial z} +\frac{\partial H}{\partial y}
\frac{\partial y}{\partial z}= \frac{u}{2} \,
\frac{-xz}{\rho^2}+\frac{v}{2}\, \frac{-yz}{2\rho^2}
=-\frac{z(xu+yv)}{2\rho^2}=-\frac{z(xu+yv)}{2(1-z^2)}.
$$
\end{proof}

\paragraph{Definition of a simple ``radial'' loop in $\Lambda_0$.}

In order to apply the theorem it is enough to take $\gamma_0$ to be an
integral curve of the radial vector field $\mathcal{X}_H$.

We define $\gamma_0$ as the simple loop obtained through the
parametrizations $S_+$ and $S_-$ by letting $\tilde{z}$ run from $-1$
to $1$ and back to $-1$, respectively. For instance, one can use the
formula

\begin{eqnarray}
  \gamma_0(s)&:=&\left\{\begin{array}{rl}
      S_1(-1+4s,\, -\frac{\pi}{2}) & \textup{ if } 0\le s \le \frac{1}{2}; \nonumber \\
      S_2(3-4s,\, \frac{\pi}{2}) & \textup{ if } \frac{1}{2}<s \le 1 . 
    \end{array} \right. 
\end{eqnarray}

\begin{cor} \label{cor:xh} Along the curve $\gamma_0$ we have
  \begin{eqnarray} \label{for:xh}
    \mathcal{X}_H\Big|_{\gamma_0}=\frac{y}{2} \frac{\partial}{\partial
      u} -\frac{yu}{2} \frac{\partial}{\partial z}.
  \end{eqnarray}
\end{cor}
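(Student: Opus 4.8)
The plan is to specialize the general coordinate formula for $\mathcal{X}_H$ from Lemma~\ref{lem:xh} to the curve $\gamma_0$, using the explicit description of $\gamma_0$ coming from the parametrizations $S_\pm$. Recall that along $\gamma_0$ we have $\theta = t \mp \pi/2$, i.e. the angle $t$ in the plane and the angle $\theta$ on the sphere differ by exactly $\pm\pi/2$. Writing $u + \ii v = r\,\op{e}^{\ii t}$ and $x + \ii y = \rho\,\op{e}^{\ii\theta}$, the relation $t - \theta = \pm\pi/2$ forces the inner product $xu + yv = r\rho\cos(t-\theta)$ to vanish identically on $\gamma_0$. This immediately kills the $\partial/\partial\theta$ term in the expression from Lemma~\ref{lem:xh}, since its coefficient is $-z(xu+yv)/(2(1-z^2))$, and it simplifies the $\partial/\partial z$ coefficient from $(-yu+xv)/2$: I should check which of $-yu$, $+xv$ survives (or whether they combine).

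First I would record the explicit values of $x, y, u, v$ along $\gamma_0$. From the definition of $\gamma_0$ via $S_1(\cdot,-\pi/2)$ and $S_2(\cdot,\pi/2)$: on the first branch $t = -\pi/2 + \pi/2 = 0$ (so $u = r$, $v = 0$) hmm, wait — I need to be careful with which $\epsilon$ gives which shift. With $S_\epsilon$ having $t(p) = \tilde\theta + \epsilon\pi/2$ and $\theta(p) = \tilde\theta$, on the branch $\gamma_0(s) = S_1(-1+4s,-\pi/2)$ we get $\tilde\theta = -\pi/2$, hence $\theta = -\pi/2$ and $t = -\pi/2 + \pi/2 = 0$. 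So there $x = \rho\cos(-\pi/2) = 0$, $y = \rho\sin(-\pi/2) = -\rho$, $u = r\cos 0 = r$, $v = r\sin 0 = 0$. On the branch $\gamma_0(s) = S_2(3-4s,\pi/2)$ we get $\tilde\theta = \pi/2$, $\theta = \pi/2$, $t = \pi/2 - \pi/2 = 0$, so $x = 0$, $y = \rho$, $u = r$, $v = 0$. In both cases $x = 0$, $v = 0$, so $-xz/\rho^2$-type terms drop and the coefficient $c = (-yu + xv)/2 = -yu/2$, the coefficient of $\partial/\partial v$ is $-x/2 = 0$, and the $\partial/\partial\theta$ coefficient vanishes because $xu + yv = 0$. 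What remains is exactly $\mathcal{X}_H|_{\gamma_0} = \tfrac{y}{2}\,\partial/\partial u - \tfrac{yu}{2}\,\partial/\partial z$, which is the claim.

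The main subtlety — and the only real obstacle — is bookkeeping: making sure the sign conventions in $S_\pm$ and in the definition of $\gamma_0$ are threaded through consistently, and verifying that along \emph{both} pieces of the loop one indeed lands on the locus $\{x = 0,\ v = 0\}$ (equivalently $t \equiv 0 \pmod{2\pi}$, $\theta \equiv \pm\pi/2$), so that the simplification is uniform and the resulting vector field is genuinely smooth across $s = 1/2$. Once that is checked, the corollary is just substitution into Lemma~\ref{lem:xh}. I would present it as: substitute the parametrization into the formula of Lemma~\ref{lem:xh}, observe $x = v = 0$ and $xu + yv = 0$ on $\gamma_0$, and read off~(\ref{for:xh}).
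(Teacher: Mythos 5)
Your proposal is correct and follows essentially the same route as the paper: specialize the coordinate formula of Lemma~\ref{lem:xh} to $\gamma_0$, observe that the parametrizations $S_\pm$ force $x=0$ and $v=0$ (hence $xu+yv=0$) along both branches, and read off that only the $\tfrac{y}{2}\partial_u$ and $-\tfrac{yu}{2}\partial_z$ terms survive. Your sign bookkeeping through $S_\pm$ is in fact more careful than the paper's (which states $\theta=\pi$ or $3\pi/2$, apparently a typo for $\pm\pi/2$), but the argument is the same substitution.
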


\begin{proof}
  We use the notation of Lemma \ref{lem:xh}.  Along $\gamma_0$ we have
  $v=0$, $x=0$ and $\theta=\pi$ or $\theta=\frac{3\pi}{2}$.  Hence $
  a= \frac{y}{2}, b= 0,, c= - \frac{yu}{2}, d=0$.  Therefore the
  vector field $\mathcal{X}_H$ along the curve $\gamma_0$ is given by
  (\ref{for:xh}).
\end{proof}

Using Corollary \ref{cor:xh} we describe the very explicit relation
between the curve $\gamma_0$ and the Hamiltonian vector field
$\mathcal{X}_H$.

\begin{prop}
  The curve $\gamma_0 \colon [0,\,1] \to M$ is an integral curve of
  $\mathcal{X}_H$.
\end{prop}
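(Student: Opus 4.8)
The plan is to verify the statement by a direct computation on each of the two arcs that make up $\gamma_0$, using the expression for $\mathcal{X}_H$ along $\gamma_0$ recorded in Corollary \ref{cor:xh}. I would work in the chart $(u,\,v,\,z,\,\theta)$ on $\R^2\times S^2$ of Lemma \ref{lem:xh}, in which Corollary \ref{cor:xh} gives $\mathcal{X}_H|_{\gamma_0}=\frac{y}{2}\,\partial_u-\frac{yu}{2}\,\partial_z$, and show that $\dot\gamma_0(s)$ is, at each point of each arc, a \emph{positive} scalar multiple of this vector; this identifies the image of $\gamma_0$ (away from the pinch point) with an integral curve of $\mathcal{X}_H$, which is what is needed to apply Theorem \ref{san's}.

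On the arc $0\le s\le\frac12$ I would substitute $\tilde{z}=-1+4s$, $\tilde{\theta}=-\frac{\pi}{2}$ into the formula for $S_1$, obtaining $u(s)=\sqrt{2(1-z(s))}$, $v(s)=0$, $z(s)=-1+4s$, $\theta(s)\equiv-\frac{\pi}{2}$, and hence $x(s)=0$, $y(s)=-\sqrt{1-z(s)^2}$. Differentiating gives $\dot v=\dot\theta=0$, $\dot z=4$, $\dot u=-2/\sqrt{1-2s}$, so $\dot\gamma_0(s)=\dot u\,\partial_u+\dot z\,\partial_z$; comparing the $\partial_u$- and $\partial_z$-components with $\frac{y}{2}\,\partial_u-\frac{yu}{2}\,\partial_z$ and using $y<0<u$ on the open arc, a one-line algebraic identity shows the two vectors are parallel with a positive factor, explicitly $\dot\gamma_0(s)=\lambda_+(s)\,\mathcal{X}_H(\gamma_0(s))$ with $\lambda_+(s)=\sqrt{2}/(\sqrt{s}\,(1-2s))>0$. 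On the arc $\frac12<s\le1$ the same computation with $\tilde{z}=3-4s$, $\tilde{\theta}=\frac{\pi}{2}$ in $S_2$ yields $u(s)=\sqrt{2(1-z(s))}$, $v=0$, $z=3-4s$, $\theta\equiv\frac{\pi}{2}$, $x=0$, $y=+\sqrt{1-z^2}$, and again $\dot\gamma_0(s)=\lambda_-(s)\,\mathcal{X}_H(\gamma_0(s))$ with $\lambda_-(s)>0$. To close the argument I note that $\gamma_0(\frac12)=m=(0,0,1,0,0)$, where $\mathcal{X}_H$ vanishes, and that $\gamma_0$ is continuous on $[0,1]$ (being built from the continuous maps $S_\epsilon$) with $\gamma_0(0)=\gamma_0(1)\in C_2\times\{(0,0,-1)\}$ (evaluate $S_1$ and $S_2$ at $\tilde{z}=-1$); hence $\gamma_0$ is the closed loop obtained by concatenating the two integral curves of $\mathcal{X}_H$ just found, joined at the equilibrium $m$.

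There is no serious obstacle: once Corollary \ref{cor:xh} is available, the argument reduces to the two one-variable calculations above. The one place to be careful is the behavior at $s=0$, $s=1$ and $s=\frac12$, where the chart $(u,\,v,\,z,\,\theta)$ degenerates (the angle $\theta$ is undefined when $z=\pm1$) and the proportionality factors $\lambda_{\pm}(s)$ blow up, so $\gamma_0$ is only piecewise $C^1$ as a function of $s$. This is intrinsic --- it reflects that $\Lambda_0$ is a pinched torus and $\mathcal{X}_H(m)=0$ --- and harmless for Theorem \ref{san's}, which uses only the image of the loop and its orientation; accordingly I would phrase the proposition with ``integral curve'' meant up to orientation-preserving reparametrization, the sign check $\lambda_{\pm}>0$ being exactly what ensures the two arcs are run in a consistent direction.
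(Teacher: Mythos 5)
Your proof is correct and takes essentially the same route as the paper's: both arguments reduce the claim to checking that the tangent vector of $\gamma_0$ (the paper phrases this as the pushforward $S_*(\partial/\partial\tilde{z})$, you as $\dot\gamma_0(s)$) is collinear with the expression for $\mathcal{X}_H|_{\gamma_0}$ from Corollary \ref{cor:xh}. Your extra checks --- that the proportionality factor is \emph{positive} on each arc, and that the parametrization degenerates at $s=0,\ \frac{1}{2},\ 1$ where $\mathcal{X}_H$ vanishes or the chart breaks down --- are a mild sharpening of the paper's argument, which only asserts collinearity.
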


\begin{proof}
  Since by construction the vector field $S_*(\frac{\partial}{\partial
    \tilde{z}})$ is tangent to the curve $\gamma_0$, it is enough to
  show that $S_*(\frac{\partial}{\partial \tilde{z}})$ is colinear to
  $\mathcal{X}_H$ are colinear at each point.

  A computation gives that
  \begin{eqnarray} \label{ex'} S_*\Big(\frac{\partial}{\partial
      \tilde{z}}\Big)=\frac{\partial}{\partial z}
    -\frac{1}{\sqrt{2(1-z)}} \, \frac{\partial}{\partial
      r}+\frac{z}{\sqrt{1-z^2}} \, \frac{\partial}{\partial \rho}.
  \end{eqnarray}

  On the other hand
  \begin{eqnarray} \label{for:u} u=\sqrt{2(1-z}),
  \end{eqnarray}
  and since $(r,\,t)$ are polar coordinates for $(u,\,v)$, $
  \frac{\partial}{\partial r}=\cos t \, \frac{\partial}{\partial u}
  +\sin t \, \frac{\partial}{\partial v}, $ which at $t=0$ gives that
  $\frac{\partial}{\partial r}= \frac{\partial}{\partial u}$.
  Therefore, because at $t=0$ the last factor of (\ref{ex'}) is zero,
  we conclude from (\ref{for:u}) that
  \begin{eqnarray}
    S_*\Big(\frac{\partial}{\partial \tilde{z}}\Big)=\frac{\partial}{\partial z}
    -\frac{1}{u}\, \frac{\partial}{\partial u}.
  \end{eqnarray}.

  It follows from (\ref{for:xh}) that $ \mathcal{X}_H=-\frac{yu}{2} \,
  S_*\Big(\frac{\partial}{\partial \tilde{z}}\Big), $ which shows that
  $\mathcal{X}_H$ and $S_*(\frac{\partial}{\partial \tilde{z}})$ are
  colinear at every point, as desired.

\end{proof}

\subsection*{Stage 3 --- Integration in linearized Eliasson's
  coordinates}

Let $\phi$ be a local symplectic map such that $ g\circ F\circ \phi =
q \text{ on } \R^4$, as given by Eliasson's normal form theorem.  The
integrals in (..) are defined in terms of the corresponding canonical
coordinates $(x_1,\, x_2,\, \xi_1,\, \xi_2)$ in $\R^4$.

Because our computation is local, we can use instead the linearized
coordinates that we have defined in Lemma \ref{lem:El}.  More
precisely, one can always choose $\phi$ such that the tangent map $
\op{d}_{(0,\,0,\,0,\,0)} \phi: \op{T}_{(0,\,0,\,0,\,0)} \R^4 \to
\op{T}_{(0,\,0,\,1,\,0,\,0)} S^2 \times \mathbb{R}^2 $ is equal to
$\hat{\phi}$, and this gives local coordinates
$(\hat{x}_1,\hat{x}_2,\hat{\xi}_1, \hat{\xi}_2)$ in a neighborhood of
$m$, such that $ B\circ F (\hat{x}_1,\hat{x}_2,\hat{\xi}_1,
\hat{\xi}_2) = q( \hat{x}_1,\hat{x}_2,\hat{\xi}_1, \hat{\xi}_2) +
\mathcal{O}(3).  $

Note that these coordinates are not symplectic, except at $m$.

\begin{lemma}
  The integral (\ref{ex1}) gives us the same result when computed in
  linearized coordinates, i.e. upon replacing $r_A$ by $\hat{r}_A$,
  $t_A$ by $\hat{t}_A$, $\rho_A$ by $\hat{\rho}_A$ and $\theta_A$ by
  $\hat{\theta}_A$.
\end{lemma}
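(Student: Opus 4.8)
The plan is to notice that in the right\--hand side of (\ref{ex1}) the Eliasson coordinates occur \emph{only} through the boundary term $\ln(r_{A_0}\rho_{B_0})$: the $1$\--form $\kappa_{2,0}$ and the curve $\gamma_0$ are determined by $(q_1,q_2)$ and $\omega$ and do not involve the polar coordinates $(r_A,t_A,\rho_A,\theta_A)$ at all. Consequently the difference between the right\--hand side of (\ref{ex1}) and its linearized counterpart is exactly $\lim\big(\ln(r_{A_0}/\hat r_{A_0})+\ln(\rho_{B_0}/\hat\rho_{B_0})\big)$, and it is enough to show that this limit vanishes. (The coordinates $t_A,\theta_A$ named in the statement do not appear in (\ref{ex1}) itself, but the same argument handles the terms $t_{A_0}-\theta_{B_0}$ occurring in the companion formula (\ref{equ:formula2}).)

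The key input will be a comparison of the two coordinate systems near $m$. The true Eliasson coordinates $(x_1,x_2,\xi_1,\xi_2)$ coming from the chart $\phi$ normalized above so that $\op{d}_0\phi=\hat\phi$, and the linearized coordinates $(\hat x_1,\hat x_2,\hat\xi_1,\hat\xi_2)$ of Lemma \ref{lem:El}, have the same value and the same differential at $m$, so that
$$
x_i=\hat x_i+\mathcal{O}(\|(\hat x_1,\hat x_2,\hat\xi_1,\hat\xi_2)\|^2),\qquad
\xi_i=\hat\xi_i+\mathcal{O}(\|(\hat x_1,\hat x_2,\hat\xi_1,\hat\xi_2)\|^2)
$$
in a neighborhood of $m$. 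I would then restrict to $\Lambda_0$: on the local sheet $\Lambda_0^1$ one has $\xi_1=\xi_2=0$, so the relations above force $\hat\xi_1,\hat\xi_2=\mathcal{O}(\|(\hat x_1,\hat x_2,\hat\xi_1,\hat\xi_2)\|^2)$; feeding this back shows $\|(\hat x_1,\hat x_2,\hat\xi_1,\hat\xi_2)\|=\hat r_A\,(1+o(1))$, whence $\hat\xi_1,\hat\xi_2=\mathcal{O}(\hat r_A^2)$ and $x_1+\ii x_2=(\hat x_1+\ii \hat x_2)(1+\mathcal{O}(\hat r_A))$. So $r_A/\hat r_A=1+\mathcal{O}(\hat r_A)$ and $t_A=\hat t_A+\mathcal{O}(\hat r_A)$ on $\Lambda_0^1$; the symmetric computation on $\Lambda_0^2$ (where $x_1=x_2=0$) yields $\rho_B/\hat\rho_B=1+\mathcal{O}(\hat\rho_B)$ and $\theta_B=\hat\theta_B+\mathcal{O}(\hat\rho_B)$.

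Finally, in the limiting process defining the right\--hand side of (\ref{ex1}) the endpoints $A_0$ and $B_0$ tend to $m$ --- along $\Lambda_0^1$ and $\Lambda_0^2$ respectively, those being the sheets on which $r_{A_0}$, respectively $\rho_{B_0}$, are nonzero and tend to $0$ --- so $\hat r_{A_0}\to 0$ and $\hat\rho_{B_0}\to 0$. Combining this with the estimates just obtained gives $\ln(r_{A_0}/\hat r_{A_0})\to 0$ and $\ln(\rho_{B_0}/\hat\rho_{B_0})\to 0$ (and likewise $t_{A_0}-\hat t_{A_0}\to 0$, $\theta_{B_0}-\hat\theta_{B_0}\to 0$), and since the integral $\int_{A_0}^{B_0}\kappa_{2,0}$ is untouched, (\ref{ex1}) produces the same value when computed in the linearized polar coordinates. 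I expect the only step requiring genuine care to be the order bookkeeping on each sheet: confirming that the ``collapsing'' Eliasson coordinate is quadratically small relative to $\hat r_A$ (respectively $\hat\rho_A$), so that the logarithm of the surviving coordinate is perturbed only by $\mathcal{O}(\hat r_A)$ (respectively $\mathcal{O}(\hat\rho_A)$), a quantity that disappears in the limit.
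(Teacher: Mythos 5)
Your proposal is correct and follows essentially the same route as the paper: the coordinates enter only through the boundary term, the linearized and true Eliasson coordinates agree to first order at $m$, so $\hat r_{A}^2=r_{A}^2+\mathcal{O}(3)$ and hence $\ln(\hat r_{A}^2)-\ln(r_{A}^2)$ is of first order and vanishes in the limit (likewise for $\rho_B$). The only difference is that you carry out the sheet-by-sheet bookkeeping justifying $\mathcal{O}(3)/r_A^2\to 0$ explicitly, which the paper's proof asserts without elaboration.
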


\begin{proof}
  Since $r_A^2=x_1^2+x_2^2$, then
  \begin{eqnarray}
    \hat{r}_A^2=\hat{x}_1^2+\hat{x}_2^2=x_1^2+x_2^2+\mathcal{O}(3)
    =r_A^2+\mathcal{O}(3)
    \label{rA:for}
  \end{eqnarray}
  We know that
  $\frac{\mathcal{O}(3)}{x_1^2+x_2^2}=\mathcal{O}(1)$,
  and therefore it follows from (\ref{rA:for}) that
  \begin{eqnarray} \label{1:for}
    \op{ln}(\hat{r}_A^2)=\op{ln}(r_A^2+\mathcal{O}(3))
    =\op{ln}\Big(1+\frac{\mathcal{O}(3)}{r_A^2} \Big) +\op{ln}(r_A^2)
    =\op{ln}(1+\mathcal{O}(1))+\op{ln}(r_A^2)
    =\mathcal{O}(1)+\op{ln}(r_A^2).
  \end{eqnarray}
  Similarly
  $\op{ln}(\hat{\rho}_B^2)=\mathcal{O}(1)+\op{ln}(\rho_B^2)$.  Hence $
  \op{ln}(r_A
  \rho_B)=\op{ln}(r_A)+\op{ln}(\rho_B)=\op{ln}(\hat{r}_A)+\op{ln}(\hat{\rho}_B)
  =\op{ln}(\hat{r}_A\hat{\rho}_B)+\mathcal{O}(1).  $ Then
  \begin{eqnarray} \label{for:new} \lim_{(s,\,t) \to (0,\,0)}
    \op{ln}(r_{A_0} \rho_{B_0}) -
    \op{ln}(\hat{r}_{A_0}\hat{\rho}_{B_0})=0.
  \end{eqnarray}
  It follows from expressions (\ref{ex1}) and (\ref{for:new}) that
  \begin{eqnarray} \label{for:key} a_2 = \lim_{(s_A,\,s_B) \to
      (0,\,0)} \Big( \int_{A_0=\gamma_0(s_A)}^{B_0=\gamma_0(1-s_B)}
    \kappa_{2,0}+\op{ln}\vert \hat{r}_{A_0} \hat{\rho_{B_0}}\vert
    \Big). \label{ex2}
  \end{eqnarray}
  This concludes the proof.
\end{proof}

\subsection*{Stage 4 --- Computation of the first order Taylor series
  invariants $a_1$ and $a_2$}

In order to compute the integrals in (\ref{for:key}) we can replace
$\gamma_0$ by any integral curve of $\mathcal{X}_H$ with the same
endpoints.  Thus, let $\gamma$ be a solution to $
\dot{\gamma}=\mathcal{X}_H \circ \gamma.  $ By definition, for any
1-form $\kappa$,
\begin{eqnarray} \label{for:integral} \int_{A_0:=\gamma(s_1), \,\,
    \textup{along}\, \gamma}^{B_0:=\gamma(s_2)}
  \kappa=\int_{s_1}^{s_2}
  \kappa_{{\gamma(s)}}(\dot{\gamma}(s))\op{d}\!s=
  \int_{s_1}^{s_2}\kappa_{{\gamma(s)}}(\mathcal{X}_H
  (\gamma(s)))\op{d}\!s.
\end{eqnarray}

\begin{theorem} \label{theo:a1} Let $(S) \in \mathbb{R}[[X,\,Y]]$ be
  the Taylor series invariant of the couple\--spin oscillator. Then
  the first coefficient of the first term of the series is given by
  $a_1=\frac{\pi}{2}$.  The second coefficient of the first term of
  the first order Taylor series invariant is $a_2=5 \,\op{ln}2$.
\end{theorem}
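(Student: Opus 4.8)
The plan is to feed the machinery already in place into a short explicit computation. I would first use Lemma~\ref{lem:limit} and Theorem~\ref{san's} to write $a_1,\,a_2$ as limits of line integrals of $\kappa_{1,0},\,\kappa_{2,0}$ along the radial loop $\gamma_0$ of Stage~2, together with the correction terms $t_{A_0}-\theta_{B_0}$ and $\op{ln}(r_{A_0}\rho_{B_0})$, where $A_0,\,B_0$ run to $m$ along the two sheets of the pinched fiber; and then invoke the lemma of Stage~3 (and the same argument applied to the angle correction) to replace the true Eliasson polar coordinates by the linearized ones of Lemma~\ref{lem:El}. Along $\gamma_0$ one has $v=x=0$ and $z=\tilde z$, so Lemma~\ref{lem:El} gives $\hat x_2=\hat\xi_1=0$ identically and $\hat x_1=\tfrac1{\sqrt2}(y-u)$, $\hat\xi_2=\tfrac1{\sqrt2}(y+u)$ with $y=\pm\sqrt{1-\tilde z^2}$ and $u=\sqrt{2(1-\tilde z)}$ (the sign of $y$ according to the branch $S_1$ or $S_2$). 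This reduces both quantities to one-variable calculus in $\tilde z$.

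For $a_1$ the first observation is that the line integral of $\kappa_{1,0}$ along $\gamma_0$ vanishes: since $\gamma_0$ is an integral curve of $\mathcal{X}_H$ and $\mathcal{X}_H=\tfrac12\mathcal{X}_{q_2}$ on $\Lambda_0$ by Proposition~\ref{lem:key}, the integrand $\kappa_{1,0}(\dot\gamma_0)$ is a multiple of $\kappa_{1,0}(\mathcal{X}_{q_2})=0$ by (\ref{for:kappa1}). Thus $a_1=\lim(t_{A_0}-\theta_{B_0})$. From the formulas above, on $\gamma_0$ near $m$ one has $\hat x_1<0$ with $\hat x_2=0$ and $\hat\xi_2>0$ with $\hat\xi_1=0$, regardless of the branch, so $t_{A_0}=\pi$ and $\theta_{B_0}=\tfrac\pi2$; therefore $a_1=\pi-\tfrac\pi2=\tfrac\pi2$.

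For $a_2$ I would convert the line integral of $\kappa_{2,0}$ into elementary calculus in $\tilde z$: using (\ref{for:integral}), the collinearity of $\dot\gamma_0$ with $\mathcal{X}_H$ (with the scalar factor read off from the last proposition of Stage~2 and Corollary~\ref{cor:xh}), and $\kappa_{2,0}(\mathcal{X}_H)=\tfrac12\kappa_{2,0}(\mathcal{X}_{q_2})=-\tfrac12$, the two branches of $\gamma_0$ contribute to the line integral $\int_{-1}^{\tilde z_{A_0}}\frac{\op{d}\!\tilde z}{(1-\tilde z)\sqrt{2(1+\tilde z)}}$ and the analogous integral ending at $\tilde z_{B_0}$. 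The substitution $p=\sqrt{1+\tilde z}$ and partial fractions evaluate each to $\tfrac12\op{ln}\frac{\sqrt2+\sqrt{1+\tilde z}}{\sqrt2-\sqrt{1+\tilde z}}$, so that with $\delta=1-\tilde z_{A_0}$ the first of these equals $\tfrac32\op{ln}2-\tfrac12\op{ln}\delta+\mathcal{O}(\delta)$. On the other hand Lemma~\ref{lem:El} gives $\hat r_{A_0}=\tfrac1{\sqrt2}\big(\sqrt{1-\tilde z_{A_0}^2}+\sqrt{2(1-\tilde z_{A_0})}\big)=2\sqrt\delta\,(1+\mathcal{O}(\delta))$ (and similarly $\hat\rho_{B_0}$ on the other branch), so $\op{ln}\hat r_{A_0}=\op{ln}2+\tfrac12\op{ln}\delta+\mathcal{O}(\delta)$; adding, the divergent $\op{ln}\delta$ cancels and each branch contributes $\tfrac52\op{ln}2$. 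Since $\op{ln}(r_{A_0}\rho_{B_0})=\op{ln}\hat r_{A_0}+\op{ln}\hat\rho_{B_0}$, summing the two branches yields $a_2=5\op{ln}2$.

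The main obstacle is the bookkeeping in the $a_2$ step: one must see that the logarithmic blow-up of $\int_{A_0}^{B_0}\kappa_{2,0}$ as $A_0,\,B_0\to m$ is cancelled exactly by the $\op{ln}(r_{A_0}\rho_{B_0})$ correction (which forces pairing each endpoint with the correct sheet of the critical fiber), and for this one needs the asymptotics $\hat r_{A_0}\sim2\sqrt{1-\tilde z_{A_0}}$ to the displayed precision together with a verification that the $\mathcal{O}(1-\tilde z)$ remainders (including the error from passing to linearized Eliasson coordinates) vanish in the limit. The $a_1$ step, by contrast, is essentially algebraic once one notices that $\kappa_{1,0}$ annihilates the radial field and that the two relevant linearized angles are constant along $\gamma_0$ near $m$.
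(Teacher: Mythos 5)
Your proposal is correct and follows essentially the same route as the paper: reduce $a_1,\,a_2$ via Lemma~\ref{lem:limit}, Theorem~\ref{san's} and the Stage~3 linearization to explicit one\--variable integrals along the radial loop, with $\int\kappa_{1,0}=0$ and the constant angle difference giving $a_1=\frac{\pi}{2}$, and the cancellation of $\ln\delta$ between $\int\kappa_{2,0}$ and $\ln(\hat r_{A_0}\hat\rho_{B_0})$ giving $a_2=5\ln 2$. The only difference is cosmetic (you integrate in $\tilde z$ with the substitution $p=\sqrt{1+\tilde z}$ where the paper integrates in $u$ with $u/2=\cos t$), and your bookkeeping of which sheet carries $\hat r_{A_0}\sim 2\sqrt\delta$ versus $\hat\rho\sim\delta^{3/2}$ is in fact cleaner and more internally consistent than the paper's.
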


\begin{proof}
  We have divided the computation of $a_2$ in several steps.
  \\
  \\
  \emph{\underline{Step 1}: Set-up of the integral of $\kappa_{2,0}$.}
  We need to compute expression (\ref{ex2}).

  Let $a$ be given by (\ref{for:a}).
  
  In view of (\ref{for:xh}), the path $\gamma$ between $A_0$ and $B_0$
  can be parametrized by the variable $u$. This means that the path
  $\gamma$ is obtained by first increasing $u$ up to $u=2$ on the
  first sheet (parametrized by $S_1$) and then decreasing $u$ on the
  second sheet (parametrized by $S_2$).

  By Lemma \ref{lem:key} we know that
  $\mathcal{X}_{q_2}=2\mathcal{X}_H$ and hence
  $(\kappa_{2,0})_{\gamma(s)}(\mathcal{X}_H (\gamma(s)))=
  \frac{(\kappa_{2,0})_{\gamma(t)}(\mathcal{X}_{q_2}(\gamma(s)))}{2}$.
  By definition of $\kappa_{2,0}$ we know that
  $\kappa_{2,0}(\mathcal{X}_{q_2})=-1$ and hence it follows from
  (\ref{for:integral}) that $\int_{A_0,\, \textup{along}\,
    \gamma}^{B_0} \kappa_{2,0}=\int_{s_1}^{s_2}
  \frac{\op{d}\!s}{2}$. Since $\frac{\op{d}\!u}{\op{d}\!s}$ is equal
  to $a=\frac{y}{2}$ we have that
  \begin{eqnarray}
    \int_{A_0, \,\, \textup{along}\, \gamma}^{B_0} \kappa_{2,0}
    = \int_{s_1}^{s_2} \frac{\op{d}\!s}{2}  
    = \int_{u_1}^{2} \frac{\op{d}\!u}{y_+(u)}+\int_{2}^{u_2} \frac{\op{d}\!u}{y_{-}(u)}, 
    \label{sum}
  \end{eqnarray}
  where $y_{\pm}(u)$ is the $y$\--coordinate along the part of the
  curve $\gamma_0$ which corresponds to the parametrization $S_{\pm}$,
  respectively.  Our next goal is to compute expression (\ref{sum}).
  \\
  \\
  \emph{\underline{Step 2}: Computation of expression (\ref{sum})}.
  Now, $ y=\rho \sin(\theta)=\pm \rho.$
  
  Now let us express the dependence of $y$ in $u$ along the path
  $\gamma$.  By the equation $J=\frac{1}{2}(u^2+v^2)+z=-1,$ which is
  always true along the singular fiber, we have that, since $v=0$, $
  \frac{u^2}{2}+z=1, $ or in other words, $ z=1-\frac{u^2}{2}$. It
  follows from this equation that
  \begin{eqnarray}
    y_{\pm} 
    =\pm \rho
    = \pm \sqrt{1-z^2} 
    =\pm \sqrt{1-(1-\frac{u^2}{2})^2} 
    =\pm u \sqrt{1-\frac{u^2}{4}}\,\,\, \, \textup{since}\,\, u>0. \label{eqn:a}
  \end{eqnarray}

  On the other hand, note that the function
  $G(t)=\op{ln}\Big(\frac{1}{\cos t}+\tan t\Big)$ is a primitive of
  the function $g(t)= \frac{1}{\cos t}. $ Then by equation
  (\ref{eqn:a}), using the change of variable $ u/2=\cos t, $ and then
  applying the fundamental theorem of calculus we obtain\footnote{The
    integral is equal to $0$ when $u=2$}
  \begin{eqnarray}
    \int_{u_1}^2 \frac{\op{d}\!u}{y_+}
    = \int_{u_1}^2 \frac{\op{d}\!u}{u \sqrt{1-\frac{u^2}{4}}}  
    = - \Big[ \op{ln} \Big(\frac{1}{\cos t}+\tan t\Big) \Big]^0_{t_1} 
    =- \Big[ \op{ln}\Big(\frac{2}{u}+\frac{2}{u}\sqrt{1-\frac{u^2}{4}}\Big) \Big]^2_{u_1}, 
    \nonumber
  \end{eqnarray}
  and simplifying this expression we then obtain
  \begin{eqnarray}
    \int_{u_1}^2 \frac{\op{d}\!u}{y_+}
    = \op{ln}\Big(\frac{2}{u_1}\Big)+\op{ln}\Big(1+\sqrt{1-\frac{u_1^2}{4}}\Big). \label{eqn:intu12}
  \end{eqnarray}

  The goal of this proof is to compute $a_1$, which by (\ref{ex1}) is
  equal to the limit
  \begin{eqnarray} \label{ex1'} \lim_{(s,\,t) \to (0,\,0)}
    \Big(\int_{A_0:=\gamma_0(s)}^{B_0(t):=\gamma_0(1-t)}
    \kappa_{2,0}+\op{ln}(r_{A_0}\rho_{B_0}) \Big), \nonumber
  \end{eqnarray}
  and precisely because this limit exists, we may calculate it along
  the diagonal values given by $u=u_1=u_2$. Then it follows from
  equation (\ref{eqn:intu12}) that
  \begin{eqnarray}
    \int_{A_0}^{B_0} \kappa&=& \int_{u_1}^2 \frac{\op{d}\!u}{y_{+}}
    +\int_2^{u_2}\frac{\op{d}\!u}{y_{-}}
    =2\int_u^2\frac{\op{d}\! y_{+}}{y_{+}}  
    = 2 \Big(\op{ln}\Big(\frac{2}{u}\Big)+\op{ln}\Big(1+\sqrt{1-\frac{u^2}{4}}\Big) \Big). \label{kappa:for}
  \end{eqnarray}
  This concludes this step.
  \\
  \\
  \emph{\underline{Step 3}: Computation of the logarithm factor $
    \op{ln}(\hat{r}_{A_0}\hat{\rho}_{B_0})$}.
  
  From the notation of Stage 1 we have that $
  \hat{r}_A^2=\hat{x}_1^2+\hat{x}_2^2 $ and that $
  \hat{\rho}_A^2=\hat{\xi}_1^2+\hat{\xi}_2^2.  $ Using Lemma
  \ref{lem:El} we find that $
  \hat{r}^2_A=\frac{1}{2}(x^2+y^2+u^2+v^2)+(-uy+vx) $ and $
  \hat{\rho}^2_A=\frac{1}{2}(x^2+y^2+u^2+v^2)+(uy-vx)$.

  We need to compute $\hat{r}_{A_0}$ and $\hat{\rho}_{B_0}$.  The
  points $A_0$ and $B_0$ are in the path $\gamma_0$ and
  $ A_0:=(u_{A_0},\, v_{A_0},\, \theta_{A_0},\, z_{A_0})=(u_{A_0},\,
  0,\, \frac{\pi}{2},\, 1-\frac{u_{A_0}^2}{2}), $ and $
  B_0:=(u_{B_0},\, v_{B_0},\, \theta_{B_0},\, z_{B_0})= (u_{A_0},\,
  0,\, \frac{-\pi}{2},\, 1-\frac{u_{A_0}^2}{2}).  $

  With this information we can compute $\hat{r}_{A_0}$ and
  $\hat{\rho}_{B_0}$ using expression (\ref{eqn:a}) and recalling that
  $x=v=0$ along $\gamma$:
  \begin{eqnarray}
    \hat{r}^2_{A_0}=\frac{1}{2}(u^2-\frac{u^4}{4}+u^2)-u^2\sqrt{1-\frac{u^2}{4}} 
    &=&\frac{u^2}{2}(2-\frac{u^2}{4}-2\sqrt{1-\frac{u^2}{4}}), \label{for:eq}
  \end{eqnarray}
  where here we have also used $
  \rho^2=1-z^2=1-(1-\frac{u^2}{2})^2=u^2-\frac{u^2}{4}.  $ And we also
  have that
  \begin{eqnarray} \label{eqn:rhor}
    \hat{\rho}^2_{B_0}=\hat{r}^2_{A_0}.
  \end{eqnarray}
  It follows from (\ref{for:eq}) and (\ref{eqn:rhor}) that
  \begin{eqnarray}
    \op{ln}(\hat{r}_{A_0}\hat{\rho}_{B_0})
    =\frac{1}{2}\op{ln}(\hat{r}_{A_0}^2\hat{\rho}_{B_0}^2) 
    =\frac{1}{2}\op{ln}(\hat{r}_{A_0}^4) 
    = \op{ln}(\hat{r}^2_{A_0}) 
    = \op{ln} \Big( \frac{u^2}{2}(2-\frac{u^2}{4}+2\sqrt{1-\frac{u^2}{4}})\Big) \nonumber
  \end{eqnarray}
  and therefore that
  \begin{eqnarray}
    \op{ln}(\hat{r}_{A_0}\hat{\rho}_{B_0})
    =2\op{ln}(\frac{u}{\sqrt{2}}) 
    +\op{ln}(2-\frac{u^2}{4}+2\sqrt{1-\frac{u^2}{4}}). \label{log:for}
  \end{eqnarray}

  This concludes the computation of the logarithmic factor.
  \\
  \\
  \emph{\underline{Step 4}: Conclusion}.  It follows from (\ref{ex1}),
  (\ref{kappa:for}) and (\ref{log:for}) that
  \begin{eqnarray}
    a_2 &=&\lim_{u \to 0} \Big( \int_{A_0}^{B_0} \kappa_{2,0}  + \op{ln}(\hat{r}_{A_0}\hat{\rho}_{B_0}) \Big) \nonumber \\
    &=& \lim_{u \to 0} \Big((2 \op{ln}(\frac{2}{u})+2\op{ln}(1+\sqrt{1-\frac{u^2}{4}})
    +2\op{ln}(\frac{u}{\sqrt{2}})+\op{ln}(2-\frac{u^2}{4}+
    2\sqrt{1-\frac{u^2}{4}})
    \Big) \nonumber \\
    &=& 2\op{ln}2+2\op{ln}2-\op{ln}2+2\op{ln}2=5\op{ln}2.
    \label{equ:numerical_formula}
  \end{eqnarray}
  So we have proven that $ a_2=5\op{ln2} $ as we wanted to show.

  In order to find $a_1$, note that the following hold: $ u \ge 0,\,\,
  v=0,\,\, \theta=\frac{\pi}{2} \,\,\textup{or}\,\,
  \frac{3\pi}{2},\,\, \rho=\sqrt{1-z^2},\,\, z=1-\frac{u^2}{4},\,\,
  \rho=\sqrt{u^2-\frac{u^2}{4}}.  $ In this case $x_1=\frac{u\pm
    \rho}{2},\,\,\,\,\,\,x_2=\frac{u\pm \rho}{2},$ and therefore $
  \hat{\theta}=\frac{\pi}{4}.  $ Similarly $\xi_1=\frac{-u \pm
    \rho}{2},\,\,\,\,\,\, \xi_2=\frac{u \mp \rho}{2}=-\xi_1,$ and
  hence $\alpha=\frac{\pi}{4}$.  It follows that
  $\hat{\theta}_{A_0}-\hat{\alpha}_{B_0}=\frac{\pi}{2}$.
  Therefore by Theorem \ref{san's}
$$
a_1= \lim_{(s,\,t) \to (0,\,0)}
\Big(\int_{A_0=\gamma_0(1)}^{B_0=\gamma_0(1-t)}
\kappa_{1,0}+(\hat{\theta}_A-\hat{\alpha}_B)\Big)=\frac{\pi}{2}.
$$
Here we are using that because $\kappa_0(\mathcal{X}_H)=0$ and
$\gamma_0$ is tangent everywhere to $\mathcal{X}_H$ so one has that
$$
\lim_{(s,\,t) \to (0,\,0)}
\Big(\int_{A_0=\gamma_0(1)}^{B_0=\gamma_0(1-t)} \kappa_0\Big)=0.
$$
(See also the paragraphs before Theorem \ref{theo:a1}). This concludes
the proof.
\end{proof}

Theorem \ref{main} follows from Theorem \ref{theo:a1}.

\begin{remark} 
  It is plausible that our proof technique generalizes to compute the
  higher order terms of the Taylor series invariant, but not
  immediately, as we rely on the limit theorem proved in
  \cite{san-focus} which only applies to the first two terms. The
  computation provides more evidence of the fact that from a dynamical
  and geometric view\--point focus\--focus singularities contain a
  large amount of information.
\end{remark}

\section{Convexity theory for coupled spin\--oscillators}
\label{uniqueness:sec}

The plane $\R^2$ is equipped with its standard affine structure with
origin at $(0,0)$, and orientation.  Let
$\textup{Aff}(2,\R^2):=\textup{GL}(2,\R^2)\ltimes\R^2$ be the group of
affine transformations of $\R^2$. Let
$\textup{Aff}(2,\Z):=\textup{GL}(2,\Z)\ltimes\R^2$ be the subgroup of
\emph{integral-affine} transformations.

Let $\mathcal{T}$ be the subgroup of $\op{Aff}(2,\,\Z)$ of those
transformations which leave a vertical line invariant, or
equivalently, an element of $\mathcal{T}$ is a vertical translation
composed with a matrix $T^k$, where $k \in \Z$ and
  $$
  T^k:=\left(
    \begin{array}{cc}
      1 & 0\\ k & 1
    \end{array}
  \right) \in \op{GL}(2,\, \Z).
  $$
  Let $\ell_0\subset\R^2$ be a vertical line in the plane, not
  necessarily through the origin, which splits it into two
  half\--spaces, and let $n\in\Z$. Fix an origin in $\ell$.  Let
  $t^n_{\ell_0} \colon \R^2 \to \R^2$ be the identity on the left
  half\--space, and $T^n$ on the right half\--space. By definition
  $t^n_{\ell_0}$ is piecewise affine.  A \emph{convex polygonal set}
  $\Delta$ is the intersection in $\R^2$ of (finitely or infinitely
  many) closed half\--planes such that on each compact subset of the
  intersection there is at most a finite number of corner points. We
  say that $\Delta$ is \emph{rational} if each edge is directed along
  a vector with rational coefficients.  For brevity, in this paper we
  usually write \emph{``polygon''} instead of \emph{``convex polygonal
    set''}.

  \subsection{Construction of the semitoric polygon invariant}
  \label{semitoric:sec}
  Let $\ell$ be a vertical line through the focus\--focus value $c$.
  Let $B_{\op{r}}:=\op{Int}(B)\setminus \{c\}$, which is precisely the
  set of regular values of $F$.  Given a sign $\epsilon\in\{-1,+1\}$,
  let $\ell^{\epsilon}\subset\ell$ be the vertical half line starting
  at $c$ at extending in the direction of $\epsilon$~: upwards if
  $\epsilon=1$, downwards if $\epsilon=-1$.
 
  In Th.~3.8 in \cite{san-polytope} it was shown that for
  $\epsilon\in\{-1,+1\}$ there exists a homeomorphism $f
  =f_\epsilon\colon B \to \R^2$, modulo a left composition by a
  transformation in $\mathcal{T}$, such that $f|_{(B\setminus
    \ell^{\epsilon})}$ is a diffeomorphism into its image
  $\Delta:=f(B)$, which is a \emph{rational convex polygon},
  $f|_{(B_r\setminus \ell^{\epsilon})}$ is affine (it sends the
  integral affine structure of $B_{\op{r}}$ to the standard structure
  of $\R^2$) and $f$ preserves $J$: i.e.  $
  f(x,\,y)=(x,\,f^{(2)}(x,\,y)).  $ $f$ satisfies further properties
  \cite{san-alvaro-I}, which are relevant for the uniqueness theorem
  proof.  In order to arrive at $\Delta$ one cuts $(J,\,H)(M) \subset
  \R^2$ along the vertical half\--lines $\ell^{\epsilon}$. Then the
  resulting image becomes simply connected and thus there exists a
  global $2$\--torus action on the preimage of this set. The polygon
  $\Delta$ is just the closure of the image of a toric momentum map
  corresponding to this torus action.

    \begin{figure}[h]
      \centering
      \includegraphics[width=0.8\linewidth]{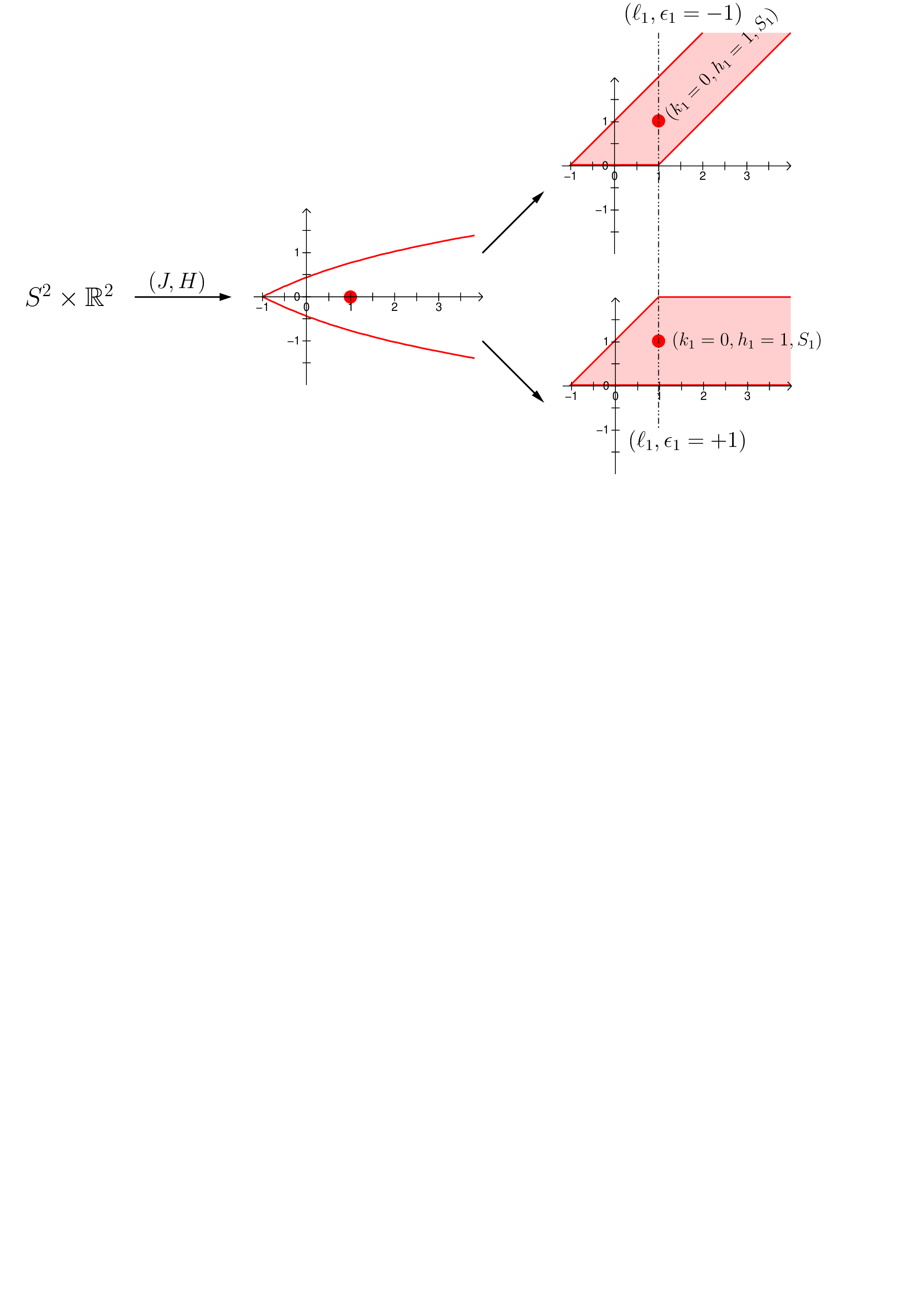}
      \caption{The coupled spin-oscillator example. The middle figure
        shows the image of the initial moment map $F=(J,\, H)$. Its
        boundary is the parametrized curve $(j(s)=\frac{s^2-3}{2s},
        h(s)=\pm\frac{s^2-1}{2s^{3/2}}), \,\,s\in[1,\infty).  $ The
        image is the connected component of the origin. The system is
        a simple semitoric system with one focus\--focus point whose
        image is $(1,\,0)$. The invariants are depicted on the right
        hand-side.  The class of generalized polygons for this system
        consists of two polygons.}
      \label{fig:spin}
    \end{figure}

    We can see that this polygon is not unique. The choice of the
    ``cut direction'' is encoded in the signs $\epsilon$, and there
    remains some freedom for choosing the toric momentum
    map. Precisely, the choices and the corresponding homeomorphisms
    $f$ are the following~:
    \begin{itemize}
    \item[(a)] {\em an initial set of action variables $f_0$ of the
        form $(J,\,K)$} near a regular Liouville torus in \cite[Step
      2, pf. of Th.~3.8]{san-polytope}.  If we choose $f_1$ instead of
      $f_0$, we get a polygon $\Delta'$ obtained by left composition
      with an element of $\mathcal{T}$.  Similarly, if we choose
      $f_1$ instead of $f_0$, we obtain $f$ composed on the left with
      an element of $\mathcal{T}$;
    \item[(b)] {\em an integer $\epsilon \in \{1,\,-1\}$}.  If we
      choose $\epsilon'$ instead of $\epsilon$ we get
      $\Delta'=t_{u}(\Delta) $ with $u=(\epsilon-\epsilon')/2$, by
      \cite[Prop. 4.1, expr. (11)]{san-polytope}.  Similarly instead
      of $f$ we obtain $f'=t_{u} \circ f$.
    \end{itemize}

    Once $f_0$ and $\epsilon$ have been fixed as in (a) and (b),
    respectively, then there exists a unique toric momentum map $\mu$
    on $M_r:=F^{-1}(\textup{Int}{B}\setminus \ell{\epsilon})$ which
    preserves the foliation $\mathcal{F}$, and coincides with
    $f_0\circ F$ where they are both defined. Then, necessarily, the
    first component of $\mu$ is $J$, and we have
    $\overline{\mu(M_r)}=\Delta.$

    We need now for our purposes to formalize choices (a) and (b) in a
    single geometric object.  Let $\op{Polyg}(\R^2)$ be the space of
    rational convex polygons in $\R^2$. Let $\op{Vert}(\R^2)$ be the
    set of vertical lines in $\R^2$.  A {\em weighted polygon} (of
    complexity $1$) is a triple of the form $
    \Delta_{\scriptop{w}}=\Big(\Delta,\, \ell_{\lambda},\,
    \epsilon\Big) $ where $\Delta \in \op{Polyg}(\R^2)$, $\ell \in
    \op{Vert}(\R^2)$, and $\epsilon \in \{-1,\,1\}$. Let
    $G:=\{-1,\,+1\}$. Obviously, the group $\mathcal{T}$ sends a
    rational convex polygon to a rational convex polygon. It
    corresponds to the transformation described in (a). On the other
    hand, the transformation described in (b) can be encoded by the
    group $G$ acting on the triple $\Delta_{\scriptop{w}}$ by the
    formula
$$
\epsilon' \cdot \Big( \Delta,\,\ell_{\lambda} ,\, \epsilon \Big)=
\Big(t_{u}(\Delta),\,\ell_{\lambda},\, \epsilon' \,\epsilon
\Big),\label{equ:actionGs} \nonumber
$$
where $\vec u=(\epsilon-\epsilon')/2$.  This, however, does not always
preserve the convexity of $\Delta$, as is easily seen when $\Delta$ is
the unit square centered at the origin and $\lambda_1=0$. However,
when $\Delta$ comes from the construction described above for a
semitoric system $(J,\,H)$, the convexity is preserved. Thus, we say
that a weighted polygon is \emph{admissible} when the $G$\--action
preserves convexity.  We denote by $\mathcal{W}\op{Polyg}(\R^2)$ the
space of all admissible weighted polygons (of complexity $1$).  The
set $G \times \mathcal{T}$ is an abelian group, with the natural
product action.  The action of $G\times \mathcal{T}$ on
$\mathcal{W}\op{Polyg}(\R^2)$, is given by:
\[
(\epsilon',\,\tau) \cdot \Big( \Delta,\,\ell_{\lambda},\, \epsilon
\Big)= \Big(t_{u}(\tau(\Delta)),\,\ell_{\lambda},\, \epsilon'\,\epsilon
\Big),
\]
where $u=(\epsilon-\epsilon')/2$.  We call a \emph{semitoric polygon}
the equivalence class of an admissible weighted polygon under the $(G
\times \mathcal{T})$\--action.

Let $\Delta$ be a rational convex polygon obtained from the momentum
image $(J,\,H)(M)$ according to the above construction of cutting
along the vertical half-line $\ell^{\epsilon}$.

\begin{definition} \label{generalizedpolytope} The {\em semitoric
    polygon invariant of} $(M,\, \omega,\, (J,\,H))$ is the semitoric
  polygon equal to the $(G \times \mathcal{T})$\--orbit $(G \times
  \mathcal{T})\cdot \Big(\Delta,\, \ell,\, \epsilon \Big) \in
  \mathcal{W}\op{Polyg}(\R^2)/(G \times \mathcal{T}).  $
\end{definition}

\subsection{The semitoric polygon invariant of coupled
  spin\--oscillators}

   \begin{prop}
     The semitoric polygon invariant of the coupled spin\--oscillator
     is the $(G \times \mathcal{T})$\--orbit consisting of the two
     convex polygons depicted on the right hand\--side of Figure
     \ref{fig:spin}.
   \end{prop}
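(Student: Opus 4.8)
The plan is to compute the polygon invariant directly by following the construction recalled in Section~\ref{semitoric:sec}, applied to the explicit coupled spin\--oscillator. First I would describe the momentum image $B=F(M)$: by the formulas $J=(u^2+v^2)/2+z$ and $H=\frac12(ux+vy)$ one sees, as indicated in Figure~\ref{fig:spin}, that $B$ is the connected component of the origin of the region bounded by the curve parametrized by $s\mapsto\big(j(s)=\frac{s^2-3}{2s},\,h(s)=\pm\frac{s^2-1}{2s^{3/2}}\big)$, $s\in[1,\infty)$. I would verify that the only boundary points are images of elliptic (transversally\--elliptic and elliptic\--elliptic) singularities, that $\op{Int}(B)\setminus\{(1,0)\}$ is exactly the set of regular values (using Proposition~\ref{prop:nd}), and that the single focus\--focus value is $c=(1,0)$, so $\ell$ is the vertical line $\{x=1\}$.

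Next I would carry out the cutting procedure of \cite{san-polytope}: fix $\epsilon\in\{-1,+1\}$, cut $B$ along the half\--line $\ell^{\epsilon}$, and on the resulting simply connected set build the toric momentum map $\mu=(J,\mu^{(2)})$ whose closure of the image is the rational convex polygon $\Delta_\epsilon$. Concretely I would integrate the action variable along the straightening of the affine structure near a regular torus, keeping $J$ as the first coordinate, and read off the edges and their slopes on either side of $\ell$; the monodromy around the focus\--focus value forces the slope on the right of $\ell$ to jump by the matrix $T$, which is what produces the ``kink''. Doing this for both choices $\epsilon=+1$ and $\epsilon=-1$ yields two concrete polygons, and by choice (b) recalled above they are related by $\Delta'=t_u(\Delta)$ with $u=(\epsilon-\epsilon')/2=\pm1$; these are precisely the two polygons drawn on the right of Figure~\ref{fig:spin}. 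Finally I would check admissibility, i.e.\ that the $G$\--action preserves convexity for this $\Delta$, so that the $(G\times\mathcal{T})$\--orbit is well defined, and conclude that the semitoric polygon invariant is exactly this orbit, of cardinality two.

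The main obstacle I expect is the explicit determination of the polygon: one must compute the action integral $\oint_{\gamma}\alpha$ (equivalently, straighten the integral\--affine structure of $B_{\op r}$) along the two sides of the cut and check that the pieces fit together into a genuinely convex rational polygon with the correct vertices, in particular identifying where the boundary curve $h(s)=\pm\frac{s^2-1}{2s^{3/2}}$ maps to under $f_\epsilon$. Everything else --- the identification of $B$, the location of the singularities via Proposition~\ref{prop:nd}, the fact that there is a single focus\--focus fiber, and the bookkeeping of the $\mathcal{T}$\--ambiguity and the $G$\--action --- is a direct application of the machinery of \cite{san-polytope, san-alvaro-I} together with the explicit formulas already established in Section~\ref{taylor:sec}. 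Once the polygon is pinned down, the count of two polygons in the orbit is immediate from the description of the $G$\--action.
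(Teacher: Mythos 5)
Your overall framework is the right one --- identify $B=F(M)$, locate the unique focus--focus value $c=(1,0)$, cut along $\ell^{\epsilon}$, and read off the polygon as the closure of the image of the toric momentum map --- but as written the proposal never actually produces the polygon. You correctly flag the decisive step (``the explicit determination of the polygon,'' i.e.\ computing the action integral $\oint_\gamma\alpha$ or equivalently straightening the integral-affine structure of $B_{\op{r}}$) as the main obstacle, and then leave it unresolved. Since the entire content of the proposition is the identification of the two specific polygons (vertices at $(-1,0)$ and $(1,0)$ with slope-$1$ edges, resp.\ vertices at $(-1,0)$ and $(1,2)$), a proof that stops short of pinning down these vertices and slopes has a genuine gap: the monodromy argument you invoke only tells you that the slope jumps by $T^{\pm1}$ across $\ell$, not what the slopes are to begin with.

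The paper closes exactly this gap by a different and much lighter route that avoids action integrals altogether: the isotropy weights of the $S^1$-action generated by $J$ at the elliptic--elliptic fixed point (the South Pole, giving the left corner $(-1,0)$ of the polygon) are read off from the linearized action, and by \cite[Prop.~6.1]{san-polytope} together with \cite[Thm.~5.3]{san-polytope} these weights determine the slopes of the two edges emanating from that vertex via the (piecewise-linear) Duistermaat--Heckman function of $J$; the kink at $J=1$ caused by the focus--focus point then yields the second representative. If you want to salvage your direct approach you must either carry out the action computation explicitly or substitute this isotropy-weight/Duistermaat--Heckman argument for it. The remaining bookkeeping in your write-up (the $\mathcal{T}$-ambiguity, the $G$-action relating $\Delta$ and $t_u(\Delta)$ with $u=(\epsilon-\epsilon')/2$, and admissibility) is correct and matches the paper.
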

   
   \begin{proof}
     As shown in Figure \ref{fig:spin}, a representative of the
     semitoric polygon invariant is a polygon in $\R^2$ with exactly
     two vertices at $(-1,\,0)$ and $(1,\,0)$, and from these two
     points leave straight lines with slope $1$ (the other possible
     polygon representative has vertices at $(-1,\,0)$ and $(1,\,2)$).
     One finds this polygon simply by combining the information about
     the isotropy weights at the left corner of the polygon (an
     elliptic-elliptic critical value) \cite[Prop.~6.1]{san-polytope},
     together with the formula given in \cite[Thm.~5.3]{san-polytope},
     in which the relation between isotropy weights and the slopes of
     the edges of the polygon is described using the
     Duistermaat\--Heckman function.
   \end{proof}

   \subsection{Classification theory for coupled spin\--oscillators}

   The authors have recently given a general classification of general
   semitoric integrable in dimesion $4$ \cite{san-alvaro-I},
   \cite{san-alvaro-II} in terms of five symplectic invariants; the
   reader familiar with these works can easily that two of these
   invariants do not appear in the case of coupled spin\--oscillators,
   and we state the uniqueness theorem therein in this particular
   case\footnote{The first of these invariants is the number of
     focus\--focus singularities.  The last of these invariants, the
     so called twisting index invariant, is a rather subtle
     topological invariant which measures how the topology near a
     focus\--focus singular fiber relates to the topology near the
     other focus\--focus fibers.  Hence the invariant only appears
     when there is more than one focus\--focus singularity, and in the
     following we shall not mention it. The twisting\--index expresses
     the fact that there is, in a neighborhood of any focus-focus
     point $c_i$, a \emph{privileged toric momentum map} $\nu$. This
     momentum map, in turn, is due to the existence of a unique
     hyperbolic radial vector field in a neighborhood of the
     focus-focus fiber. Therefore, one can view the twisting-index as
     a dynamical invariant. This is an important invariant in the
     general case, see \cite{san-alvaro-I}.}

   Consider a focus\--focus critical point $m$ whose image by
   $(J,\,H)$ is $\tilde{c}$, and let $\Delta$ be a rational convex
   polygon corresponding to the system $(M,\, \omega,\, (J,\,H))$.  If
   $\mu$ is a toric momentum map for the system $(M, \, \omega,\,(J,
   \, H))$ corresponding to $\Delta$, then the image $\mu(m)$ is a
   point in the interior of $\Delta$, along the line $\ell$.  We
   proved in \cite{san-alvaro-I} that the vertical distance
   $h:=\mu(m)-\min_{s \in \ell \cap \Delta} \pi_2(s)>0$ is independent
   of the choice of momentum map $\mu$. Here $\pi_2 \colon \R^2 \to
   \R$ is $\pi_2(c_1,\,c_2)=c_2$.

   \begin{theorem}[consequence of Th.~6.2,
     \cite{san-alvaro-I}] \label{mainthm} Let $(M, \, \omega, \, (J,
     \, H))$ be a $4$\--dimensional semitoric integrable system with
     exactly one focus\--focus singularity.  The {\em list of
       invariants of $(M, \, \omega, \, (J, \, H))$} consists of the
     following items: (i) the Taylor series invariant $(S)^{\infty}$
     at the focus\--focus singularity $m$; (ii) the semitoric polygon
     invariant; (iii) the volume invariant, i.e. the height $h>0$ of
     $m$.  Two $4$\--dimensional simple semitoric integrable systems
     $(M_1, \, \omega_1, (J_1,\,H_1))$ and $(M_2, \, \omega_2,
     (J_2,\,H_2))$ with exactly one focus\--focus singularity are
     isomorphic if and only if the list of invariants (i)\--(iii) of
     $(M_1, \, \omega_1, (J_1,\,H_1))$ is equal to the list of
     invariants (i)\--(iii) of $(M_2, \, \omega_2, (J_2,\,H_2))$.
   \end{theorem}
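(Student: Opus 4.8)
The plan is to obtain the statement by specializing the general symplectic classification of $4$\--dimensional simple semitoric systems (Th.~6.2 in \cite{san-alvaro-I}) to the subclass of systems having exactly one focus\--focus point, and by checking that two of the five invariants appearing there become redundant in this case. Recall that in the general theorem a simple semitoric system is determined up to isomorphism by: (1) the number $m_f$ of focus\--focus points; (2) the family of Taylor series invariants, one at each focus\--focus point; (3) the semitoric polygon invariant, a $(G\times\mathcal{T})$\--orbit of admissible weighted polygons of complexity $m_f$; (4) the family of volume (height) invariants $h_i>0$; and (5) the twisting index invariant. The claim is that under the hypothesis $m_f=1$ the invariants (1) and (5) carry no information, so the list reduces to items (i)--(iii).

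First I would note that the hypothesis fixes $m_f=1$ uniformly over the whole class, so invariant (1) distinguishes nothing and may be dropped. I would also observe that a semitoric system with a single focus\--focus point is automatically \emph{simple} in the sense of \cite{san-alvaro-I}, since simplicity only asks that distinct focus\--focus points lie on distinct vertical lines $\{J=\mathrm{const}\}$, a condition which is vacuous here; thus Th.~6.2 applies directly, and its invariants (2), (3), (4) are precisely (i), (ii), (iii) in the present statement.

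Next I would handle invariant (5). As recalled in the footnote above, the twisting index records, through the group $\mathcal{T}$, how the \emph{privileged} toric momentum map attached near each focus\--focus fiber -- the one singled out by the unique hyperbolic radial vector field in a neighborhood of that fiber -- relates to a global toric momentum map underlying the polygon invariant. When there is a single focus\--focus fiber one is free to take the global toric momentum map to be precisely this privileged one, so the twisting index is identically trivial and again distinguishes nothing. Hence only invariants (2), (3), (4) survive, which proves the assertion about the list. The equivalence ``isomorphic $\iff$ equal lists'' is then exactly the corresponding equivalence in Th.~6.2 of \cite{san-alvaro-I} specialized to this subclass.

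I expect the main obstacle to be the precise justification that the twisting index trivializes: one must unwind its definition from \cite{san-alvaro-I} and verify that, with only one focus\--focus fiber present, the privileged toric momentum map may indeed be chosen to coincide with the global toric momentum map used to construct the polygon invariant, so that no additional gluing datum survives. The remaining steps are a direct application of the general classification theorem.
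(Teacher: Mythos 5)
Your proposal is correct and follows essentially the same route as the paper, which presents this theorem as a direct specialization of Th.~6.2 of \cite{san-alvaro-I} and relegates the justification to the surrounding discussion and footnote: the number of focus--focus points is fixed at one, and the twisting index (being a relative datum between focus--focus fibers, normalizable by the $\mathcal{T}$-action when only one such fiber exists) carries no information. Your explicit unwinding of why the twisting index trivializes is a slightly more careful version of what the paper asserts in its footnote.
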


\begin{theorem}
  The coupled spin\--oscillator has the following symplectic
  invariants: (i) first terms of the Taylor series invariant:
  $a_1=\frac{\pi}{2}$ and $a_2=5\op{ln}2$; (ii) semitoric polygon
  invariant: $(G \times \mathcal{T}) \cdot \Delta_{\scriptop{w}},$
  where $\Delta_{\scriptop{w}}$ is either the upper or lower weighted
  polygon depicted on the right\--most side of Figure~\ref{fig:spin};
  (iii) volume invariant: $h=1$.
\end{theorem}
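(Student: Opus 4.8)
Parts (i) and (ii) require nothing new: (i) is exactly Theorem~\ref{theo:a1} (equivalently Theorem~\ref{main}), and (ii) is the Proposition identifying the semitoric polygon invariant of the coupled spin\--oscillator with the $(G\times\mathcal{T})$\--orbit of the weighted polygons in Figure~\ref{fig:spin}. So the only statement left to establish is (iii), namely that the volume (height) invariant of Theorem~\ref{mainthm} equals $h=1$.

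The plan for (iii) is to read off $h$ from the symplectic reduction of the $S^1$\--action generated by $J$ at the focus\--focus value $F(m)=(1,\,0)$, so that the relevant vertical line is $\ell=\{c_1=1\}$. By \cite{san-alvaro-I} (see also Theorem~\ref{mainthm}) one has $h=\mu(m)-\min_{s\in\ell\cap\Delta}\pi_2(s)$ for any toric momentum map $\mu$ compatible with a representative polygon $\Delta$. Since $J$ is a proper momentum map and $(1,\,0)$ is a focus\--focus value, the reduced space $M_{\op{red}}:=J^{-1}(1)/S^1$ is a compact smooth symplectic surface (diffeomorphic to $S^2$), on which $H$ descends to a function $\bar{H}$ whose unique critical point is the image $\bar{m}$ of $m$ --- a nondegenerate saddle with $\bar{H}(\bar{m})=H(m)=0$. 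The second component of $\mu$ restricted to $J^{-1}(1)$ descends to an action variable $\bar{A}$ for $\bar{H}$; on a surface the level sets of such an action variable coincide with those of $\bar{H}$, so the edge $\ell\cap\Delta$ has affine length $\tfrac{1}{2\pi}\op{area}(M_{\op{red}})$, the point $\mu(m)=\bar{A}(\bar{m})$ splits it according to the two components of $M_{\op{red}}\setminus\{\bar{H}=0\}$, and hence $h=\tfrac{1}{2\pi}\op{area}(\{\bar{H}<0\})$, the choice of component being fixed by the cut sign $\epsilon$ (and irrelevant by the symmetry below).

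It remains to compute these two areas. Parametrize $J^{-1}(1)$ as in the parametrization of $\Lambda_0$: writing $u+\ii v=r\op{e}^{\ii t}$ and $x+\ii y=\rho\op{e}^{\ii\theta}$, the equation $J=1$ becomes $r=\sqrt{2(1-z)}$, $\rho=\sqrt{1-z^2}$, $z\in[-1,\,1]$, and the $S^1$\--action rotates $t$ and $\theta$ at equal speed, so its invariants are $z$ and $\phi:=t-\theta$. Thus $M_{\op{red}}=\{(z,\,\phi)\ :\ z\in[-1,\,1],\ \phi\in\R/2\pi\Z\}$ with the circles $z=\pm1$ collapsed to points, the reduced symplectic form is $\DD{}\phi\wedge\DD{}z$, and by~(\ref{JH:for})
$$
\bar{H}(z,\,\phi) = \frac{r\rho}{2}\cos\phi = \frac{(1-z)\sqrt{1+z}}{\sqrt{2}}\cos\phi .
$$
Since $(1-z)\sqrt{1+z}>0$ on $(-1,\,1)$, we get $\op{area}(M_{\op{red}})=\int_{-1}^{1}\!\int_{0}^{2\pi}\DD{}\phi\,\DD{}z=4\pi$, while $\{\bar{H}<0\}=\{\cos\phi<0\}$ has area $\int_{-1}^{1}\!\int_{\pi/2}^{3\pi/2}\DD{}\phi\,\DD{}z=2\pi$ (the involution $\phi\mapsto\phi+\pi$ reverses the sign of $\bar{H}$, so the two components of $M_{\op{red}}\setminus\{\bar{H}=0\}$ have equal area). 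Therefore $h=\tfrac{1}{2\pi}\cdot 2\pi=1$; equivalently $\ell\cap\Delta$ has affine length $2$ with $\mu(m)$ at its midpoint, in agreement with Figure~\ref{fig:spin}. Combined with Theorem~\ref{theo:a1} and the polygon computation, this proves (i)--(iii).

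The one genuinely delicate point above is the bookkeeping of conventions: one must orient $\omega_{S^2}$ and $\omega_0$ so that $J$ is honestly the momentum map of the circle action (and not its inverse) and use the induced normalization of $\bar{A}$, so that the factor $2\pi$ is the correct one and the answer comes out exactly $h=1$ rather than a rescaling of it. Once those choices are fixed as above, the remaining computation is the short one just indicated; no semiclassical input is needed for this invariant, in contrast with part (i).
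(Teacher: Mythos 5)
Your proposal is correct, and for parts (i) and (ii) it coincides with the paper's proof (both simply invoke the earlier computations). For part (iii) the key idea is also the same as the paper's: the height is half of the normalized volume of the slice $J^{-1}(1)$ because the involution $(u,v)\mapsto(-u,-v)$ --- which on your reduced space is exactly $\phi\mapsto\phi+\pi$ --- preserves $J$ and reverses the sign of $H$. The only real difference is how the total is obtained: the paper reads the affine length of the vertical slice at $J=1$ (namely $2$) directly off the polygon already computed in (ii), explicitly declining to compute anything, whereas you compute $\op{area}(M_{\op{red}})=4\pi$ from scratch by carrying out the symplectic reduction of $J^{-1}(1)$ and integrating $\DD\phi\wedge\DD z$. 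Your route is a bit longer but self-contained, serves as an independent consistency check on the polygon's slice length, and makes explicit the identification $h=\frac{1}{2\pi}\op{area}(\{\bar{H}<0\})$ that the paper leaves implicit; the one cosmetic caveat is that $M_{\op{red}}$ is not smooth at the image of the focus--focus point (where the $S^1$\--action has a fixed point), but this is a measure-zero issue that does not affect the area computation.
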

  
 \begin{proof}
   The semitoric polygon invariant and the first terms of the Taylor
   series invariant were computed previously. The height of the
   focus\--focus point of the system in the polygon is equal to half
   of the Liouville volume of the submanifold of $M$ given by the
   equation $J=1$. This is because the functions $H$ and $J$ are
   symmetric about the $J$\--axis of $\R^2$ in the sense that
   $J(x,\,y,\,z,\,u,\,v)=J(x,\,y,\,z,\,-u,\,-v)$ and
   $H(x,\,y,\,z,\,u,\,v)=-H(x,\,y,\,z,\,-u,\,-v)$. Here there is no
   need to compute anything because the volume of the submanifold
   given by $J=1$ in $M$ is just the length of the vertical slice of
   the polygon at $J=1$, which is $2$, and hence the height of the
   focus\--focus point of the system is $h_1=1$, and the image of the
   focus\--focus point in the polygon is $(1,\,1)$.
 \end{proof}

 \section{Spectral theory for quantum spin\--oscillators}

 In this section, we use the notation of the previous sections
 $J=\frac{u^2+v^2}{2}+z$ and $H=\frac{1}{2}(xu+vy)$. Our goal in this
 section is to quantize this example and analyze its semiclassical
 spectrum.

 First we quickly review the process of assigning a quantum system to
 a classical system. Loosely speaking, a \emph{quantum integrable
   system} is a collection of commuting self\--adjoint operators on a
 Hilbert space.  \emph{Quantization} is a process that takes a
 classical phase space (here, a symplectic manifold $M$) to a Hilbert
 space $\hat{M}$, and classical Hamiltonians $f\in \op{C}^\infty(M)$ to
 self\--adjoint operators $\hat{f}$ acting on $\hat{M}$. The
 quantization of symplectic manifold is often called geometric
 quantization. See the recent book by
 Kostant-Pelayo~\cite{kostant-pelayo} for a survey. Quantizing
 Hamiltonians involves more difficulties. For instance, we need the
 map $f\mapsto \hat{f}$ to be a Lie algebra homomorphism, at least at
 first order~: if the classical system is given by two Poisson
 commuting functions $f,\,g$ then the quantum system is given by two
 operators $\hat{f},\, \hat{g}$ such that
 \begin{equation}
   {\textstyle \frac{\hbar}{i}}[\hat{f},\, \hat{g}]=0 \quad
   \mod(\mathcal{O}(\hbar)).
   \label{equ:commute}
 \end{equation}
 Such a quantization is well-known\footnote{for instance Weyl
   quantization, but there are other possible choices} to exist when
 $M=\R^{2n}$, and more generally on a cotangent bundle $M=\op{T}^*\!X$,
 using $\hbar$-pseudodifferential
 quantization~\cite{dimassi-sjostrand}. Quantizing compact symplectic
 manifolds is also possible under an integrality condition (the
 existence of a so-called prequantum line bundle), using Toeplitz
 quantization~\cite{charles-toeplitz}. However, because of the
 remainder in~\eqref{equ:commute}, it is not known whether a classical
 integrable system can always be quantized to a true quantum
 integrable system. Very recently, in the algebraic setting, the
 relevant obstruction was
 defined~\cite{garay-vanstraten-integrability}. In the coupled
 spin-oscillator example, like in many known systems, an exact
 quantization can be found by hand.

 A well\--known example is the harmonic oscillator in $\R^2$.  The
 harmonic oscillator is given by $M=\R^2$ with coordinates $(u,\,v)$
 and Hamiltonian function on it $ N(u,\,v)=\frac{u^2+v^2}{2}.  $ The
 self\--adjoint operator $\hat{N}$ in the Hilbert space
 $\textup{L}^2(\R)$ given by $
 \hat{N}=-\frac{\hbar^2}{2}\frac{\textup{d}^2}{\textup{d}u^2}+\frac{u^2}{2}
 $ is the standard Weyl quantization of the Hamiltonian $N$. The
 spectrum of $\hat{N}$ is discrete and given by $
 \{\hbar(n+\frac{1}{2}) \, \, | \,\, n \in \N\}.$ The eigenfunctions
 are \emph{Hermite functions}.  This operator will be used as a
 quantum building tool in the sequel.

 \subsection{Quantization of $\R^4$ and the Harmonic Oscillator}
 \label{sec:harmonic_quantization}

 We shall view $S^2$ as a reduced space of $\R^4\simeq\C^2$ under the
 coordinate identification $z_1=x_1+\ii \xi_1$, $z_2=x_2+\ii \xi_2$.
 On $\R^4$ we consider the well\--known harmonic oscillator, $
 L(z_1,\,z_2)=\frac{\vert z_1\vert^2+\vert z_2\vert^2}{2} $ which has
 a $2\pi$-periodic flow generating a Hamiltonian $S^1$\--action $ t
 \cdot (z_1,\,z_2)=(z_1\op{e}^{-\ii t},\, z_2\op{e}^{-\ii t}).$

 The space $Y_E:=\{L=E\}$, for any value $E>0$, is of course the
 euclidean $3$\--sphere $S^3_{\sqrt{2E}}\subset\R^4$ of radius
 $\sqrt{2E}$. It is well known that the reduced space $\{L=E\}/S^1$ is
 $2$\--sphere, and the fibration map $ \{L=E\} \to \{L=E\}/S^1 $ is
 the standard \emph{Hopf fibration}.  More precisely, we may represent
 this reduced space as the euclidean sphere $S^2_{E/2}\subset \R^3$ of
 radius $E/2$.  Denoting by $(x,y,z)$ the variables in $\R^3$, we have
 the following useful formula for the Hopf map, which will be used for
 quantization~:
 \begin{align*}
   \label{eq:4}
   x & = \Re(z_1\bar{z_2})/2 \\
   y & = \Im(z_1\bar{z_2})/2\\
   z & = (\abs{z_1}^2 - \abs{z_2}^2)/4.
 \end{align*}
 
 The usual quantization of $\R^4$ is the Hilbert space $
 \mathcal{H}_{\R^4}=\op{L}^2(\R^2)$.  The Weyl quantization of the
 Hamiltonian function $L$ is the unbounded operator $
 \hat{L}:=-\frac{\hbar^2}{2} \Big(
 \frac{\op{d}^2}{\op{d}\!x_1^2}+\frac{\op{d}^2}{\op{d}\!x_2^2}\Big)
 +\frac{x_1^2+x_2^2}{2}.  $

 The spectrum of $\hat{L}$ is given by $ \op{spec}(\hat{L})=\{\hbar
 (n+1) \, | \, n \in \N\}.$ To see this, define the operator $
 \hat{L}_j:=-\frac{\hbar^2}{2}\Big(\frac{\op{d}^2}{\op{d}\!x_1^2}
 \Big)+\frac{x_j^2}{2} $ acting on $\op{L}^2(\R_{x_j})$. We can write
 $\hat{L}=\hat{L}_1+\hat{L}_2$. Note that the spectrum of $\hat{L}_j$
 is
 \begin{eqnarray} \label{Hjspectrum}
   \op{spec}(L_j)=\{\hbar(n_j+\frac{1}{2})\, | \, n_j \in \N\}.
 \end{eqnarray}
 Therefore we deduce that the spectrum of $\hat{L}$ is given by $
 \{\hbar(n_1+n_2+1) \, | \, n_1 \in \N,\, n_2 \in \N\}, $ and the
 formula above follows since $n_1$ and $n_2$ are arbitrary
 non-negative integers.  The multiplicity of $\hbar(n+1)$ is given by
 the number of pairs $(n_1,\,n_2)$ such that $n_1+n_2=n$, which is
 precisely $n+1$.

 \subsection{Quantization of the space $S^2 \times \R^2$ and the
   Hamiltonians $J$ and $L$}

 We define the \emph{quantization} of $ S^2_{E/2}$ to be the finite
 dimensional Hilbert space $\mathcal{H}_E:=\op{ker}(\hat{L}-E)$.  When
 $E=\hbar(n+1)$, then $\op{dim}(\mathcal{H}_E)=n+1$ (otherwise
 $\mathcal{H}_E=\{0\}$).  It will be convenient to introduce the
 ``anihilation operators'' $a_i:=\frac{1}{\sqrt{2 \hbar}} \Big( \hbar
 \frac{\partial}{\partial x_j} +x_j\Big)$, $i=1,2$, which naturally
 quantize $z_i/\sqrt{2\hbar}$, $i=1,2$ respectively.  Then
 $\hat{L}=\hbar(a_1a_1^*+a_2a_2^*-1)$. The \emph{quantization} of the
 Hamiltonians $x,\,y,\,z$ on $S^2_{E/2}$ are the restrictions to
 $\mathcal{H}_E$ of the operators:
 \begin{eqnarray}
   \hat{x}:=\frac{\hbar}{2}(a_1a_2^*+a_2a_1^*), \qquad
   \hat{y}:=\frac{\hbar}{2\ii }(a_1a_2^*-a_2a_1^*), \qquad
   \hat{z}:=\frac{\hbar}{2}(a_1a_1^*-a_2a_2^*).
 \end{eqnarray}
 This definition makes sense because $\mathcal{H}_E$ is stable under
 the action of $\hat{x},\hat{y},\hat{z}$. This can be checked right
 away using the commutation relations $[a_j,a_j^*]=1$, but it will
 also follow from the explicit action of these operators, as explained
 in Section~\ref{sec:joint} below.

 Of course, in $\R^2_{(u,v)}$, the quantization of $v$ is $\hat{v}:=
 (\frac{\hbar}{\ii }\frac{\partial}{\partial u})$ and the quantization
 $\hat{u}$ of $u$ is the multiplication by $u$ (that we simply denote
 by $u$). Thus we have the very natural definition:

\begin{definition}
  The \emph{quantization} of $S^2_{E/2} \times \R^2$ is the (infinite
  dimensional) Hilbert space $ \mathcal{H}_E \otimes
  \op{L}^2(\R)\subset \op{L}^2(\R^2) \otimes \op{L}^2(\R)$.  The
  \emph{quantization of $J$} is the operator $ \hat{J}=\op{Id} \otimes
  \Big( -\frac{\hbar^2}{2}\frac{\partial^2}{\partial u^2}
  +\frac{u^2}{2} \Big) + (\hat{z} \otimes \op{Id})$.  The
  \emph{quantization of $H$} is the operator $
  \hat{H}=\frac{1}{2}(\hat{x}\otimes u + \hat{y} \otimes
  (\frac{\hbar}{\ii }\frac{\partial}{\partial u})).  $
\end{definition}

This definition depends on the energy $E$, which will be fixed
throughout the paper. For the numerical computations, we have taken
$E=2$, which corresponds to the quantization of the standard sphere
$x^2+y^2+z^2=1$.

\begin{lemma} \label{id0:lem} The operators $\hat{H}$ and $\hat{J}$
  commute, i.e.  we have the identity $[\hat{H},\, \hat{J}]=0$, both
  in the functional analysis sense (\emph{ie.} as an unbounded
  operator on a dense domain), and in the algebraic sense, as a
  bracket in the Lie algebra of polynomial differential operators.
\end{lemma}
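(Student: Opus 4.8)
The plan is to split Lemma~\ref{id0:lem} into a purely algebraic commutator identity, valid inside the Lie algebra of polynomial differential operators, and a functional\--analytic step that upgrades it to commutation of the self\--adjoint extensions. Abbreviate $\hat v:=\frac{\hbar}{\ii}\frac{\partial}{\partial u}$ and $\hat N_u:=-\frac{\hbar^2}{2}\frac{\partial^2}{\partial u^2}+\frac{u^2}{2}$, so that $\hat J=\op{Id}\otimes\hat N_u+\hat z\otimes\op{Id}$ and $\hat H=\frac12(\hat x\otimes u+\hat y\otimes\hat v)$. The algebraic identity will rest on two families of structure relations. On the $\op{L}^2(\R)$ factor we have the canonical and harmonic\--oscillator relations $[u,\hat v]=\ii\hbar$, $\,[\hat N_u,u]=-\ii\hbar\,\hat v$ and $[\hat N_u,\hat v]=\ii\hbar\,u$, all immediate from $[\frac{\partial}{\partial u},u]=1$. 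On the $\mathcal H_E$ factor we have the $\mathfrak{su}(2)$\--type relations $[\hat x,\hat y]=-\ii\hbar\hat z$, $\,[\hat y,\hat z]=-\ii\hbar\hat x$ and $[\hat z,\hat x]=-\ii\hbar\hat y$, obtained by writing $\hat x,\hat y,\hat z$ in terms of the $a_j,a_j^*$ and using $[a_j,a_j^*]=1$, exactly as in the discussion preceding this lemma.

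Granting these, the computation is short. Splitting $[\hat H,\hat J]=[\hat H,\op{Id}\otimes\hat N_u]+[\hat H,\hat z\otimes\op{Id}]$, the first bracket only sees the $u$\--factors,
\[
[\hat H,\op{Id}\otimes\hat N_u]=\tfrac12\big(\hat x\otimes[u,\hat N_u]+\hat y\otimes[\hat v,\hat N_u]\big)=\tfrac{\ii\hbar}{2}\big(\hat x\otimes\hat v-\hat y\otimes u\big),
\]
while the second only sees the $\mathcal H_E$\--factors,
\[
[\hat H,\hat z\otimes\op{Id}]=\tfrac12\big([\hat x,\hat z]\otimes u+[\hat y,\hat z]\otimes\hat v\big)=\tfrac{\ii\hbar}{2}\big(\hat y\otimes u-\hat x\otimes\hat v\big).
\]
The two contributions cancel, so $[\hat H,\hat J]=0$. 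Every operator here is a polynomial differential operator, so this already establishes the algebraic half of the lemma.

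For the functional\--analytic statement, diagonalize the Hermitian matrix $\hat z$ on the finite\--dimensional space $\mathcal H_E$, say with orthonormal eigenbasis $w_1,\dots,w_{n+1}$ and eigenvalues $\zeta_1,\dots,\zeta_{n+1}$, and let $\{h_k\}_{k\in\N}$ be the orthonormal Hermite basis of eigenfunctions of $\hat N_u$ in $\op{L}^2(\R)$. Then $\mathcal D_0:=\op{span}\{w_j\otimes h_k\}$ is a dense subspace of $\mathcal H_E\otimes\op{L}^2(\R)$, invariant under $u$ and $\hat v$ (which send $h_k$ into $\op{span}\{h_{k-1},h_{k+1}\}$), hence under $\hat J,\hat H$ and $\hat N_u$, and on it $\hat H$ and $\hat J$ are symmetric. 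Each $w_j\otimes h_k$ is an eigenvector of $\hat J$ with eigenvalue $\hbar(k+\frac12)+\zeta_j$; since only finitely many pairs $(j,k)$ produce a given value, $\hat J$ is essentially self\--adjoint on $\mathcal D_0$, with pure point spectrum and finite\--dimensional eigenspaces $\mathcal E_\lambda\subset\mathcal D_0$. By the algebraic identity $\hat H$ preserves $\mathcal D_0$ and commutes with $\hat J$ there, hence preserves each $\mathcal E_\lambda$; being symmetric on the finite\--dimensional $\mathcal E_\lambda$, it admits a real orthonormal eigenbasis there. Collecting these over all $\lambda$ yields a complete orthonormal basis of $\mathcal H_E\otimes\op{L}^2(\R)$ consisting of joint eigenvectors of $(\hat J,\hat H)$, all lying in $\mathcal D_0$. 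Hence $\hat H$ is likewise essentially self\--adjoint on $\mathcal D_0$, its closure is diagonal in this common basis, and it therefore commutes with the closure of $\hat J$ in the strong resolvent (equivalently, spectral projection) sense; since $\mathcal D_0$ is a core for the operators $\hat H,\hat J$ defined on $\mathcal H_E\otimes\mathcal S(\R)$ — a point checked by a standard relative\--boundedness argument, $\hat H$ being $\hat J$\--bounded with relative bound zero — these closures are the operators of the statement.

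The delicate point is this last part: commutation on a common invariant core does not in general force the self\--adjoint extensions to commute, and what makes the argument work here is that $\hat J$ has discrete spectrum with finite\--dimensional eigenspaces sitting inside the core, so that the reduction to finite\--dimensional blocks — on which symmetry is self\--adjointness and commuting operators are simultaneously diagonalizable — is legitimate. The algebraic computation of $[\hat H,\hat J]$, by contrast, is routine once the $\mathfrak{su}(2)$\-- and Heisenberg\--type relations above are in hand.
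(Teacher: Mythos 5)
Your algebraic computation is correct and is essentially the paper's own argument: the same four commutators $[u,\hat{N}]=\ii\hbar\hat{v}$, $[\hat{v},\hat{N}]=-\ii\hbar u$, $[\hat{x},\hat{z}]=\ii\hbar\hat{y}$, $[\hat{y},\hat{z}]=-\ii\hbar\hat{x}$ produce the same cancellation; you merely organize it more cleanly by splitting $[\hat{H},\hat{J}]=[\hat{H},\op{Id}\otimes\hat{N}_u]+[\hat{H},\hat{z}\otimes\op{Id}]$ and applying the tensor Leibniz rule, where the paper expands everything on vectors $f\otimes g$ with $g\in\op{C}^\infty_0(\R)$. Where you genuinely go beyond the paper is the functional\--analytic half: the paper contents itself with verifying the identity on a dense domain, while you supply the missing argument that the self\--adjoint closures actually commute in the spectral sense. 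Your route --- essential self\--adjointness of $\hat{J}$ on the span of $w_j\otimes h_k$ via a complete eigenbasis, finite\--dimensional $\hat{J}$\--eigenspaces contained in the core, invariance of these under $\hat{H}$ from the algebraic identity, and simultaneous diagonalization block by block --- is sound, and you correctly flag that commutation on a common invariant core does not by itself imply commuting spectral projections; it is the discreteness of $\operatorname{spec}(\hat{J})$ with finite multiplicity that rescues the argument. The only soft spot is the final appeal to a ``standard relative\--boundedness argument'' to identify the closures with the operators of the statement; this is believable (since $u$ and $\hbar\partial_u$ are infinitesimally $\hat{N}_u$\--bounded and $\hat{z}$ is bounded), but it is asserted rather than proved --- which is still more than the paper offers, as the paper leaves the domain question implicit.
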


\begin{proof}
  It is enough to show that $[\hat{H},\, \hat{J}]=0$ holds on elements
  of the form $f \otimes g$, where $f$ is any element in
  $\mathcal{H}_E$, and $g\in \op{C}^\infty_0(\R)$. And indeed,
  \begin{eqnarray}
    [\hat{H},\, \hat{J}] (f\otimes g) &=& (\hat{H}\hat{J}-\hat{J}\hat{H})(f \otimes g) 
    = \hat{H}\hat{J}(f \otimes g)-\hat{J}\hat{H}(f \otimes g) \nonumber \\
    &=&\hat{H}(f \otimes \hat{N}g +(\hat{z}f)\otimes g)
    -\frac{\hat J}{2}(\hat{x}f\otimes ug +\hat{y}f \otimes \hat{v}g) \nonumber \\
    &=& \frac{1}{2} (\hat{x}f\otimes u \hat{N}g + \hat{x}\hat{z}f
    \otimes ug + \hat{y}f \otimes \hat{v}Ng
    +\hat{y}\hat{\xi}f \otimes \hat{v}g) \nonumber \\
    &-&\frac{1}{2}(\hat{x}f \otimes \hat{N} ug+\hat{y}f \otimes \hat{N}
    \hat{v} g
    +\hat{z}\hat{x}f \otimes ug +\hat{z}\hat{y}f\otimes \hat{v}g) \nonumber \\
    &=&\hat{x}f \otimes [u,\, \hat{N}]g +[\hat{x},\, \hat{z}]f \otimes
    ug + \hat{y}f \otimes [\hat{v},\, \hat{N}]g+[\hat{y},\, \hat{z}]f
    \otimes \hat{v}g. \label{star:for}
  \end{eqnarray}
  As before, we have denoted
  $\hat{N}:=-\frac{\hbar^2}{2}\frac{\partial^2}{\partial
    u^2}+\frac{u^2}2$.  Now
  $$
  [u,\,\hat{N}]f =
  u\Big(-\frac{\hbar^2}{2}\frac{\op{d}^2}{\op{d}\!u^2} +
  \frac{u^2}2\Big)f -
  \Big(-\frac{\hbar^2}{2}\frac{\op{d}^2}{\op{d}\!u^2} +
  \frac{u^2}2\Big)uf = \frac{\hbar^2}{2}\Big( -u
  \frac{\op{d}^2}{\op{d}\!u^2} + \frac{\op{d}^2}{\op{d}u^2}u\Big)f
  $$
  and
 $$
 \frac{\op{d}^2}{\op{d}\!u^2}(uf)=f\frac{\op{d}^2
   u}{\op{d}\!u^2}+2\frac{\op{d} f}{\op{d}\!u}
 \frac{\op{d} u}{\op{d}\!u}+u\frac{\op{d}^2 f}{\op{d}\!u^2}
 =2\frac{\op{d}\!f}{\op{d}\!u}+u\frac{\op{d}^2 f}{\op{d}\!u^2}.
    $$ 
    Hence $ [u,\, \hat{N}]f=\frac{\hbar^2}{2}(2
    \frac{\op{d}\!f}{\op{d}\!u})=\hbar^2\frac{\op{d}}{\op{d}\!u}(f).$
    Therefore $ [u,\, \hat{N}]=\ii {\hbar}\hat{v}.$ Similarly,
    $[\hat{v},\hat{N}]=-\ii \hbar u$. It is also standard to check
    that the ``angular momentum variables'' $(x,y,z)$ satisfy~:
    $[\hat{y},\, \hat{z}]=-\ii {\hbar}\hat{x}$ and $[\hat{x},\,
    \hat{z}]=\ii {\hbar}\hat{y}$.

    Hence expression (\ref{star:for}) equals
$$
\hat{x}f \otimes (\ii \hbar\hat{v})g +(\ii \hbar\hat{y})f\otimes ug
+\hat{y}f \otimes (-\ii {\hbar}u)g + (-\ii \hbar\hat{x})f \otimes
\hat{v}g=0.
$$
The result follows.
\end{proof}

\begin{remark}
  Although the proof of Lemma \ref{id0:lem} is interesting on its own,
  there is a theoretical reason for this lemma to be true, because our
  operators all derive from Weyl quantization of polynomial. And for
  such operators the following result is known: suppose that $H_1$ is
  a quadratic Hamiltonian and $H_2$ is any polynomial Hamiltonian
  function such that $\{H_1,\,H_2\}=0$.  Then Moyal's
  formula~\cite{weyl-qm,moyal,groenewold} yields, formally,
  $[\hat{H}_1,\, \hat{H}_2]=0$.  In our case $J$ is quadratic in the
  variables $(u,v,x_1,x_2,\xi_1,\xi_2)$. This gives an alternative
  proof of Lemma \ref{id0:lem}.
\end{remark}

\subsection{Joint spectrum of $\hat{J}, \, \hat{H}$}
\label{sec:joint}

We have left to find the spectrum of $\hat{H}$ and of
$\hat{J}$. First, we conjugate by the unitary transform in
$\textup{L}^2(\R^2)$~:
\[
U : f(x_1,x_2) \to \sqrt{\hbar}f(\sqrt{\hbar}x_1,\sqrt{\hbar}x_2).
\]
This has the effect of setting $\hbar=1$ in the operator $a_j$~:
\[
U a_j U^* = \frac{1}{\sqrt{2}}\left(\deriv{}{x_j}+x_j\right) =: A_j.
\]

Next, it is convenient to use the Bargmann
representation~\cite{bargmann}, which states that the operator $A_j$
defined above and its adjoint $A_j^*$ are unitarily equivalent to the
operators $\deriv{}{z_j}$ and $z_j$, respectively, acting on the
Hilbert space of holomorphic functions on two variables
$\textup{L}^2_{\textup{hol}}(\C^2,\,
\pi^{-1}\op{e}^{-\abs{z}^2})$. (The notation $z_j$ here is not exactly
the same as the initial one in
section~\ref{sec:harmonic_quantization}, but we keep it for
simplicity.)

The following lemma is standard.
\begin{lemma}[\cite{bargmann}]
  The function $
  \frac{z_1^{\alpha_1}z_2^{\alpha}}{\sqrt{\alpha_1!\alpha_2!}}=\frac{z^{\alpha}}{\sqrt{\alpha!}},
  $ where $\alpha=(\alpha_1,\, \alpha_2)$, is an eigenfunction of
  $\hat{L}$ with norm $1$ and eigenvalue $\hbar(\alpha_1+\alpha_2+1)$.
\end{lemma}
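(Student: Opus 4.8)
The plan is to transport the identity $\hat L=\hbar(a_1a_1^*+a_2a_2^*-1)$ through the two unitary conjugations introduced just above and then read off the action on monomials directly. First I would apply the unitary $U$: since it sends $a_j\mapsto A_j$ (hence $a_j^*\mapsto A_j^*$) and $\hat L$ is a polynomial in $a_1,a_1^*,a_2,a_2^*$ with constant coefficients, $\hat L$ is carried to $\hbar(A_1A_1^*+A_2A_2^*-1)$. Next I would pass to the Bargmann representation, under which $A_j$ becomes $\deriv{}{z_j}$ and $A_j^*$ becomes multiplication by $z_j$. Since $\deriv{}{z_j}(z_jf)=f+z_j\deriv{f}{z_j}$, the operator $A_jA_j^*$ acts on holomorphic functions as $1+z_j\deriv{}{z_j}$, and therefore $\hat L$ acts as $\hbar\bigl(1+z_1\deriv{}{z_1}+z_2\deriv{}{z_2}\bigr)$.

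Applying this to the monomial $z^\alpha=z_1^{\alpha_1}z_2^{\alpha_2}$ and using $z_j\deriv{}{z_j}z^\alpha=\alpha_jz^\alpha$ gives $\hat L\,z^\alpha=\hbar(\alpha_1+\alpha_2+1)\,z^\alpha$, which is the asserted eigenvalue. For the norm, I would pass to polar coordinates $z_j=r_j\op{e}^{\ii\theta_j}$ and integrate out the angles (the cross terms vanish by orthogonality of the characters $\op{e}^{\ii k\theta_j}$), so that the Bargmann norm of $z^\alpha$ reduces to
\[
\norm{z^\alpha}^2=\prod_{j=1}^{2}\Bigl(2\int_0^\infty r^{2\alpha_j+1}\op{e}^{-r^2}\,\op{d}\!r\Bigr)=\prod_{j=1}^{2}\int_0^\infty s^{\alpha_j}\op{e}^{-s}\,\op{d}\!s=\alpha_1!\,\alpha_2!,
\]
the middle step being the substitution $s=r^2$ and the last the Gamma integral $\int_0^\infty s^{\alpha_j}\op{e}^{-s}\,\op{d}\!s=\alpha_j!$. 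Dividing by $\sqrt{\alpha_1!\alpha_2!}$ then yields a unit vector, which finishes the argument.

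There is no genuine obstacle here; this is standard bookkeeping, and the only thing to watch is keeping the normalization conventions straight: the $\hbar$\--rescaling hidden inside $U$, the Gaussian weight $\pi^{-1}$ per complex variable that is precisely what makes the Bargmann transform unitary, and the fact that all the operator identities above are asserted on the dense domain of polynomials, where $\deriv{}{z_j}$ and multiplication by $z_j$ are everywhere defined. Alternatively, one could avoid the explicit integral by noting that $z^\alpha/\sqrt{\alpha_1!\alpha_2!}$ is the Bargmann transform of the normalized two\--dimensional Hermite basis and invoking unitarity of that transform; but the direct polar\--coordinate computation is shorter and self\--contained.
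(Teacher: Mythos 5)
Your argument is correct and follows the same route as the paper's proof: pass to the Bargmann picture where $\hat L=\hbar(z_1\deriv{}{z_1}+z_2\deriv{}{z_2}+1)$, act on the monomial, and compute the Gaussian norm. You simply supply the details the paper leaves implicit (the conjugation by $U$, the commutator $\deriv{}{z_j}z_j=1+z_j\deriv{}{z_j}$, and the polar-coordinate evaluation of $\norm{z^\alpha}^2=\alpha!$), all of which check out.
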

\begin{proof}
  The function $z_i^{\alpha_i}$ is an eigenfunction of
  $z_i\frac{\partial}{\partial z_i}$ with eigenvalue $\alpha_i$. Since
  $\hat{L}=\hbar (z_1 \frac{\partial}{\partial z_1}+z_2
  \frac{\partial}{\partial z_2}+1)$, we get
  $\hat{L}(z^\alpha)=\hbar(\alpha_1+\alpha_2+1)z^\alpha$.

  We can compute
  $\norm{z^{\alpha}}_{\textup{L}^2_{\textup{hol}}(\C^2,\,
    \pi^{-1}\op{e}^{-\abs{z}^2})}^2=\alpha!$.  Therefore the function
  $\frac{z^{\alpha}}{\sqrt{\alpha!}}$ is a normalized eigenfunction of
  $\hat{L}$.
\end{proof}

Next we find the eigenspace of $\hat{L}$ for the eigenvalue
$\hbar(n+1)$. Since the monomials
$\{z^\alpha/\sqrt{\alpha!}\}_{\alpha\in\N^2}$ form a Hilbert basis of
the Bargmann space, the space $\mathcal{H}_E=\ker(\hat{L}-\hbar(n+1))$
is simply given by
$$
\mathcal{H}_E=\op{span}\{\frac{z^{\alpha}}{\sqrt{\alpha!}} \, | \,
\alpha_1+\alpha_2=n\},
$$
thus it is the space of homogeneous polynomials of degree $n$ in
$\mathbb{C}^2$. We will use for it the following basis~:
\[
\{z_2^n,\, z_1^nz_2^{n-1},\, \ldots,\, z_1^{n-1}z_2,\, z_1^n\}.
\]
  
In order to understand the operator $\hat{H}$, we need to consider
$\hat{z}$ and $\hat{N}$.  The restriction of the operator
$\hat{z}=\frac{\hbar}{2}(a_1a_1^*-a_2a_2^*)$ to the Hilbert space
$\mathcal{H}_E$ in given in terms of this polynomial basis by $
\hat{z}(z_1^k z_2^{n-k})=\frac{\hbar}{2} (k-(n-k))z_1^kz_2^{n-k}.  $
It follows that the matrix of
$\hat{z}=\frac{\hbar}{2}(a_1a_1^*-a_2a_2^*)$ relative to this basis is
the diagonal matrix
\[ \frac{\hbar}{2} \left( \begin{array}{ccccccccc}
    -n   & 0    & \dots & &&&0 \\
    0     & 2-n      &  0  &      &               &&  0\\
    0 & 0 &   4-n & 0              && & 0\\
    & &     & &\\
    \vdots & \vdots & \vdots  &\ddots &&\vdots &\vdots\\
    & &                     &  &&&0 \\
    0& 0& \dots & & & 0& n
  \end{array} \right)\]
Notice that this shows that $\mathcal{H}_E$ is indeed invariant
under the action of $\hat{z}$. Of course, a similar calculation can be
done for $\hat{x}$ and $\hat{y}$ (see the proof of
Proposition~\ref{matrix:prop} below). Notice also that the
eigenvalues of 
$\hat{z}$ range from $-\frac{\hbar}{2}n$ to  $\frac{\hbar}{2}n$; 
in the case of the
standard sphere $S^2$ (with $E=2$), we have the relation
$E=2=\hbar(n+1)$. Therefore the eigenvalues of $\hat{z}$ range from
$-\frac{n}{n+1}$ to $\frac{n}{n+1}$. In the semiclassical limit
$n\to\infty$, we recover the classical range $[-1,1]$ of the
hamiltonian $z$ on $S^2$.

Next we consider the Bargmann representation for
$\hat{N}=\frac{\hat{u}^2+\hat{v}^2}{2}$. This time, we act of the
Hilbert space $\textup{L}^2_{\textup{hol}}(\C_{\tau},\,
\pi^{-1}\op{e}^{-\abs{\tau}^2})$ and we obtain $ \hat{N}=\hbar (\tau
\frac{\partial}{\partial \tau}+\frac{1}{2}).  $

The eigenfunctions of $\hat{N}$ are $\frac{\tau^\ell}{\sqrt{\ell!}}$
corresponding to the eigenvalue $\hbar(k+\frac{1}{2})$.

\begin{lemma}
  \label{lemm:J-eigenspace}
  The spectrum of $\hat{J}$ is discrete, and we have
  \[
  \textup{spec}(\hat{J}) = \hbar\left(\frac{1-n}{2} + \N\right).
  \]
  More precisely, for a fixed value $\lambda\in \hbar(\frac{1-n}{2} +
  \N)$, let $\mathcal{E}_{\lambda}:=\ker(\hat{J}-\lambda)$. Then
$$
\mathcal{E}_{\lambda}= \op{span}\Big\{\tau^{\ell} \otimes
z_1^kz_2^{n-k} \,\,\, |\qquad
\hbar(\ell+\frac{1}{2}+k-\frac{n}{2})=\lambda; \quad 0 \le k \leq n;
\quad \ell \geq 0 \Big\}.
$$
In particular $\mathcal{E}_{\lambda}$ has dimension
$1+\min(n,\frac{\lambda}{\hbar} + \frac{n-1}{2})$.
\end{lemma}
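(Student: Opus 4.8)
The plan is to diagonalize $\hat J$ explicitly in the Bargmann tensor basis constructed above, and then read off both the spectrum and the eigenspaces. Recall that $\hat J = \hat z \otimes \op{Id} + \op{Id} \otimes \hat N$ acting on $\mathcal{H}_E \otimes \op{L}^2(\R)$, where $\hat N = -\frac{\hbar^2}{2}\frac{\partial^2}{\partial u^2} + \frac{u^2}{2}$. On the first factor we use the basis $\{z_1^k z_2^{n-k}\}_{0 \le k \le n}$ of homogeneous polynomials of degree $n$, on which, by the diagonal matrix displayed above, $\hat z$ acts by $\hat z(z_1^k z_2^{n-k}) = \hbar(k - \tfrac{n}{2})\, z_1^k z_2^{n-k}$. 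On the second factor, the Bargmann picture of the harmonic oscillator gives $\hat N = \hbar(\tau \tfrac{\partial}{\partial \tau} + \tfrac12)$, with orthonormal eigenbasis $\{\tau^\ell/\sqrt{\ell!}\}_{\ell \ge 0}$ and $\hat N \tau^\ell = \hbar(\ell + \tfrac12)\tau^\ell$.

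Since each of these two families is a Hilbert basis of its factor, the vectors $\tau^\ell \otimes z_1^k z_2^{n-k}$ with $\ell \ge 0$ and $0 \le k \le n$ form a Hilbert basis of $\mathcal{H}_E \otimes \op{L}^2(\R)$, and each one is an eigenvector:
\[
\hat J\big(\tau^\ell \otimes z_1^k z_2^{n-k}\big) = \hbar\Big(\ell + \tfrac12 + k - \tfrac{n}{2}\Big)\, \tau^\ell \otimes z_1^k z_2^{n-k}.
\]
Thus $\hat J$ is diagonalized by a complete orthonormal system; it is therefore essentially self-adjoint on the algebraic span of this basis, its spectrum is the closure of the set of the numbers above, and $\ker(\hat J - \lambda)$ is the span of exactly those basis vectors whose eigenvalue equals $\lambda$. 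Setting $m := \ell + k$, the pair $(\ell, k)$ ranges over $\N \times \{0, \dots, n\}$ so that $m$ ranges over all of $\N$, and the eigenvalue becomes $\hbar\big(m + \tfrac{1-n}{2}\big)$; hence $\op{spec}(\hat J) = \hbar\big(\tfrac{1-n}{2} + \N\big)$, which is a discrete set of eigenvalues of finite multiplicity.

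Finally, fix $\lambda = \hbar(\tfrac{1-n}{2} + m)$, so that $m = \tfrac{\lambda}{\hbar} + \tfrac{n-1}{2} \in \N$. By the preceding paragraph, $\mathcal{E}_\lambda$ is spanned precisely by the $\tau^\ell \otimes z_1^k z_2^{n-k}$ with $\ell + k = m$, $\ell \ge 0$, $0 \le k \le n$, which is the asserted description. The admissible indices are $k \in \{0, 1, \dots, \min(n, m)\}$ (we need $k \le n$ and $\ell = m - k \ge 0$), so $\dim \mathcal{E}_\lambda = \min(n, m) + 1 = 1 + \min\!\big(n, \tfrac{\lambda}{\hbar} + \tfrac{n-1}{2}\big)$, as claimed. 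The computation itself is elementary; the one step deserving care — and the point I would regard as the main obstacle — is the functional-analytic claim that the unbounded operator $\hat J$ is genuinely fully diagonalized by this basis, so that its spectrum is \emph{discrete} and $\ker(\hat J - \lambda)$ contains nothing beyond the listed eigenvectors. This follows once one notes that $\hat J$ is the sum of the bounded self-adjoint operator $\hat z \otimes \op{Id}$ on the finite-dimensional factor $\mathcal{H}_E$ and the essentially self-adjoint harmonic oscillator $\op{Id} \otimes \hat N$, which has purely discrete spectrum; the tensor eigenbasis is then complete for the closure of $\hat J$, whose spectrum therefore equals the already-closed set $\hbar(\tfrac{1-n}{2} + \N)$.
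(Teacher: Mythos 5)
Your proposal is correct and follows essentially the same route as the paper: diagonalizing $\hat J$ on the double Bargmann tensor basis $\tau^\ell \otimes z_1^k z_2^{n-k}$, reading off the eigenvalue $\hbar(\ell+\tfrac12+k-\tfrac{n}{2})$, and counting the admissible pairs $(\ell,k)$ to get the dimension $1+\min(n,\tfrac{\lambda}{\hbar}+\tfrac{n-1}{2})$. Your added remarks on completeness of the eigenbasis and essential self-adjointness only make explicit what the paper summarizes as ``$\hat J$ admits a complete set of eigenvectors.''
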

\begin{proof}
  In the double Bargmann representation, we have
  \[
  \hat{J} = \op{Id}\otimes (\hbar (\tau \frac{\partial}{\partial
    \tau}+\frac{1}{2})) + \frac{\hbar}{2}(z_1\deriv{}{z_1}-
  z_2\deriv{}{z_2}) \otimes \op{Id}.
  \]
  Hence a simple computation gives
  \begin{eqnarray} \label{eigenfunctions:ex} \hat{J}(\tau^{\ell}
    \otimes z_1^kz_2^{n-k})=
    \hbar\left(\ell+\frac{1}{2}+k-\frac{n}{2}\right) (\tau^{\ell}
    \otimes z_1^kz_2^{n-k})
  \end{eqnarray}
  so the corresponding eigenvalues are
  $\hbar(\ell+\frac{1}{2}+k-\frac{n}{2})$ where $0 \le k \le n$ and
  $n,\,\ell \ge 0$. This shows that $\hat{J}$ admits a complete set of
  eigenvectors. Hence $\ker(\hat{J}-\lambda)$ is spanned by the set of
  eigenvectors coming from this family and corresponding to the
  eigenvalue $\lambda$. This space is finite dimensional (hence
  $\hat{J}$ has discrete spectrum), and its dimension is the number of
  solutions $(k,\ell)$ to the equation
  $\hbar(\ell+\frac{1}{2}+k-\frac{n}{2})=\lambda$ with constraints $0
  \le k \leq n;~~ \ell \geq 0$, which is precisely
  $1+\min(n,\frac{\lambda}{\hbar} + \frac{n-1}{2})$.
\end{proof}

The fact that $\mathcal{E}_{\lambda}$ is finite dimensional should be
compared to the fact that the classical hamiltonian $J$ is proper.

\begin{cor} Given any $n\in\N$, and any $\lambda\in
  \hbar(\frac{1-n}{2} + \N)$, the ordered set
$$
B_{\lambda}:=\Big\{e_{\ell,k}:=\frac{\tau^{\ell}}{\sqrt{\ell!}}
\otimes \frac{z_1^kz_2^{n-k}}{\sqrt{k!(n-k)!}}  \,\,\, |\,\, \, k=0,\,
1,\, \ldots, \op{min}(n,\,
\frac{\lambda}{\hbar}+\frac{n}{2}-\frac{1}{2}),\,\,\,\textup{and}\,\,\,
\ell=\frac{\lambda}{\hbar}+\frac{n}{2}-\frac{1}{2}-k \Big\}.
$$
is an orthonormal basis of $\mathcal{E}_{\lambda}$.
\end{cor}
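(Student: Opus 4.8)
The plan is to read off $\mathcal{E}_\lambda$ from Lemma~\ref{lemm:J-eigenspace} and combine it with the elementary orthonormality of monomials in the Bargmann picture. First I would set $\ell_0:=\frac{\lambda}{\hbar}+\frac{n-1}{2}$ and solve the eigenvalue constraint $\hbar(\ell+\frac12+k-\frac n2)=\lambda$ appearing in Lemma~\ref{lemm:J-eigenspace} for $\ell$: this gives $\ell=\ell_0-k$, so that the joint constraints $0\le k\le n$ and $\ell\ge 0$ become equivalent to $0\le k\le\min(n,\ell_0)$. Hence the spanning monomials $\tau^{\ell}\otimes z_1^kz_2^{n-k}$ of $\mathcal{E}_\lambda$ produced by the Lemma are in bijection with the integers $k\in\{0,1,\ldots,\min(n,\ell_0)\}$, which (using $\frac n2-\frac12=\frac{n-1}2$) is exactly the index set defining $B_\lambda$; moreover by equation~(\ref{eigenfunctions:ex}) each normalized monomial $e_{\ell,k}$ with $\ell=\ell_0-k$ is indeed an eigenvector of $\hat J$ for the eigenvalue $\lambda$, so $B_\lambda\subset\mathcal{E}_\lambda$, and $B_\lambda$ has $1+\min(n,\ell_0)$ elements, which equals $\dim\mathcal{E}_\lambda$ by the Lemma.

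It remains to verify orthonormality, which I would do in the double Bargmann representation, where the ambient space is the Hilbert tensor product $\textup{L}^2_{\textup{hol}}(\C_\tau,\pi^{-1}\op{e}^{-\abs{\tau}^2})\otimes\textup{L}^2_{\textup{hol}}(\C^2,\pi^{-1}\op{e}^{-\abs{z}^2})$. In each factor the monomials, normalized to have unit norm, form a Hilbert basis — namely $\{\tau^\ell/\sqrt{\ell!}\}_{\ell\ge 0}$ in the first and $\{z^\alpha/\sqrt{\alpha!}\}_{\alpha\in\N^2}$ in the second, both recalled above — and in particular each of these families is orthonormal. Since inner products multiply across a Hilbert tensor product, each $e_{\ell,k}=\frac{\tau^\ell}{\sqrt{\ell!}}\otimes\frac{z_1^kz_2^{n-k}}{\sqrt{k!(n-k)!}}$ has norm $1$, and two distinct elements of $B_\lambda$ carry different values of $k$ — hence different $\ell=\ell_0-k$ as well — so they are orthogonal in at least one tensor factor and therefore orthogonal. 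An orthonormal subset of $\mathcal{E}_\lambda$ whose cardinality equals $\dim\mathcal{E}_\lambda$ is automatically an orthonormal basis, which completes the argument.

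I do not anticipate any genuine obstacle here: the statement is essentially bookkeeping on top of Lemma~\ref{lemm:J-eigenspace}, equation~(\ref{eigenfunctions:ex}), and the Bargmann norm formulas. The only points deserving a little care are checking that the upper index $\min(n,\frac\lambda\hbar+\frac n2-\frac12)$ in the statement coincides with the $\min(n,\frac\lambda\hbar+\frac{n-1}2)$ of Lemma~\ref{lemm:J-eigenspace}, and being explicit that the relevant completion is the Hilbert (rather than merely algebraic) tensor product, so that orthonormal bases of the two factors really do yield an orthonormal family of product vectors.
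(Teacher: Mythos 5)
Your proof is correct and follows exactly the route the paper intends: the corollary is stated as an immediate consequence of Lemma~\ref{lemm:J-eigenspace} (the paper gives no separate proof), and your bookkeeping — solving the eigenvalue constraint for $\ell=\ell_0-k$, matching the index set, invoking the Bargmann orthonormality of the monomials, and comparing cardinality with $\dim\mathcal{E}_\lambda$ — is precisely the verification being left to the reader.
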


Our next goal is to compute the matrix of $\hat{H}$. More precisely,
since $\hat{H}$ commutes with $\hat{J}$, the eigenspace
$\mathcal{E}_{\lambda}$ is stable by $\hat{H}$. Thus, the spectral
theory of $\hat{H}$ is merely reduced to the study of the restriction
of $\hat{H}$ to $\mathcal{E}_{\lambda}$, which we explicitly compute
below. Then the best way to depict the spectra of $\hat{J}$ and
$\hat{H}$ is to display the \emph{joint spectrum} (see
figure~\ref{fig:spectrumapprox2}), which is the set of
$(\lambda,\nu)\in\R^2$ such that, for a common eigenfunction $f$, one
has both
\[
\hat{J}f = \lambda f \quad \text{and} \quad \hat{H}f =\nu f.
\]

Let $\ell_0:=\frac{\lambda}{\hbar}+\frac{n}{2}-\frac{1}{2}$,
$\mu=\op{min}(\ell_0,n)$ and let
\begin{eqnarray}
  \beta_k:=\sqrt{(\ell_0+1-k)k(n-k+1)}.\nonumber
\end{eqnarray}

 \begin{figure}[h]
   \centering
   \includegraphics[width=0.5\linewidth]{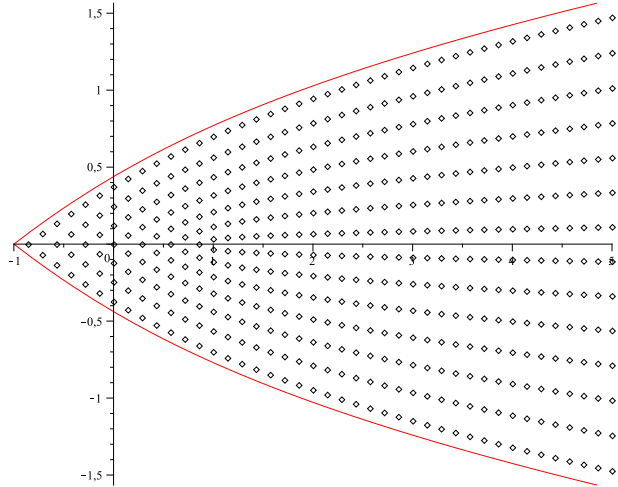}
   \caption{Semiclassical joint spectrum of $\hat{J}, \hat{H}$ and
     momentum map image juxtaposed, computed using a numerical
     diagonalization of the band matrix in
     Proposition~\ref{matrix:prop}. In all our computations we have
     chosen $E=2$, which corresponds to the quantization of the
     standard sphere $x^2+y^2+z^2=1$. This implies the relation
     $2=\hbar(n+1)$. Here $n=13$, so $\hbar\simeq 1.14$.}
   \label{fig:spectrumapprox2}
 \end{figure}

 \begin{prop} \label{matrix:prop} The matrix
   $\textup{M}_{B_{\lambda}}(\hat{H})$ of the self\--adjoint operator
   $\hat{H}$ on the basis $B_{\lambda}$ is the symmetric matrix
   \begin{eqnarray} \nonumber
     \textup{M}_{\mathcal{B}_{\lambda}}(\hat{H})=
     \Big(\frac{\hbar}{2}\Big)^{\frac{3}{2}}
     \left( \begin{array}{ccccccc}
         0   & \beta_1 & \dots & &&&0 \\
         \beta_1     & 0      &  \beta_2 &      &               &&  0\\
         0 &\beta_2&   0 &\beta_3               && & 0\\
         & &     & &\\
         \vdots & \vdots & \ddots  &\vdots &&\vdots &\vdots\\
         & &                     &  &&&\beta_{\mu} \\
         0& 0& \dots & & & \beta_{\mu}& 0
       \end{array} \right).
   \end{eqnarray}
 \end{prop}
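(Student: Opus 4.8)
The plan is to compute $\hat{H}$ explicitly in the double Bargmann representation --- holomorphic in one sphere variable and one oscillator variable --- apply the resulting first\--order differential operator to the monomial eigenbasis of $\hat{J}$ produced in Lemma~\ref{lemm:J-eigenspace}, and read off the matrix. First I would record the Bargmann symbols of the operators involved. On the sphere factor, after the unitary conjugation that sets $\hbar=1$ in the $a_j$ and the Bargmann transform, $a_j\mapsto\partial/\partial z_j$ and $a_j^*\mapsto z_j$; since $\partial_{z_i}$ commutes with $z_j$ for $i\neq j$, the operators $\hat{x},\hat{y}$ become the first\--order operators $\hat{x}=\frac{\hbar}{2}(z_2\partial_{z_1}+z_1\partial_{z_2})$ and $\hat{y}=\frac{\hbar}{2\ii}(z_2\partial_{z_1}-z_1\partial_{z_2})$, by the same computation done above for $\hat{z}$ (which in this picture is $\frac{\hbar}{2}(z_1\partial_{z_1}-z_2\partial_{z_2})$). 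On the oscillator factor, writing $b:=\frac{1}{\sqrt{2\hbar}}(u+\hbar\,\partial_u)$ one checks $\hat{N}=\hbar(b^*b+\frac12)$, so $b\mapsto\partial_\tau$ and $b^*\mapsto\tau$, whence $u=\sqrt{\hbar/2}\,(\tau+\partial_\tau)$ and $\hat{v}=\frac{\hbar}{\ii}\partial_u=\ii\sqrt{\hbar/2}\,(\tau-\partial_\tau)$.

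Substituting these into $\hat{H}=\frac12(\hat{x}\otimes u+\hat{y}\otimes\hat{v})$ and expanding, the four cross terms recombine --- the $z_2\partial_{z_1}\otimes\partial_\tau$ and $z_1\partial_{z_2}\otimes\tau$ contributions cancel --- leaving
$$\hat{H}=\Big(\tfrac{\hbar}{2}\Big)^{3/2}\big(z_2\partial_{z_1}\otimes\tau+z_1\partial_{z_2}\otimes\partial_\tau\big).$$
The cancellation is in fact forced: by Lemma~\ref{id0:lem}, $\hat{H}$ commutes with $\hat{J}$, hence preserves each eigenspace $\mathcal{E}_\lambda$, so no term shifting the total degree $k+\ell$ of a monomial may survive. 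Applying this operator to the un\--normalized basis vectors $v_k:=z_1^kz_2^{n-k}\otimes\tau^{\ell_0-k}$ of $\mathcal{E}_\lambda$ (which span $\mathcal{E}_\lambda$ by Lemma~\ref{lemm:J-eigenspace} and, rescaled, give the orthonormal basis of the Corollary, with $\ell_0=\frac{\lambda}{\hbar}+\frac{n-1}{2}$ and $0\le k\le\mu=\min(n,\ell_0)$) gives
$$\hat{H}\,v_k=\Big(\tfrac{\hbar}{2}\Big)^{3/2}\big(k\,v_{k-1}+(\ell_0-k)(n-k)\,v_{k+1}\big),$$
since $z_2\partial_{z_1}$ lowers the $z_1$\--degree by one while $\tau$ raises the $\tau$\--degree by one (landing on $v_{k-1}$), and symmetrically $z_1\partial_{z_2}\otimes\partial_\tau$ lands on $v_{k+1}$. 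In particular $\hat{H}$ is tridiagonal on $\mathcal{E}_\lambda$ with vanishing diagonal.

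Finally I would rescale to the orthonormal basis $e_k=v_k/c_k$, where $c_k:=\sqrt{k!\,(n-k)!\,(\ell_0-k)!}$ by the Bargmann norms $\|z^\alpha\|^2=\alpha!$ and $\|\tau^\ell\|^2=\ell!$. Then the superdiagonal entry is
$$\big\langle e_{k-1},\hat{H}e_k\big\rangle=\Big(\tfrac{\hbar}{2}\Big)^{3/2}k\,\frac{c_{k-1}}{c_k}=\Big(\tfrac{\hbar}{2}\Big)^{3/2}\sqrt{(\ell_0+1-k)\,k\,(n-k+1)}=\Big(\tfrac{\hbar}{2}\Big)^{3/2}\beta_k,$$
while the transposed entry equals the same value --- either by self\--adjointness of $\hat{H}$ (Theorem~\ref{thm:spectral}) or directly from the $v_{k+1}$\--coefficient above --- and all remaining entries vanish; this is exactly the asserted band matrix. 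I do not expect a genuine obstacle here: the delicate points are merely the bookkeeping of powers of $\hbar$ and of operator orderings that together produce the prefactor $(\hbar/2)^{3/2}$, and the careful tracking of the normalizations $c_k$ when symmetrizing. The one mildly surprising feature --- that a matrix which looks pentadiagonal before simplification is in fact tridiagonal --- is a reflection of $[\hat{H},\hat{J}]=0$.
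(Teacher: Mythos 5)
Your proposal is correct and follows essentially the same route as the paper: pass to the double Bargmann representation, act on the monomial basis of $\mathcal{E}_\lambda$, and renormalize via the factorial norms to extract $\beta_k$. The only difference is organizational --- you simplify $\hat{H}$ once to the operator $(\hbar/2)^{3/2}(z_2\partial_{z_1}\otimes\tau+z_1\partial_{z_2}\otimes\partial_\tau)$ before evaluating, whereas the paper expands $\hat{x}\otimes\hat{u}+\hat{y}\otimes\hat{v}$ directly against a basis vector and cancels afterwards; your remark tying the cancellation to $[\hat{H},\hat{J}]=0$ is a nice conceptual gloss on the same computation.
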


\begin{proof}
  We start by evaluating $\hat{x}$ and $\hat{y}$ on this basis:
  \begin{eqnarray}
    \hat{x}(z_1^kz_2^{n-k})&=&\frac{\hbar}{2}(kz_1^{k-1}z_2^{n-k+1}+(n-k)z_1^{k+1}z_2^{n-k-1}) 
    \nonumber \\
    \hat{y}(z_1^kz_2^{n-k})&=&\frac{\hbar}{2\ii }(kz_1^{k-1}z_2^{n-k+1})-
    (n-k)z_1^{k+1}z_2^{n-k-1}) \nonumber
  \end{eqnarray}

  We introduce:
$$
\alpha:=\frac{1}{\sqrt{2\hbar}}(u+\hbar \frac{\partial h}{\partial
  u}), \qquad \alpha^*:=\frac{1}{\sqrt{2\hbar}}(u-\hbar \frac{\partial
  h}{\partial u})
$$

Hence $u(=\hat{u})=(\alpha+\alpha^*)\sqrt{\frac{\hbar}{2}}$.  Now we
do the Bargmann representation
\begin{eqnarray}
  \hat{u}=\sqrt{\frac{\hbar}{2}}(\tau+\frac{\partial}{\partial
    \tau}), \qquad 
  \hat{v}=\frac{\hbar}{\ii }\frac{\partial}{\partial u}=
  \frac{(\alpha-\alpha^*)}{\ii}\sqrt{\frac{\hbar}{2}}=\frac{1}{\ii} \nonumber
  \sqrt{\frac{\hbar}{2}}(\frac{\partial}{\partial \tau}-\tau).
\end{eqnarray}

Hence we obtain
\begin{eqnarray}
  \hat{u}(\tau^{\ell})=\sqrt{\frac{\hbar}{2}} (\tau^{\ell+1}+\ell \tau^{\ell-1}),\,\,\,\,\,\,\,\,\,\,\,
  \hat{v}(\tau^{\ell})&=&\frac{1}{\ii }\sqrt{\frac{\hbar}{2}} (\ell \tau^{\ell-1}-\tau^{\ell+1}). \nonumber
\end{eqnarray}

In what follows, for brevity of the notation, we write
$c_k:=z_1^kz_2^{n-k}$. Note that $n$ is fixed.  Recalling
$\hat{H}=\frac{1}{2}(\hat{u}\otimes \hat{x}+\hat{v}\otimes\hat{y})$,
we get
\begin{eqnarray}
  \hat{H}(\tau^{\ell}z_1^kz_2^{n-k})&=&
  \frac{1}{2} \Big( \Big(\frac{\hbar}{2}\Big)^{3/2}
  (\tau^{\ell+1}+\ell \tau^{\ell-1})(kc_{k-1}+(n-k)c_{k+1}) \nonumber \\
  &-&\Big( \frac{\hbar}{2} \Big)^{3/2} (\ell \tau^{\ell-1}-\tau^{\ell+1})
  (kc_{k-1}-(n-k)c_{k+1})  \Big)
  \nonumber \\
  &=& \frac{1}{2}\Big(\frac{\hbar}{2}\Big)^{3/2}\Big(
  k\tau^{\ell+1}c_{k-1}+\ell k \tau^{\ell-1}c_{k-1}
  +(n-k)\tau^{\ell+1}c_{k+1}
  +\ell(n-k)\tau^{\ell-1}c_{k+1} \nonumber \\
  &-&\ell k\tau^{\ell-1}c_{k-1}+\ell(n-k)\tau^{\ell-1}c_{k+1}
  +k \tau^{\ell+1}c_{k-1}-
  (n-k)\tau^{\ell+1}c_{k+1}\Big) \nonumber \\
  &=& \Big(\frac{\hbar}{2}\Big)^{3/2}
  (k\tau^{\ell+1}c_{k-1}+(n-k)\ell \tau^{\ell-1} c_{k+1}). \label{kj}
\end{eqnarray}

Notice how this formula, together with Lemma~\ref{lemm:J-eigenspace},
confirms that $\mathcal{E}_\lambda$ is stable under $\hat{H}$.

In order to have a better numerically prepared matrix (and a
nicer-looking formula !), we next express everything in an orthonormal
basis.  Denote $ e_{\ell,k}=\frac{\tau^{\ell}}{\sqrt{\ell!}}
\frac{z_1^kz_2^{n-k}}{\sqrt{k! (n-k)!}}  $ so that $e_{\ell,k}$ is an
eigenvector of $\hat{J}$ of norm $1$:
\begin{eqnarray}
  \hat{J}(e_{\ell,k})&=&\hbar (\ell+\frac{1}{2}+k-\frac{n}{2})
  e_{\ell,k} = \lambda  e_{\ell,k}\nonumber \\
  \hat{H}(e_{\ell,k})&=& \Big(\frac{\hbar}{2}\Big)^{3/2} 
  \frac{k \tau^{\ell+1}c_{k-1}+\ell(n-k)\tau^{\ell-1}c_{k+1}}{\sqrt{\ell!k! (n-k)!}}. \label{tr}
\end{eqnarray}

On the other hand we have that $
e_{\ell+1,k-1}=\frac{\tau^{\ell+1}c_{k-1}}{\sqrt{(\ell+1)!(k-1)!(n-k+1)!}}
$ and that the first term of (\ref{tr}) is
\begin{eqnarray}
  \frac{k}{\sqrt{\ell! k! (n-k)!}}\tau^{\ell+1}c_{k-1}&=&\frac{k}{\sqrt{\ell!k!(n-k)!}}
  \sqrt{(\ell+1)!(k-1)!(n-k+1)!} e_{\ell+1,k-1} \nonumber \\
  &=&\sqrt{(\ell+1)k(n-k+1)} e_{\ell+1,k-1}. \nonumber 
\end{eqnarray}

Similarly the second term of (\ref{tr}) is
\begin{eqnarray}
  \frac{\ell (n-k)\tau^{\ell-1}c_{k+1}}{\sqrt{\ell! k! (n-k)!}}
  &=&\frac{\ell(n-k)}{\sqrt{\ell!k!(n-k)!}} \sqrt{(\ell-1)!(k+1)!(n-k-1)!} e_{\ell-1,k+1} \nonumber \\ 
  &=&\sqrt{\ell(k+1)(n-k)}e_{\ell-1,k+1}. \nonumber 
\end{eqnarray}

Since $\ell=\ell_0-k$, we get
\begin{eqnarray*}
  \hat{H}(e_{\ell,k}) & = &\Big( \frac{\hbar}{2}\Big)^{3/2}
  \Big( \sqrt{(\ell_0-k+1)k(n-k-1)} e_{\ell+1,k-1}+
  \sqrt{(\ell_0-k)(k+1)(n-k)}e_{\ell-1,k+1}
  \Big)\\
  & = & \Big( \frac{\hbar}{2}\Big)^{3/2} (\beta_k e_{\ell+1,k-1} +
  \beta_{k+1} e_{\ell-1,k+1}).
\end{eqnarray*}

This, of course, gives the statement of the proposition.
\end{proof}

\subsection{The spectrum $\Sigma(n)$ of
  $\hat{H}|_{\op{ker}(\hat{J}-\op{Id})}$}

In the next section, we will be particularly interested in the
$\hat{J}$-eigenvalue $\lambda=1$, which corresponds to the
$J$-critical value of the focus-focus point, in the classical
system. Since $E=2=\hbar(n+1)$, we see that $\ell_0=\frac{n+1}{2} +
\frac{n-1}{2}=n$. Therefore the dimension of $\ker(\hat{J}-\op{Id})$
is equal to $n+1$. Notice that, for $\lambda<1$, the dimension of
$\ker(\hat{J}-\lambda)$ is increasing linearly with slope 1 (with
respect to the parameter $k$ that we introduced above) whereas for
$\lambda>1$ this dimension is constant, equal to $n+1$. This can be
seen as a quantum manifestation of the Duistermaat-Heckmann
formula~\cite{duist-heckman}.

\section{Inverse spectral theory for quantum spin\--oscillators}

The theme of this section is to give evidence of the following
conjecture being true in the case of coupled spin oscillators:

 \begin{conj}
   A semitoric system is determined up to symplectic equivalence by
   its semiclassical joint spectrum (\emph{i.e.}  the set of points in
   $\mathbb{R}^2$ where on the $x$\--axis we have the eigenvalues
   $\lambda$ of $\hat{J}$, and on the vertical axes the eigenvalues of
   $\hat{H}$ restricted to the $\lambda$\--eigenspace of $\hat{J}$).
   From any such spectrum one can construct explicitly the associated
   semitoric system.
 \end{conj}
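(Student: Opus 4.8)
The plan is to establish the conjecture for the coupled spin-oscillator by recovering, one at a time, the three symplectic invariants listed in Theorem~\ref{mainthm} directly from the semiclassical joint spectrum of $(\hat{J},\,\hat{H})$, and then feeding these back into the existence half of the semitoric classification of \cite{san-alvaro-I,san-alvaro-II} to reconstruct the system. Since the coupled spin-oscillator has exactly one focus-focus singularity, by Theorem~\ref{mainthm} it is determined up to isomorphism by the triple consisting of the Taylor series invariant $(S)^{\infty}$, the semitoric polygon invariant $(G\times\mathcal{T})\cdot\Delta_{\scriptop{w}}$, and the height $h$; so it suffices to show that each of these is read off from the spectrum. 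The explicit band matrix of Proposition~\ref{matrix:prop}, with entries built from $\beta_k=\sqrt{(\ell_0+1-k)k(n-k+1)}$, makes every step below effective and, in the semiclassical regime $2=\hbar(n+1)$, $n\to\infty$, numerically checkable (cf.\ Figure~\ref{fig:spectrumapprox2}).

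First I would recover the semitoric polygon invariant. Rescaling the joint eigenvalues by a factor proportional to $\hbar$, the joint spectrum converges as $\hbar\to 0$ to the momentum image $F(M)$; moreover, near any regular value the local spacing of neighbouring joint eigenvalues detects the integral-affine structure carried by the regular set $B_{\op{r}}$ (this is the Bohr-Sommerfeld picture: the joint spectrum is locally an $\mathcal{O}(\hbar^2)$-deformed affine lattice of covolume $(2\pi\hbar)^2$), and the focus-focus value is visible as the single point where that lattice acquires a singularity with nontrivial affine monodromy. Cutting along the vertical half-line through this point and developing the affine structure onto $\R^2$ yields, in the limit, precisely the rational convex polygon $\Delta$ of Section~\ref{uniqueness:sec}, the residual ambiguity being exactly the $(G\times\mathcal{T})$-action. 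The height invariant $h=1$ I would extract from eigenvalue counting: by Lemma~\ref{lemm:J-eigenspace} the counting function $\lambda\mapsto\dim\mathcal{E}_\lambda$ grows linearly for $\lambda<1$ and is constant for $\lambda>1$, and this Duistermaat-Heckman type kink, multiplied by $\hbar$, reproduces in the limit the length $2$ of the vertical slice of $\Delta$ at $J=1$ together with the position of the node inside it, giving $h=1$.

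The delicate step, and the main obstacle, is recovering the Taylor series invariant, or at least its linear part $L(X,Y)=\frac{\pi}{2}X+5\ln 2\,Y$ computed in Section~\ref{taylor:sec}. Near the focus-focus value $(1,0)$ the joint spectrum is not asymptotically a regular lattice: the joint eigenvalues accumulate along curves that wind around $(1,0)$ with a logarithmic pinching, and the deviation of this pinching from the model logarithm is governed precisely by the function $S$, so its linear part $L$ is encoded in the leading correction to that local spectral pattern. Reading $L$ off requires the singular Bohr-Sommerfeld rules at a focus-focus point, which are available for $\hbar$-pseudodifferential operators by \cite{san-semi-global,san-focus} but, as the Introduction notes, are only conjectural for the Toeplitz quantization that governs the $S^2$-factor here. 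Thus the honest outcome is conditional: granting the Toeplitz analogue of those singular asymptotics, the local joint-spectral data near $(1,0)$ determines $L$ --- and one verifies, using Proposition~\ref{matrix:prop}, that the numerically diagonalized spectrum is consistent with $L(X,Y)=\frac{\pi}{2}X+5\ln 2\,Y$ --- after which the three recovered invariants, inserted into the existence theorem of \cite{san-alvaro-I}, rebuild the coupled spin-oscillator up to symplectic equivalence. Proving the missing Toeplitz singular Bohr-Sommerfeld rules near a focus-focus fibre is exactly what would upgrade this strong evidence to a theorem, and in the general case one would additionally need inverse results recovering each invariant for an arbitrary semitoric system.
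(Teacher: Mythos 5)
Your proposal matches the paper's own treatment: the statement is a conjecture, and like the paper you offer evidence for the spin-oscillator case by recovering the polygon invariant from the developed semiclassical affine structure of the joint spectrum, reading the height off the Duistermaat--Heckman kink in $\dim\mathcal{E}_\lambda$, and recovering the Taylor series invariant only conditionally on the Toeplitz analogue of the singular Bohr--Sommerfeld rules, backed by numerical diagonalization of the band matrix. The paper is somewhat more explicit on the last step (it derives the minimal eigenvalue spacing formula and the limits recovering $B_{22}$ and $a_2$), but the structure and the honest identification of the conditional gap are the same.
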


 In this section we try to convey some ideas to explicitly compute all
 the symplectic invariants from the semiclassical spectrum. It might
 not necessarily be the optimal way to prove an inverse spectral
 result, as some quantities are more easily defined implicitly rather
 than explicitly by the spectrum. But we believe that, from a quantum
 viewpoint, having constructive formulas for the symplectic invariants
 is particularly valuable.

 We emphasize the word ``semiclassical'' here~: in order to recover
 the symplectic invariants we need be able to compute the joint
 spectrum for small values of $\hbar$. What can be said for a unique,
 fixed value of $\hbar$ is much harder question.

 \subsection{Polygon and height invariant}

 Recovering the polygon invariant is probably the easiest and most
 pictorial procedure, as long as one stays on a heuristic
 level. Making the heuristic rigorous should be possible along the
 lines of the toric case explained in~\cite{san-panoramas} and
 ~\cite{san-inverse}, but we don't attempt to do it here.

 The first thing to do is to recover the image of the classical moment
 map, including the position of the singular values. This could be
 done by a local examination of density of the joint eigenvalues.

 Next, in order to recover the polygon invariant, we need to obtain
 the integral affine structure of the image of the momentum map. We
 know from~\cite{duist-cushman, san-panoramas} that the joint spectrum
 possesses a semiclassical integral affine structure on the regular
 values of the momentum map. This integral affine structure can be
 extended to the elliptic boundaries, as explained
 in~\cite{san-panoramas}. Thus, except along a vertical cut through
 the focus-focus critical value, one can develop this affine structure
 such that the joint eigenvalues become elements of the lattice $\hbar
 \Z^2$. See figure~\ref{fig:recover_polytope}.
 \begin{figure}[H]
   \centering
   \includegraphics[width=0.5\textwidth]{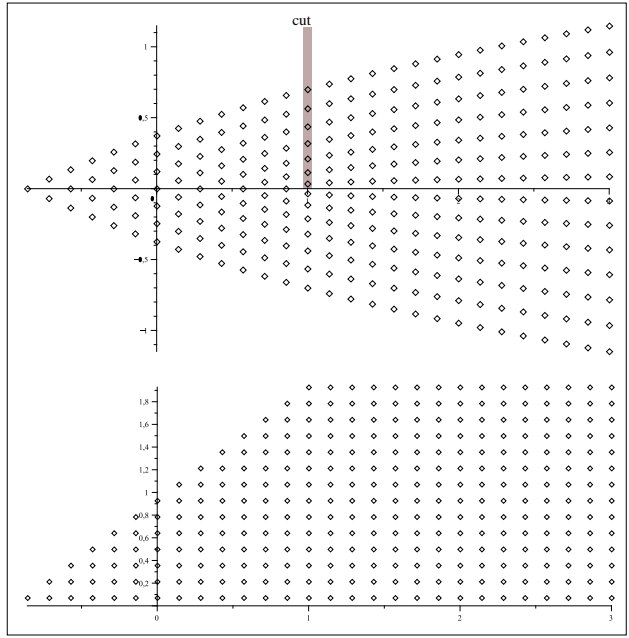}
   \caption{Recovering the polygon invariant. The top picture is the
     joint spectrum of $(\hat{J},\hat{H})$. In the bottom picture, we
     have developed the joint eigenvalues into a regular lattice. One
     can easily check on this illustration that the number of
     eigenvalues in each vertical line in the same in both pictures.}
   \label{fig:recover_polytope}
 \end{figure}
 The convex hull of the resulting set is a rational, convex polygonal
 set, depending on $\hbar$. Since the semiclassical affine structure
 is an $\hbar$-deformation of the classical affine structure, we see
 that, as $\hbar\to 0$, this polygonal set converges to the semitoric
 polygon invariant.

 \subsection{Semiclassical formula for the spectrum $\Sigma(n)$}

 In order to recover the Taylor series invariant from the spectrum, we
 need a precise description of this spectrum. There are two options~:
 either describe the spectrum in regular regions, and then take the
 limit to the focus-focus critical value; or describe the spectrum
 directly in a small neighborhood of the focus-focus value. We choose
 the second option, because it seems more appropriate for a reasonably
 accurate numerical formula for the invariants, in the spirit of
 equation~\eqref{equ:numerical_formula}.

 The drawback of this approach is that there is no result currently
 available giving the description of this spectrum. The singular
 Bohr-Sommerfeld rules of~\cite{san-focus} would give the required
 result, in case $\hat{J}$ and $\hat{H}$ were pseudodifferential
 operators. Of course they are not, since the phase space $S^2\times
 \R^2$ is not a cotangent bundle. However they are semiclassical
 Toeplitz operators, in the sense of~\cite{charles-toeplitz}, and it
 is known that the algebra of Toeplitz operators is microlocally
 equivalent to the algebra of pseudodifferential
 operators~\cite{BG}. Therefore, we propose the following conjecture.
 \begin{conj}
   The formula in Corollary 6.8 in V\~ u Ng\d oc's paper
   \cite{san-focus} holds also if the operators therein involved are
   Toeplitz instead of pseudodifferential.
 \end{conj}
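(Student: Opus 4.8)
The plan is to reduce the Toeplitz case to the pseudodifferential case already established in \cite{san-focus}, by transporting V\~u Ng\d oc's microlocal analysis through the Boutet de Monvel--Guillemin equivalence between the algebra of Toeplitz operators and the algebra of pseudodifferential operators \cite{BG}, or --- what may in the end be cleaner --- by re-running that analysis inside the semiclassical Toeplitz calculus of Charles \cite{charles-toeplitz}. One should note first that for the coupled spin--oscillator the operators $\hat J,\hat H$ of Theorem~\ref{thm:spectral} are hybrid: they are honest $\hbar$-pseudodifferential operators in the $\R^2=\op{T}^*\R$ factor and Toeplitz only in the $S^2$ factor, and tensor products of the two calculi obey the same symbolic rules (principal symbol, subprincipal symbol, composition to the first orders). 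So the whole discussion takes place in a calculus formally identical to the pseudodifferential one.

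The first step is to record the structure of the proof of Corollary~6.8 in \cite{san-focus}: (i) Eliasson's classical normal form puts $(J,H)$ into the model $(q_1,q_2)$ of \eqref{equ:cartan} near $m$ --- for the spin--oscillator this is exactly Lemma~\ref{lem:El}; (ii) a microlocally unitary quantum normal form conjugates $(\hat J,\hat H)$, near $m$, into functions of the model operators $\hat q_1,\hat q_2$ modulo $\mathcal{O}(\hbar^\infty)$; (iii) the spectrum of the model is explicit; (iv) the global quantization condition is obtained by propagating microlocal solutions along the pinched torus $\Lambda_0$ and matching them across the focus--focus vertex, the holonomy being governed by the action integrals $\tau_1,\tau_2$ and hence by the functions $\sigma_1,\sigma_2$ and the Taylor series $(S)^\infty$ of Section~\ref{taylor:sec}. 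Steps (i), (iii) and the geometric content of (iv) concern only the classical system and are therefore untouched; the crux is step (ii).

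For step (ii), the quantum normal form is built by an order-by-order resolution of cohomological equations in the symbol algebra; because the Toeplitz symbol algebra has the same leading structure (Poisson bracket, then lower-order corrections), the same recursion applies once one knows the relevant cohomology --- attached to the $S^1\times\R$ symmetry of the model $(q_1,q_2)$ --- vanishes. This vanishing can be imported from the pseudodifferential case via the microlocal isomorphism of \cite{BG} applied to the model, or re-derived inside \cite{charles-toeplitz}. Once the local model is in place, the matching at the vertex and the subprincipal contributions entering the Bohr--Sommerfeld formula transport without change, provided the Toeplitz quantization is normalized so that the subprincipal symbols of $\hat J$ and $\hat H$ vanish along $\Lambda_0$: for $\hat J$ this is automatic since it generates an exact $S^1$-action, and for $\hat H$ it can be checked by a direct computation in the spirit of Lemma~\ref{id0:lem}.

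The main obstacle is that \cite{BG} is a statement about \emph{homogeneous} microlocal algebras, whereas one needs a version uniform in the semiclassical parameter $\hbar$ and valid on an entire neighbourhood of the non-compact singular fiber $\Lambda_0$. One must therefore either upgrade the Boutet de Monvel--Guillemin intertwiner to a semiclassical Fourier integral operator defined near $\Lambda_0$ and compatible with the subprincipal normalization, or rebuild directly, inside the Toeplitz calculus, the three analytic ingredients of \cite{san-focus} --- the microlocal model near $m$, the propagation lemma along $\Lambda_0$, and the matching at the vertex. Either route is a genuine undertaking, and it is for this reason that we state the result as a conjecture.
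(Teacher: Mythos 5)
The statement you are addressing is stated in the paper as an open conjecture, not a theorem, and the paper offers for it only the one-line heuristic that the Toeplitz algebra is microlocally equivalent to the pseudodifferential algebra by the Boutet de Monvel--Guillemin theorem \cite{BG}. Your proposal follows exactly this route, and you are right not to present it as a proof: your final paragraph correctly isolates the genuine gap, namely that \cite{BG} is a statement about the homogeneous (large-eigenvalue) calculus and does not by itself supply a semiclassical intertwiner, uniform in $\hbar$, defined on a whole neighbourhood of the non-compact singular fiber $\Lambda_0$ and compatible with the subprincipal normalization needed for Bohr--Sommerfeld rules. Your additional observations --- that $\hat J,\hat H$ are in fact hybrid (pseudodifferential on the $\R^2$ factor, Toeplitz on the $S^2$ factor), that steps (i), (iii), (iv) of V\~u Ng\d oc's argument are purely classical or geometric and would transport unchanged, and that the real work is in the quantum normal form of step (ii) --- go beyond what the paper spells out and give a reasonable roadmap for anyone attempting to settle the conjecture. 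In short: same approach as the paper's motivation, with a more detailed and accurate accounting of what is missing; since no proof is known, stating it as a conjecture, as both you and the paper do, is the correct conclusion.
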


 This conjecture may be stated in the following way. Let $\Sigma(n)$
 be the spectrum of $\hat{H}|_{\op{ker}(\hat{J}-\op{Id})}$.  For
 bounded $t\in\R$, the formula
$$
\tilde{\lambda}(t) -\tilde{\epsilon}(t)
\op{ln}(2\hbar)-2\op{arg}\Gamma \Big( \frac{\ii
  \tilde{\epsilon}(t)+1+j}{2} \Big) \, \in 2\pi \mathbb{Z}
+\mathcal{O}(\hbar^{\infty})
$$
holds if and only if $\hbar t \in
\Sigma(n)+\mathcal{O}(\hbar^{\infty})$ with
\begin{itemize}
\item[(a)] $\tilde{\lambda}(t)=\tilde{\lambda}(t;\hbar)$ admits an
  asymptotic expansion on integer $\ge -1$ powers of $\hbar$ with
  smooth (=$\op{C}^{\infty}$) coefficients in $t$ starting with
  $\tilde{\lambda}(t)=\frac{1}{\hbar} \int_{\gamma_0} \alpha_0
  +\op{I}_{\gamma_0}(\tilde{\kappa}(t))+\mu
  \frac{\pi}{2}+\mathcal{O}(\hbar).  $

\item[(b)] $\tilde{\epsilon}(t)=\tilde{\epsilon}(t;\hbar)$ has an
  asymptotic expansion on integer $\ge 0$ powers of $\hbar$ with
  smooth coefficients in $t$ starting with the second component of the
  vector $B (0,\, t) + \mathcal{O}(\hbar) $ where $B$ is the $2\times
  2$ matrix such that $B (J'',\, H'')_m=(q_1,\, q_2)$.

\item[(c)] $I_{\gamma_0}(\tilde{\kappa}(t))$ is what is called the
  ``principal value integral'' of $\tilde{\kappa}(t)$, where
  $\tilde{\kappa}(t)$ is the $1$\--form on $\Lambda_0$ defined by
  \begin{eqnarray} \label{for:tildekappa}
    (\tilde{\kappa}(t)(\mathcal{X}_J),\,
    \tilde{\kappa}(t)(\mathcal{X}_H))=(0,\,t) \iff 
    (\tilde{\kappa}(t)(\mathcal{X}_{q_1}), \,
    \tilde{\kappa}(t)\mathcal{X}_{q_2}))=B (0,\, t)
  \end{eqnarray}
  Finally, $I_{\gamma_0}(\tilde{\kappa}^t)$ is defined in Proposition
  6.15 of \cite{san-focus} as
$$
I_{\gamma_0}(\tilde{\kappa}(t))=\lim_{(s_1,\,s_2) \to (0,\,0)} \Big(
\int_{A_0=\gamma_0(s_1)}^{B_0=\gamma_0(1-s_2)} \tilde{\kappa}(t)
+\epsilon(t) \op{ln}(r_{A_0}\rho_{B_0}) \Big)
$$
where $\epsilon(t)$ is the first order term of $\tilde{\epsilon}(t)$.
\end{itemize}

For a semitoric system, the matrix $B$ is of the form
$B=\begin{pmatrix}
  1 & 0\\
  B_{21} & B_{22}
\end{pmatrix}$, with $B_{22}\neq 0$. Thus we get
\[
\epsilon(t)=B_{22}t.
\]

Moreover, because of formula (\ref{for:tildekappa}),
\[
(\tilde{\kappa}(t)(\mathcal{X}_{q_1}), \,
\tilde{\kappa}(t)(\mathcal{X}_{q_2}))=(0,\, B_{22}t).
\]
Therefore we see that
$\deriv{\tilde{\kappa}(t)}{t}=B_{22}\kappa_{2,0}$, where
$\kappa_{2,0}$ is the restriction to $\Lambda_0$ of the 1-form defined
in equation~\eqref{for:kappa2}. Thus, in view of
equation~\eqref{equ:formula2}, we get an explicit formula for the
symplectic invariant $a_2$~:
\begin{eqnarray} \label{for:a1} a_2 =
  \frac{1}{B_{22}}\frac{\partial}{\partial t}\Big(
  I_{\gamma_0}(\tilde{\kappa}^t) \Big)\upharpoonright_{t=0}.
\end{eqnarray}

Though we haven't worked it out here, a similar formula for the first
invariant $a_1$ could be obtained along the same lines.
 
In the case of the coupled spin\--oscillator, $B=\begin{pmatrix}
  1 & 0\\
  0 & 2
\end{pmatrix}$, so $B_{22}=2$ and $a_2 =
\frac{1}{2}\frac{\partial}{\partial t}(
I_{\gamma_0}(\tilde{\kappa}^t))\upharpoonright_{t=0}$.

\subsection{Obtaining $a_2$ from the spectrum $\Sigma(n)$}

We show in this paragraph how the conjecture gives a way to obtain
$a_2$. Using formula~\eqref{for:a1} above, an easy corollary of the
conjecture is Theorem 7.6 in \cite{san-focus}, which says that
\begin{equation}
  \min\Big(\frac{E_{k+1}-E_k}{\hbar} \Big) =\frac{2\pi/B_{22}}{\vert
    \op{ln}\hbar \vert+a_2 +\op{ln}2+\gamma}+\mathcal{O}(\hbar)
  \label{eq:spacing}
\end{equation}
for $\Sigma(n)=\{E_0 \le E_1 \le \ldots \le E_n\}$.  Here $\gamma$ is
Euler's constant. 

From the spectrum we can calculate $
t^{\min}(\hbar)=\min\Big(\frac{E_{k+1}-E_k}{\hbar} \Big) $ so
$$
\frac{2\pi}{t^{\min}}=B_{22}(\vert \op{ln}\hbar \vert
+a_2+\op{ln}2+\gamma)(1+\mathcal{O}(\hbar)) =B_{22}(\vert \op{ln}\hbar
\vert+a_2+\op{ln}2+\gamma)+\mathcal{O}(\hbar \op{ln}\hbar).
$$
Therefore we may recover $B_{22}$ as
\begin{equation}
  B_{22} = \lim_{\hbar\to 0}
  \left(\frac{2\pi}{t^{\min}\abs{\ln\hbar}}\right).
  \label{eq:B22}
\end{equation}
Because the convergence of this limit is very slow (of order
${\abs{\ln\hbar}}^{-1}$), it is in practice much better to solve the
system obtained with two different values of $\hbar$, which gives~:
\begin{equation}
  \label{eq:B22-bis}
  B_{22} = \frac{\frac{2\pi}{t^{\min}(\hbar_1)} -
      \frac{2\pi}{t^{\min}(\hbar_2)}}{\ln(\hbar_2/\hbar_1)} +
    \mathcal{O}(\hbar_1\ln \hbar_1) + \mathcal{O}(\hbar_2\ln \hbar_2).
\end{equation}
Thus, if we choose $\hbar_2$ to be a fixed multiple of
$\hbar=\hbar_1$, we get a convergence speed of order
$\mathcal{O}(\hbar\ln \hbar)$, which is indeed much more reasonable.

Once $B_{22}$ is known, it is easy to recover $a_2$, again through
formula~\eqref{eq:spacing}~:
\begin{equation}
  \label{eq:compute-a1}
  a_2 = \lim_{\hbar\to 0}
  \left(\frac{2\pi}{B_{22}t^{\min}}-\abs{\ln\hbar} - \ln 2 - \gamma\right), 
\end{equation}
and the convergence rate is again of order $\mathcal{O}(\hbar\ln
\hbar)$.

\subsection{Numerical approximation of $a_2$ using Maple}

Using Proposition~\ref{matrix:prop}, we compute the spectrum
$\Sigma(n)$ of the Spin-Oscillator example for various values of
$n=2/\hbar -1$ by entering the matrix in the computer algebra system
'Maple' and ask for a numeric diagonalization. Then is it easy to
implement the formulas~\eqref{eq:B22-bis} and~\eqref{eq:compute-a1}.

From the general theory, the minimal eigenvalue spacing is obtained
--- at least in the limit $\hbar\to 0$, at the focus-focus critical
value $H=0$. This is confirmed from the numerics. In fact, using the
recursion formula for the characteristic polynomial $D_n(X)$ of the
matrix $\textup{M}_{B_{\lambda}}(\hat{H})$ (with $\ell_0=n$)~:
\[
D_n(X) = XD_{n-1}(X)-\beta_n^2 D_{n-2}(X),
\]
we prove by induction that $D_n(X)$ has the parity of $n+1$. In
particular, the spectrum is symmetric~: $\Sigma(n)=-\Sigma(n)$. When
$n$ is odd, $0$ is not an eigenvalue
($D_n(0)=(-1)^{(n-1)/2}\beta_1\beta_3\cdots \beta_n$), and hence the
smallest spacing is simply twice the smallest positive eigenvalue~:
\[
t^{\min}(\hbar) = 2E_{[\frac{n}{2}]+2}/\hbar \quad \text{ with }
\hbar=\frac{2}{n+1}.
\]
\begin{figure}[H]
  \centering
  \includegraphics[width=0.7\linewidth]{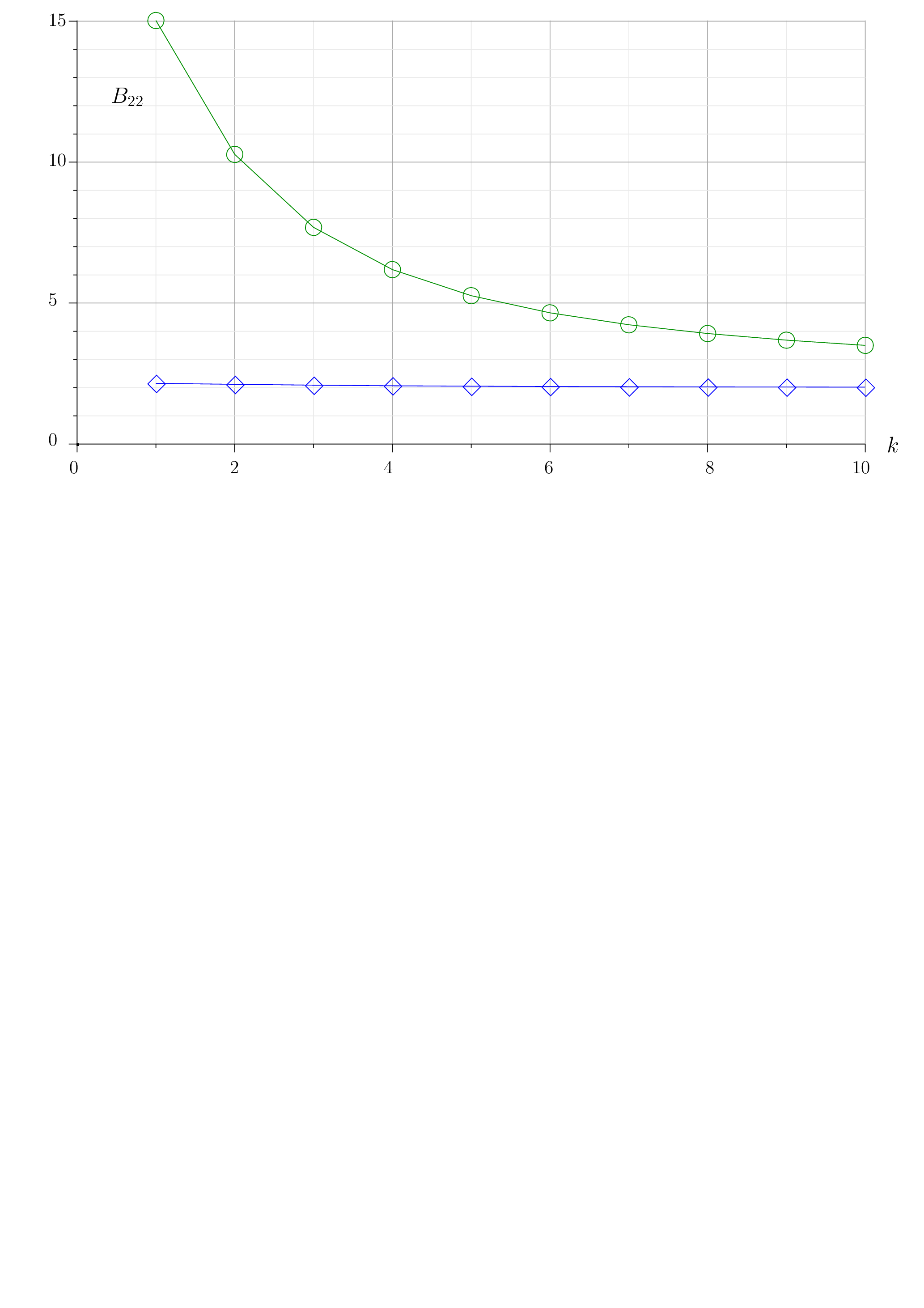}
  \caption{Recovering the coefficient $B_{22}$ (which is equal to 2 in
    our example). The horizontal scale is logarithmic: the integer
    abscissa $k$ corresponds to $n=2^k+1$. Thus $\hbar$ starts at $0.5$
    and decreases to the right to reach $1/513\simeq 0.002$.  The top
    curve --- with circles --- is the result of
    formula~\eqref{eq:B22}, which indeed converges very slowly. The
    curve with diamonds is obtained by the accelerated
    formula~\eqref{eq:B22-bis}.}
  \label{fig:b22}
\end{figure}
\begin{figure}[H]
  \centering
  \includegraphics[width=0.7\linewidth]{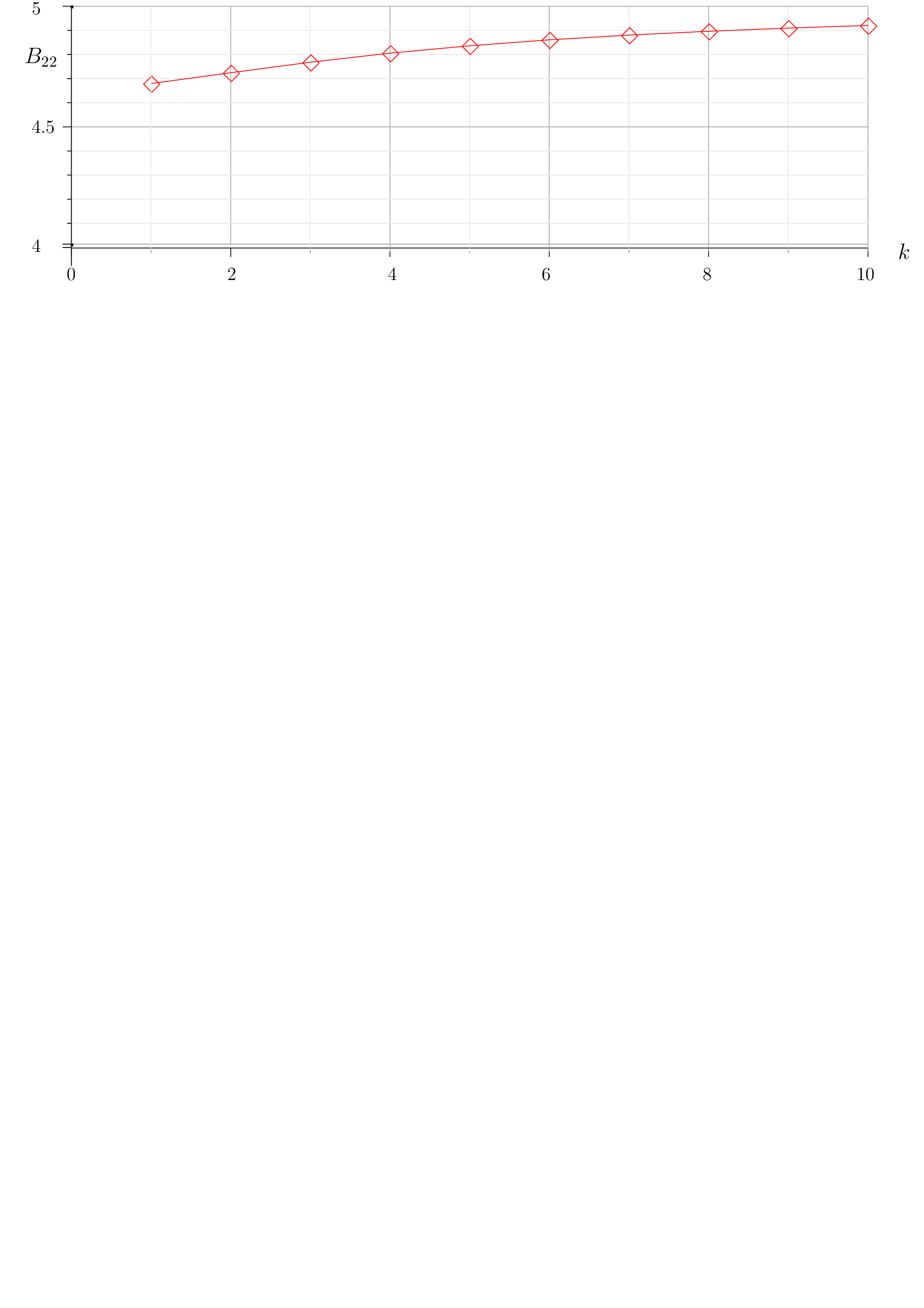}
  \caption{Recovering the invariant $a_2$. The graph plots the values
    of $a_2/\ln 2$ (which should be 5 in our example) computed using
    the formula~\eqref{eq:compute-a1}. The horizontal scale is the
    same is in figure~\ref{fig:b22}.}
  \label{fig:a1}
\end{figure}
The results of our numerical experiments are plotted in
figures~\ref{fig:b22} and~\ref{fig:a1}. They should be compared to the
theoretical values of Theorem~\ref{theo:a1}.

\noindent
\\
Alvaro Pelayo\\
University of California\---Berkeley \\
Mathematics Department \\
970 Evans Hall $\#$ 3840 \\
Berkeley, CA 94720-3840, USA.\\
{\em E\--mail}: {apelayo@math.berkeley.edu}

\bigskip\noindent

\noindent
V\~u Ng\d oc San\\
Institut de Recherches Math\'ematiques de Rennes\\
Universit\'e de Rennes 1\\
Campus de Beaulieu\\
35042 Rennes cedex (France)\\
{\em E-mail:} {san.vu-ngoc@univ-rennes1.fr}

\end{document}